\newcommand{\pup}[1]{\textup{(}{#1}\textup{)}}
\newcommand{\jz}{$(\vee,0)$}
\newcommand{\mup}{$(\wedge,1)$}
\newcommand{\jzs}{\jz-semi\-lat\-tice}
\newcommand{\mus}{\mup-semi\-lat\-tice}
\newcommand{\jzh}{\jz-ho\-mo\-mor\-phism}
\newcommand{\mh}{meet-ho\-mo\-mor\-phism}
\newcommand{\js}{join-sem\-i\-lat\-tice}
\newcommand{\jh}{join-ho\-mo\-mor\-phism}
\newcommand{\jirr}{join-ir\-re\-duc\-i\-ble}
\newcommand{\jirry}{join-ir\-re\-duc\-i\-bil\-i\-ty}
\newcommand{\mirr}{meet-ir\-re\-duc\-i\-ble}
\newcommand{\jsd}{join-sem\-i\-dis\-trib\-u\-tive}
\newcommand{\jsdy}{join-sem\-i\-dis\-trib\-u\-tiv\-i\-ty}
\newcommand{\bdl}{bounded distributive lattice}
\newcommand{\bl}{Boolean lattice}
\newcommand{\res}{\mathbin{\restriction}}
\newcommand{\contr}{a contradiction}
\DeclareMathOperator{\rK}{K}
\DeclareMathOperator{\rL}{L}
\newcommand{\two}{\mathbf{2}}
\newcommand{\ol}[1]{\overline{#1}}
\newcommand{\set}[1]{\left\{#1\right\}}
\newcommand{\setm}[2]{\set{{#1}\mid{#2}}}
\newcommand{\famm}[2]{\left({#1}\mid{#2}\right)}
\newcommand{\SDjn}[1]{\ensuremath{(\mathrm{SD}_{\vee}^{#1})}}
\newcommand{\Co}{\operatorname{\mathbf{Co}}}
\newcommand{\eps}{\varepsilon}
\newcommand{\dnw}{\mathbin{\downarrow}}
\newcommand{\ddnw}{\mathbin{\downdownarrows}}
\newcommand{\upw}{\mathbin{\uparrow}}
\newcommand{\JJd}{\bigvee\nolimits^{\uparrow}}
\DeclareMathOperator{\End}{End}
\newcommand{\bD}{\mathbin{\boldsymbol{D}}}
\newcommand{\bL}{\mathbin{\boldsymbol{L}}}
\newcommand{\bP}{\mathbin{\boldsymbol{P}}}
\newcommand{\cD}{\boldsymbol{\mathcal{D}}}
\newcommand{\cK}{\boldsymbol{\mathcal{K}}}
\newcommand{\cV}{\boldsymbol{\mathcal{V}}}
\DeclareMathOperator{\cov}{Cov}
\DeclareMathOperator{\icov}{iCov}
\DeclareMathOperator{\tcov}{tCov}
\DeclareMathOperator{\mcov}{mCov}
\newcommand{\cX}{\mathcal{X}}
\newcommand{\proj}[2]{{#1}_{(#2)}}
\newcommand{\fp}{\mathfrak{p}}
\newcommand{\fq}{\mathfrak{q}}
\newcommand{\fr}{\mathfrak{r}}
\newcommand{\PP}{\mathbb{P}}
\newcommand{\ZZ}{\mathbb{Z}}
\DeclareMathOperator{\Id}{Id}
\DeclareMathOperator{\Fil}{Fil}
\DeclareMathOperator{\col}{col}
\DeclareMathOperator{\con}{con}
\DeclareMathOperator{\J}{J}
\DeclareMathOperator{\Jc}{J_c}
\DeclareMathOperator{\At}{At}
\newcommand{\veec}{\mathbin{\vee_{\mathrm{c}}}}
\newcommand{\leref}{\leq_{\mathrm{ref}}}
\newcommand{\lsref}{<_{\mathrm{ref}}}
\newcommand{\es}{\varnothing}
\newcommand{\xp}{\mathbf{p}}
\newcommand{\xs}{\mathbf{s}}
\newcommand{\xt}{\mathbf{t}}
\newcommand{\vx}{\mathsf{x}}
\newcommand{\vy}{\mathsf{y}}
\newcommand{\vz}{\mathsf{z}}
\newcommand{\vt}{\mathsf{t}}
\numberwithin{equation}{section}
\theoremstyle{plain}
\newtheorem{theorem}{Theorem}[section]
\newtheorem{proposition}[theorem]{Proposition}
\newtheorem{corollary}[theorem]{Corollary}
\newtheorem{lemma}[theorem]{Lemma}
\newtheorem*{sclaim}{Claim}
\theoremstyle{definition}
\newtheorem{definition}[theorem]{Definition}
\newtheorem{notation}[theorem]{Notation}
\newtheorem{problem}{Problem}
\theoremstyle{remark}
\newcommand{\qedc}{{\qed}~{\rm Claim~{\theclaim}.}}
\newcommand{\qedsc}{{\qed}~{\rm Claim.}}
\newenvironment{scproof}
{\begin{proof}[Proof of Claim.]}
{\qedsc\renewcommand{\qed}{}\end{proof}}
\title{Varieties of lattices with geometric descriptions}
\author[L. Santocanale]{Luigi Santocanale}
\address{Laboratoire d'Informatique Fondamentale de Marseille\\
Universit\'e de Provence\\
39 rue F. Joliot Curie\\
13453 Marseille Cedex 13\\
France}
\email{luigi.santocanale@lif.univ-mrs.fr}
\urladdr{http://www.lif.univ-mrs.fr/\~{}lsantoca/}
\author[F. Wehrung]{Friedrich Wehrung}
\address{LMNO, CNRS UMR 6139\\
D\'epartement de Math\'ematiques\\
Universit\'e de Caen\\
14032 Caen Cedex\\
France}
\email{wehrung@math.unicaen.fr}
\urladdr{http://www.math.unicaen.fr/\~{}wehrung}
\subjclass[2010]{Primary 06B20. Secondary 06B23, 06B05, 06B15, 06B35, 06C05}
\keywords{Lattice; complete; algebraic; dually algebraic; ideal; filter; upper continuous; lower continuous; modular; \jsd; point; seed; spatial; strongly spatial; cover; irredundant cover; tight cover; minimal cover}
\date{\today}
\thanks{Both authors were partially supported by the PEPS project TRECOLOCOCO}
\begin{document}

\begin{abstract}
  A lattice~$L$ is \emph{spatial} if every element of~$L$ is a join of
  completely \jirr\ elements of~$L$ (\emph{points}), and
  \emph{strongly spatial} if it is spatial and the minimal coverings
  of completely \jirr\ elements are well-behaved. Herrmann, Pickering,
  and Roddy proved in 1994 that every \emph{modular} lattice can be
  embedded, within its variety, into an algebraic and spatial lattice.
  We extend this result to $n$-distributive lattices, for fixed~$n$.
  We deduce that the variety of all $n$-distributive lattices is
  generated by its finite members, thus it has a decidable word
  problem for free lattices.
  This solves two problems stated by Huhn in 1985.
  We prove that every modular
  (resp., $n$-distributive) lattice embeds within its variety into
  some \emph{strongly spatial} lattice. Every lattice which is either
  algebraic modular spatial or bi-algebraic is strongly spatial.
  
  We also construct a lattice that cannot be embedded, within its
  variety, into any algebraic and spatial lattice. This lattice has a
  least and a largest element, and it generates a locally finite
  variety of \jsd\ lattices.
\end{abstract}

\maketitle

\section{Introduction}\label{S:Intro}

An element~$p$ in a lattice~$L$ is \emph{completely \jirr}, or a
\emph{point}, if there is a largest element smaller than~$p$. We say
that~$L$ is \emph{spatial} if every element of~$L$ is a (possibly
infinite) join of points. In such a case, elements of~$L$ can be
identified with certain sets of points of~$L$. If, in addition, $L$ is
\emph{algebraic}, then we say that we have a \emph{geometric
  description} of~$L$. When dealing with equational properties of
lattices, the geometric description enables to prove representation
results that may have been very hard to obtain otherwise.

A prominent illustration of such methods is given in Herrmann,
Pickering, and Roddy~\cite{HPR}, where it is proved that every
\emph{modular} lattice~$L$ embeds into some algebraic and spatial
lattice~$\ol{L}$ that satisfies the same identities as~$L$---we say
that~$L$ embeds into~$\ol{L}$ \emph{within its variety}. In
particular, as~$L$ is modular, so is~$\ol{L}$. This is used
in~\cite{HPR} to prove  that \emph{a lattice~$L$ embeds into the subspace lattice of a vector space over an arbitrary field if{f}~$L$ is modular and $2$-distributive}.

Nevertheless it was not known whether \emph{every} lattice embeds,
within its variety, into some algebraic and spatial lattice. (This
question is stated in the comments following Semenova and
Wehrung~\cite[Problem~4]{CoP3}.) We find a counterexample to that
question in Theorem~\ref{T:NoVarEmb}. This counterexample is
\emph{\jsd}---in fact, it generates a variety of \jsd\ lattices.

Yet even for \jsd\ lattices, there are many situations where lattices
enjoy geometric descriptions. Such geometric descriptions are
massively used in Semenova and Wehrung~\cite{CoP1,CoP2,CoP3} or Semenova and Zamojska-Dzienio~\cite{SeZa1} for descriptions of
lattices of order-convex subsets of various kinds of posets. Denote by
$\Co(P)$ the lattice of all order-convex subsets of a poset~$P$, and
by~$\mathbf{SUB}$ the class of all lattices that can be embedded into
some~$\Co(P)$. It is proved in Semenova and Wehrung~\cite{CoP1}
that~$\mathbf{SUB}$ is a finitely based variety of lattices. It is
asked in Semenova and Wehrung~\cite[Problem~4]{CoP3} whether every
lattice in~$\mathbf{SUB}$ can be embedded, within its variety, into
some algebraic and spatial lattice. We prove in the present paper that
this is the case---we actually get a stronger result
(Theorem~\ref{T:NatRefndistr}): \emph{For every positive integer~$n$,
  every $n$-distributive lattice can be embedded, within its variety,
  into some algebraic and spatial lattice}. As all lattices of the
form~$\Co(P)$ are $2$-distributive, this solves the question above.
The lattices obtained in Theorem~\ref{T:NatRefndistr} are actually
\emph{strongly spatial} (cf. Definition~\ref{D:Seed}), which means
spatial plus the fact that minimal join-covers between points are well-behaved.
For \emph{modular} lattices, spatial implies
strongly spatial (cf.  Theorem~\ref{T:ModSp2StrSp}) and so the
distinction is immaterial.

A further consequence of Theorem~\ref{T:NatRefndistr} is that
\emph{the variety of all $n$-distributive lattices is generated by its
  finite members, thus it has a decidable word problem for free
  lattices} (Theorem~\ref{T:Finn-distr}). This solves two problems contained in Huhn \cite{Huhn85,Huhn88}.

We also prove that every lattice which is either well-founded or bi-algebraic is strongly spatial (Corollary~\ref{C:BiAlg2StrSp}). Hence our main counterexample (Theorem~\ref{T:NoVarEmb}) extends the result, proved by the second author in~\cite{DLLB}, that not every lattice can be embedded into some bi-algebraic lattice.

\section{Basic concepts}\label{S:Basic}

We set
 \begin{align*}
 Q\dnw X&:=\setm{q\in Q}{(\exists x\in X)(q\leq x)}\,,\\
 Q\upw X&:=\setm{q\in Q}{(\exists x\in X)(q\geq x)}\,,\\
 Q\ddnw X&:=\setm{q\in Q}{(\exists x\in X)(q<x)}\,,
 \end{align*}
for all subsets~$X$ and~$Q$ in a poset~$P$. We also set $Q\dnw a:=Q\dnw\set{a}$, $Q\upw a:=Q\upw\set{a}$, and $Q\ddnw a:=Q\ddnw\set{a}$, for each $a\in P$. A subset~$X$ of~$P$ is a \emph{lower subset} of~$P$ if $X=P\dnw X$.

A subset~$X$ of~$P$ \emph{refines} a subset~$Y$ of~$P$, in notation $X\leref Y$, if $X\subseteq P\dnw Y$. (As the present work touches upon algebraic and continuous lattices, where $x\ll y$ denotes the ``way-below'' relation, we shall stray away from the usual notation $X\ll Y$ for the refinement relation on subsets.) We shall also write $X\lsref Y$ for the conjunction of~$X\leref Y$ and~$X\neq Y$.

For elements~$x$ and~$y$ in a poset~$P$, let $x\prec y$ hold (in words, ``$x$ is a lower cover of~$y$'', or ``$y$ is an upper cover of~$x$'') if $x<y$ and there is no~$z\in P$ such that $x<z<y$. An element~$p$ in a \js~$L$ is
\begin{itemize}
\item[---] \emph{\jirr} if $p=\bigvee X$ implies that $p\in X$, for any finite subset~$X$ of~$L$ (in particular, taking $X:=\es$, this rules out~$p$ being the zero element of~$L$);
  
\item[---] \emph{completely \jirr}---from now on a \emph{point}, if
  the set of all elements smaller than~$p$ has a largest element, then
  denoted by~$p_*$. Of course, every point is \jirr;

\item[---] an \emph{atom} of~$L$ if~$L$ has a zero element and $0\prec p$.
\end{itemize}
We denote by~$\J(L)$ ($\Jc(L)$, $\At(L)$, respectively) the set of all \jirr\ elements (points, atoms, respectively) of~$L$. Trivially, $\At(L)\subseteq\Jc(L)\subseteq\J(L)$.

We say that a subset $\Sigma$ of~$L$ is \emph{join-dense} in~$L$ if every element of~$L$ is a join of elements of~$\Sigma$. Equivalently, for all $a,b\in L$ with $a\nleq b$, there exists $x\in\Sigma$ such that $x\leq a$ and $x\nleq b$. An element~$a$ in~$L$ is \emph{compact} (resp., \emph{countably compact}) if for every nonempty directed (resp., countable nonempty directed) subset~$D$ of~$L$ with a join, $a\leq\bigvee D$ implies that $a\in L\dnw D$. We say that~$L$ is
\begin{itemize}
\item[---] \emph{spatial} if the set of all points of~$L$ is join-dense in~$L$;

\item[---] \emph{atomistic} if the set of all atoms of~$L$ is join-dense in~$L$;

\item[---] \emph{compactly generated} if the set of all compact elements of~$L$ is join-dense in~$L$;

\item[---] \emph{algebraic} if it is complete and compactly generated;

\item[---] \emph{bi-algebraic} if it is both algebraic and dually algebraic. 
\end{itemize}
It is well known that every dually algebraic lattice is spatial---see Gierz \emph{et al.}~\cite[Theorem~I.4.25]{Comp} or Gorbunov~\cite[Lemma~1.3.2]{Gorb}.

A lattice~$L$ is \emph{upper continuous} if the equality $a\wedge\bigvee D=\bigvee(a\wedge D)$ (where $a\wedge D:=\setm{a\wedge x}{x\in D}$) holds for every $a\in L$ and every nonempty directed subset~$D$ of~$L$ with a join. Lower continuity is defined dually.

\begin{proposition}[folklore]\label{P:BasicCompGen}
Every compactly generated lattice~$L$ is upper continuous and every point of~$L$ is compact.
\end{proposition}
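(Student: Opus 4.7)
The plan is to deduce both halves from a single use of join-density of the compact elements, and I would handle the compactness of points first since it feeds into the continuity argument. Let $\Sigma$ denote the set of compact elements of~$L$, which is join-dense by hypothesis. For a point~$p$, write $p=\bigvee X$ for some $X\subseteq\Sigma$. I would then invoke the ``completely'' in ``completely \jirr'': if $p\notin X$, every $x\in X$ satisfies $x<p$, hence $x\leq p_{*}$, which gives the contradiction $p=\bigvee X\leq p_{*}<p$. So $p\in X\subseteq\Sigma$, i.e., $p$ is compact. (This uses only that a point is \jirr\ in the infinitary sense, which follows from the existence of~$p_{*}$.)

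For upper continuity, fix $a\in L$ and a nonempty directed $D\subseteq L$ for which $\bigvee D$ exists, and set $b:=a\wedge\bigvee D$. The plan is to show directly that $b$ is the least upper bound of $a\wedge D$, which simultaneously establishes the existence of $\bigvee(a\wedge D)$ and the desired equality. That $b$ is an upper bound of $a\wedge D$ is clear. For any other upper bound~$u$ of $a\wedge D$, I would use join-density to write $b=\bigvee Y$ with $Y\subseteq\Sigma$. Then each $c\in Y$ satisfies $c\leq b\leq\bigvee D$; since $c$ is compact and $D$ is directed, $c\leq d$ for some $d\in D$, and together with $c\leq a$ this yields $c\leq a\wedge d\leq u$. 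Hence $b=\bigvee Y\leq u$, as required.

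There is no real obstacle; the one small pitfall to watch is that the paper's definition of ``compactly generated'' does not presuppose that $L$ is complete, so I must phrase the continuity argument so as to \emph{deduce} the existence of $\bigvee(a\wedge D)$ from the existence of $\bigvee D$, rather than assume it. The argument above does this automatically by showing that $b$ itself plays the role of the supremum.
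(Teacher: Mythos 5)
Your proof is correct, and the interesting difference from the paper lies in how the two halves are linked. For upper continuity both arguments share the same kernel---take a compact $c\leq a\wedge\bigvee D$, use compactness to find $d\in D$ with $c\leq d$, and conclude $c\leq a\wedge d$---though your formulation, which exhibits $a\wedge\bigvee D$ directly as the least upper bound of $a\wedge D$, has the small merit of not presupposing that $\bigvee(a\wedge D)$ exists. The genuine divergence is in the second half: the paper deduces compactness of points \emph{from} the upper continuity just established (if $p\leq\bigvee D$ and $p\notin L\dnw D$, then $p\wedge x\leq p_*$ for all $x\in D$, whence $p=p\wedge\bigvee D\leq p_*$), whereas you argue directly from join-density via the infinitary \jirry\ of points: $p=\bigvee X$ with $X\subseteq\Sigma$ and $p\notin X$ would force every $x\in X$ below~$p_*$, hence $p\leq p_*$, \contr. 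Your route makes the two halves independent and in fact proves the slightly sharper statement that a point belongs to \emph{every} join-dense subset of~$L$ (here the set of compact elements); the paper's route is the one that survives when one only knows~$L$ to be upper continuous rather than compactly generated. Both arguments are complete, and your caution about not assuming the existence of $\bigvee(a\wedge D)$ is well placed.
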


\begin{proof}
Let $a\in L$ and let~$D$ be a nonempty directed subset of~$L$ with a join, we must prove that $a\wedge\bigvee D\leq\bigvee(a\wedge D)$. Let $c\in L$ compact with $c\leq a\wedge\bigvee D$. As~$c\leq\bigvee D$ and~$c$ is compact, there exists $d\in D$ such that $c\leq d$. It follows that $c\leq a\wedge d\leq\bigvee(a\wedge D)$. As~$L$ is compactly generated, the upper continuity of~$L$ follows.

Now let~$p$ be a point of~$L$ and let~$D$ be a directed subset of~$L$ with a join such that $p\leq\bigvee D$. If $p\notin L\dnw D$, then $p\wedge x\leq p_*$ for each $x\in D$, thus, by using the upper continuity of~$L$, we get $p=p\wedge\bigvee D\leq p_*$, \contr.
\end{proof}

Let~$L$ be a \jzs. We denote by~$\Sigma^\vee$ the set of all finite joins of elements of~$\Sigma$ ($0$ included), for each subset $\Sigma\subseteq L$. Furthermore, we denote by~$\Id L$ the ideal lattice of~$L$. Dually, for a \mus~$L$, we denote by~$\Fil L$ the lattice of all filters (i.e., dual ideals) of~$L$, partially ordered under reverse inclusion.

A lattice is
\begin{itemize}
\item \emph{$n$-distributive} (where $n$ is a positive integer) if it satisfies the identity
 \begin{equation}\label{Eq:ndistr}
 \vx\wedge\bigvee_{0\leq i\leq n}\vy_i=
 \bigvee_{0\leq i\leq n}
 \biggl(\vx_i\wedge\bigvee_{0\leq j\leq n,\ j\neq i}\vy_j\biggr)\,,
 \end{equation}

\item \emph{\jsd} if it satisfies the quasi-identity
 \[
 \vx\vee\vy=\vx\vee\vz\ \Longrightarrow\ \vx\vee\vy=\vx\vee(\vy\wedge\vz)\,.
 \]
\end{itemize}

We define a sequence $\famm{\xp_n}{n<\omega}$ of ternary lattice terms by
 \begin{align*}
 \xp_0(\vx,\vy,\vz)&:=\vy\,,\\
 \xp_{n+1}(\vx,\vy,\vz)&:=\vy\wedge(\vx\vee\xp_n(\vx,\vz,\vy))\,,
 &&\text{for all }n<\omega\,. 
 \end{align*}
Observe that the lattice inclusions
 $\vy\wedge\vz\leq\xp_n(\vx,\vy,\vz)\leq\vy$, for $n<\omega$, are
 valid in all lattices. Denote by \SDjn{n} the lattice identity
 $\xp_n(\vx,\vy,\vz)\leq\vx\vee(\vy\wedge\vz)$. In particular,
 distributivity is equivalent to \SDjn{1}, and \SDjn{n} implies
 \SDjn{n+1}. If a lattice satisfies \SDjn{n} for some~$n$, then it is
 \jsd; the converse holds for finite lattices, see J\'onsson and Rival
 \cite{JoRi79} or Jipsen and Rose~\cite[Section~4.2]{JiRo}.

We shall denote by $\con_L(x,y)$ the least congruence of a lattice~$L$ that identifies elements~$x,y\in L$. For a congruence~$\theta$ of~$L$, we shall write $x\equiv_\theta y$ instead of $(x,y)\in\theta$ and $x\leq_\theta y$ instead of $(x\vee y,y)\in\theta$.

We shall denote by~$\two=\set{0,1}$ the two-element lattice and by~$\omega$ the chain of all non-negative integers. For any poset~Ê$P$, we shall set $P^-:=P\setminus\set{0}$ if~$P$ has a least element, and $P^-:=P$ otherwise.

\section{Seeds, algebraic lattices, (strongly) spatial lattices}\label{S:seed}

\begin{definition}\label{D:MCRP}
For an element~$p$ and a finite subset~$E$ in a \js~$L$, we say that
\begin{itemize}
\item $E$ \emph{covers} (resp., \emph{joins to}) $p$ if $p\leq\bigvee E$ (resp., $p=\bigvee E$). We set
 \begin{align*}
 \cov(p)&:=\setm{E\subseteq L\text{ finite}}{E\text{ covers }p}\,,\\
 \cov_=(p)&:=\setm{E\subseteq L\text{ finite}}{E\text{ joins to }p}\,.
 \end{align*}
The elements of $\cov(p)$ are called the \emph{join-covers of~$p$}, while the elements of $\cov_=(p)$ are called the \emph{join-representations of~$p$}.

\item $E$ \emph{covers} (resp., \emph{joins to}) $p$ \emph{irredundantly} if~$E$ covers~$p$ (resp., joins to~$p$) and $p\nleq\bigvee(E\setminus\set{u})$ for each $u\in E$. Observe that both conditions imply that $E\subseteq L^-$. We set
 \begin{align*}
 \icov(p)&:=\setm{E\subseteq L^-\text{ finite}}{E\text{ covers }p
 \text{ irredundantly}}\,,\\
 \icov_=(p)&:=\setm{E\subseteq L^-\text{ finite}}{E\text{ joins to }p
 \text{ irredundantly}}\\
& \hspace{10mm}=\icov(p)\cap\cov_=(p)\,.
 \end{align*}
The elements of $\icov(p)$ are called the \emph{irredundant join-covers of~$p$}, while the elements of $\icov_=(p)$ are called the \emph{irredundant join-representations of~$p$}.

\item $E$ \emph{covers} (resp., \emph{joins to}) $p$ \emph{tightly} if $E\subseteq L^-$, $E$ covers~$p$ (resp., joins to~$p$), and $p\nleq x\vee\bigvee(E\setminus\set{u})$ for each $u\in E$ and each $x<u$. We set
 \begin{align*}
 \tcov(p)&:=\setm{E\subseteq L^-\text{ finite}}{E\text{ covers }p
 \text{ tightly}}\,,\\
 \tcov_=(p)&:=\setm{E\subseteq L^-\text{ finite}}{E\text{ joins to }p
 \text{ tightly}}\\
& \hspace{10mm}=\tcov(p)\cap\cov_=(p)\,. 
 \end{align*}
The elements of $\tcov(p)$ are called the \emph{tight join-covers of~$p$}, while the elements of $\tcov_=(p)$ are called the \emph{tight join-representations of~$p$}.

\item $E$ \emph{covers} (resp., \emph{joins to}) $p$ \emph{minimally} if $E\subseteq L$, $E$ covers~$p$ (resp., joins to~$p$), and $p\leq\bigvee X$ and $X\leref E$ implies that $E\subseteq X$ for each finite subset~$X$ of~$L$. Observe that both conditions imply that $E\subseteq L^-$. We set
 \begin{align*}
 \mcov(p)&:=\setm{E\subseteq L^-\text{ finite}}{E\text{ covers }p
 \text{ minimally}}\,,\\
 \mcov_=(p)&:=\setm{E\subseteq L^-\text{ finite}}{E\text{ joins to }p
 \text{ minimally}}\\
& \hspace{10mm}=\mcov(p)\cap\cov_=(p)\,.
 \end{align*}
The elements of $\mcov(p)$ are called the \emph{minimal join-covers of~$p$}, while the elements of $\mcov_=(p)$ are called the \emph{minimal join-representations of~$p$}.
\end{itemize}

Minimality comes in more than one way in defining~$\mcov(p)$:

\begin{lemma}[folklore]\label{L:MinCovDef}
The minimal join-covers of~$p$ are exactly the $\leref$-minimal elements of~$\icov(p)$.
\end{lemma}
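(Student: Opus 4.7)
The plan is to prove the two set inclusions separately, using in each direction only the definitional clauses of $\icov(p)$ and $\mcov(p)$, together with the observation $E\subseteq L^-$ built into both.

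For the forward direction, fix $E\in\mcov(p)$; since $E\subseteq L^-$ by Definition~\ref{D:MCRP}, it suffices to verify the irredundancy of $E$ and its $\leref$-minimality in $\icov(p)$. For irredundancy, suppose toward a contradiction that some $u\in E$ satisfies $p\leq\bigvee(E\setminus\set{u})$; setting $X:=E\setminus\set{u}$ gives a finite cover of~$p$ with $X\leref E$ (because $X\subseteq E$), so the defining clause of~$\mcov(p)$ yields $E\subseteq X$, contradicting $u\in E\setminus X$. For $\leref$-minimality, take any $F\in\icov(p)$ with $F\leref E$ and apply the same clause with $X:=F$ to obtain $E\subseteq F$; the irredundancy of $F$ then forces $E=F$, for otherwise any $u\in F\setminus E$ would give $p\leq\bigvee E\leq\bigvee(F\setminus\set{u})$.

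For the converse, assume that $E$ is $\leref$-minimal in $\icov(p)$ and let $X\subseteq L$ be a finite set with $p\leq\bigvee X$ and $X\leref E$; the goal is $E\subseteq X$. Pick $F\subseteq X$ that is inclusion-minimal subject to $p\leq\bigvee F$. Then $0\notin F$ (otherwise $F\setminus\set{0}$ would still cover~$p$), so $F\subseteq L^-$, and $F$ is irredundant by minimality of its cardinality, whence $F\in\icov(p)$. Since $F\subseteq X$ and $X\leref E$, transitivity of $\leref$ yields $F\leref E$, and the assumed $\leref$-minimality of $E$ in $\icov(p)$ forces $F=E$. In particular $E\subseteq X$, as required.

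The argument is entirely formal and free of serious obstacles; the only point to bear in mind is that the clause defining $\mcov(p)$ delivers the containment $E\subseteq X$ rather than just $E\leref X$, and it is precisely this stronger conclusion that powers both halves of the equivalence.
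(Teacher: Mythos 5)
Your proof is correct. Note that the paper itself offers no proof of this lemma---it is stated as folklore inside Definition~\ref{D:MCRP}---so there is nothing to compare against; your argument (irredundancy of a minimal cover via $X:=E\setminus\set{u}$, $\leref$-minimality via the containment $E\subseteq F$ combined with irredundancy of $F$, and the converse via an inclusion-minimal subcover $F\subseteq X$ forced to equal $E$) is exactly the routine verification the authors are leaving to the reader, and it handles the one delicate point correctly, namely that the defining clause of $\mcov(p)$ yields the set containment $E\subseteq X$ rather than merely $E\leref X$.
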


We define similarly irredundant covers for \emph{vectors}, by saying, for example, that a family $\famm{a_i}{i\in I}$ (where~$I$ is finite) covers~$p$ irredundantly if the \emph{set} $\setm{a_i}{i\in I}$ covers~$p$ irredundantly and the map $i\mapsto a_i$ is one-to-one. A similar definition applies to tight, resp. minimal, covers, resp. join-representations.
\end{definition}

Observe that we allow the possibility of \emph{trivial join-covers} $p\leq\bigvee X$, that is, those~$X$ such that $p\in L\dnw X$.

It is a straightforward exercise to verify that the containments
 \[
 \mcov(p)\subseteq\tcov(p)\subseteq\icov(p)\subseteq\cov(p)
 \]
hold, with none of the converse containments holding as a rule, with easy counterexamples for finite lattices. A similar comment applies to the containments $\mcov_=(p)\subseteq\tcov_=(p)\subseteq\icov_=(p)\subseteq\cov_=(p)$.

\begin{lemma}[folklore]\label{L:MJCjirr}
Let~$p$ be an element in a \js~$L$. Then every element~$E$ of~$\mcov(p)$ is contained in~$\J(L)$. Furthermore, if~$p$ is compact, then~$E$ is contained in~$\Jc(L)$.
\end{lemma}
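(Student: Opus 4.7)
My plan is to exploit the minimality of $E$ through a \emph{substitution argument}: in order to establish a property of some $u\in E$, I replace $u$ by a well-chosen finite set of elements below $u$, producing a refinement of $E$ to which the defining property of minimality applies.

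For the first assertion, I would fix $u\in E$ and suppose $u=\bigvee F$ for some finite $F\subseteq L$. Since $E\subseteq L^-$ rules out $u=0$, we have $F\neq\es$. Setting $X:=(E\setminus\set{u})\cup F$, every element of $F$ lies below $u\in E$, so $X\leref E$, and substitution gives $\bigvee X=\bigvee E\geq p$. Minimality of~$E$ forces $E\subseteq X$; since $u\notin E\setminus\set{u}$, this yields $u\in F$, showing~$u$ to be \jirr.

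For the second assertion, I would combine (i) with the compactness of~$p$. First, the \jirry\ of~$u$ implies that $v_1,v_2<u$ always satisfy $v_1\vee v_2<u$ (otherwise $u=v_1\vee v_2$ would force $u\in\set{v_1,v_2}$), so $V:=L\ddnw u$ is closed under finite joins and hence directed. Setting $a:=\bigvee(E\setminus\set{u})$, a variant of the substitution argument with $F:=\set{v}$ for $v\in V$ establishes $p\nleq a\vee v$, for otherwise the refinement $(E\setminus\set{u})\cup\set{v}$ of~$E$ would cover~$p$ without containing~$u$. Consequently, the directed set $D:=\setm{a\vee v}{v\in V}$ satisfies $p\nleq d$ for every $d\in D$.

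Assuming for contradiction that $V$ has no maximum, I observe that any upper bound of~$V$ strictly below~$u$ would itself lie in~$V$ and hence be a maximum, so no such element exists; one then checks that~$u$ is the least upper bound of~$V$ in~$L$, whence $a\vee u$ is the least upper bound of~$D$. Since $p\leq\bigvee E=a\vee u=\bigvee D$ and $D$ is directed and admits a join in~$L$, the compactness of~$p$ produces some $v\in V$ with $p\leq a\vee v$, contradicting the previous step. Therefore $V$ has a maximum and $u$ is a point. The delicate step is the identification $\bigvee D=a\vee u$, which rests on the join-irreducibility of~$u$; the degenerate case $V=\es$, corresponding to $u$ being minimal in~$L$, is handled by the standing conventions on~$L^-$.
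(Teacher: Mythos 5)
Your proof is correct and takes essentially the same route as the paper's: both assertions are obtained by replacing an element $u\in E$ with a finite set of elements below it and invoking the minimality of~$E$, and the second assertion is completed by applying the compactness of~$p$ to the directed set built from $L\ddnw u$. The differences are purely organizational---you fold the zero case and the binary-join case of \jirry\ into a single substitution, and you invoke minimality before compactness rather than after---while the degenerate case $L\ddnw u=\es$ that you flag at the end is passed over silently in the paper's proof as well (it asserts without comment that $L\ddnw u$ is nonempty with join~$u$).
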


\begin{proof}
If one of the elements of~$E$ is the zero of~$L$, the set
$X:=E\setminus\set{0}$ belongs to~$\cov(a)$ and refines~$E$, thus
contains~$E$, \contr; so all elements of~$E$ are nonzero. If $q\in E$
is not \jirr, then $q=x\vee y$ for some $x,y<q$, so the set
$X:=\set{x,y}\cup(E\setminus\set{q})$ refines~$E$ and belongs to~$\cov(a)$, and so $E\subseteq X$, and thus $q\in X$, \contr. Therefore, $E\subseteq\J(L)$.

Now assume that~$p$ is compact and that an element~$q$ of~$E$ is not a point. As~$q$ is \jirr, the set $D:=L\ddnw q$ is nonempty directed with join~$q$. Setting $b:=\bigvee(E\setminus\set{q})$, it follows that $p\leq\bigvee E=b\vee\bigvee D$, thus, as~$p$ is compact, there exists $x\in D$ such that $p\leq b\vee x$. It follows that the set $X:=\set{x}\cup(E\setminus\set{q})$ belongs to~$\cov(p)$, thus, as~$X$ refines~$E$, we get $E\subseteq X$, and thus $q\in X$, \contr.
\end{proof}

The following result originates in the proof of Nation~\cite[Theorem~3.2]{Nation90}.

\begin{lemma}\label{L:Irr2tight}
Let~$p$ be an element in a complete and lower continuous lattice~$L$. Then every element of~$\cov(p)$ can be refined to some element of~$\tcov(p)$.
\end{lemma}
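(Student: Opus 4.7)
The plan is to apply Zorn's lemma to the set $\mathcal{F}$ of all functions $f\colon E\to L$ satisfying $f(e)\leq e$ for every $e\in E$ and $p\leq\bigvee f(E)$, partially ordered componentwise. The inclusion $E\hookrightarrow L$ witnesses $\mathcal{F}\neq\es$. From a minimal element $f_0\in\mathcal{F}$ we shall extract a tight refinement of~$E$ by setting $F:=f_0(E)\setminus\set{0}$; then $F\subseteq L^-$, $F\leref E$, and $p\leq\bigvee f_0(E)=\bigvee F$, so all that remains is to deduce the tightness of~$F$ from the minimality of~$f_0$.

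The main obstacle is verifying that every chain $C\subseteq\mathcal{F}$ has a lower bound in~$\mathcal{F}$. Define $f^*(e):=\bigwedge_{f\in C}f(e)$ for $e\in E$; the inequality $f^*(e)\leq e$ is clear, and we must check that $p\leq\bigvee_{e\in E}f^*(e)$. Enumerating $E=\set{e_1,\dots,e_n}$ and applying the lower continuity identity $a\vee\bigwedge D=\bigwedge(a\vee D)$ (valid for nonempty filtered~$D$) once per coordinate, using that each set $\setm{f(e_i)}{f\in C}$ is a chain and hence filtered, we obtain
\[
\bigvee_{i=1}^n\bigwedge_{f\in C}f(e_i)=\bigwedge_{(f_1,\dots,f_n)\in C^n}\bigvee_{i=1}^n f_i(e_i)\,.
\]
Since $C$ is a chain, any finite tuple $(f_1,\dots,f_n)\in C^n$ has a least component~$g$, and then $\bigvee_if_i(e_i)\geq\bigvee_ig(e_i)=\bigvee g(E)\geq p$; hence the right-hand side is bounded below by~$p$, so $f^*\in\mathcal{F}$, and Zorn's lemma supplies the desired minimal $f_0$.

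For tightness, observe first that at the minimum $f_0$ each value $u\in F$ has a unique preimage in~$E$: otherwise, we could reset one redundant preimage to~$0$ without changing $\bigvee f_0(E)$, producing a strict predecessor of~$f_0$ in~$\mathcal{F}$. Now fix $u\in F$ and $x<u$, and let $e_0$ be the unique $e\in E$ with $f_0(e_0)=u$. The function $f'$ defined by $f'(e_0):=x$ and $f'(e):=f_0(e)$ for $e\neq e_0$ satisfies $f'<f_0$, so minimality of~$f_0$ forces $f'\notin\mathcal{F}$, that is,
\[
p\nleq\bigvee f'(E)=x\vee\bigvee_{e\neq e_0}f_0(e)=x\vee\bigvee(F\setminus\set{u})\,,
\]
where the final equality uses that $e_0$ is the unique preimage of~$u$ and that dropping~$0$ does not affect the join. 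This is the desired tightness condition.
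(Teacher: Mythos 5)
Your proof is correct and takes essentially the same route as the paper's: a Zorn's-lemma argument on the componentwise-ordered family of pointwise minorants of~$E$ that still cover~$p$, with completeness and lower continuity supplying lower bounds for chains. You merely spell out the chain-meet computation and the tightness verification that the paper leaves implicit.
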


\begin{proof}
Let $E=\set{p_1,\dots,p_m}\in\cov(p)$, with $m<\omega$, and set 
 \[
 \cX:=\Bigl\{(x_1,\dots,x_m)\in L^m\mid
 p\leq\bigvee_{i=1}^mx_i\text{ and }x_i\leq p_i\text{ for each }i\Bigr\}\,,
 \]
ordered componentwise. It follows from the completeness and lower continuity of~$L$ that every nonempty chain of~$\cX$ has a meet, which also belongs to~$\cX$. By Zorn's Lemma, $\cX$ has a minimal element, say $(q_1,\dots,q_m)$. Then $F:=\set{q_1,\dots,q_m}\setminus\set{0}$ refines~$E$, and it belongs to~$\tcov(p)$.
\end{proof}

In case~$p$ is a nonzero element in an infinite complete atomless Boolean lattice~$L$, then $\mcov(p)$ is empty. In particular, $\tcov(p)$ cannot be replaced by~$\mcov(p)$ in the statement of Lemma~\ref{L:Irr2tight}. However, searching in which particular cases this can be done leads to fascinating problems. Partial (and nontrivial) positive answers are introduced in Theorem~\ref{T:BiAlg2StrSp} and Lemma~\ref{L:NatRefndistr}.

\emph{Join-seeds} were introduced in Semenova and Wehrung~\cite{CoP3}, in a context where all lattices were $2$-distributive. The \emph{seeds} that we define here are related.

\begin{definition}\label{D:Seed}
A subset~$\Sigma$ in a \js\ $L$ is
 \begin{itemize}
 \item[---] a \emph{pre-seed} if for each $p\in\Sigma$ and each $X\in\cov(p)$, there exists $I\in\cov(p)$ contained in~$\Sigma$ such that $I\leref X$;

 \item[---] a \emph{quasi-seed} if it is a join-dense pre-seed contained in~$\J(L)$;
  
 \item[---] a \emph{seed} if it is join-dense, contained in~$\J(L)$, and for each $p\in\Sigma$ and each $X\in\cov(p)$, there exists $I\in\mcov(p)$ contained in~$\Sigma$ such that $I\leref X$. 
 \end{itemize}
We say that~$L$ is \emph{strongly spatial} if~$\Jc(L)$ is a seed in~$L$.
\end{definition}

As a seed is join-dense, every strongly spatial lattice is spatial. In the \emph{distributive} case, the two statements are equivalent: indeed, every algebraic, distributive, and spatial lattice is obviously strongly spatial (\jirr\ elements have no nontrivial join-covers). This is also the case for \emph{modular} lattices, but the proof is harder, see Theorem~\ref{T:ModSp2StrSp}. Another easy case of strong spatiality is provided by the following result.

\begin{proposition}\label{P:AlgAt2StrSp}
Every algebraic atomistic lattice is strongly spatial.
\end{proposition}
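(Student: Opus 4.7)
The plan is to unpack the definition of strong spatiality and verify the three conditions on $\Jc(L)$ one by one, the first two being immediate and the third requiring a compactness argument.

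First I would show that in any atomistic lattice, $\Jc(L)=\At(L)$. Indeed, if $p$ is a point with largest strict lower bound $p_*$, then atomisticity gives $p=\bigvee\setm{a\in\At(L)}{a\leq p}$; since $p\nleq p_*$, at least one such atom $a$ must fail $a\leq p_*$, forcing $a=p$ by maximality of $p_*$. The reverse containment is trivial. Consequently $\Jc(L)=\At(L)$ is join-dense (by atomisticity) and contained in $\J(L)$, settling the first two clauses in the definition of seed.

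The remaining task is: given $p\in\At(L)$ and $X\in\cov(p)$, produce $I\in\mcov(p)$ with $I\subseteq\At(L)$ and $I\leref X$. Since $L$ is atomistic, write each $x\in X$ as $x=\bigvee A_x$ with $A_x\subseteq\At(L)$; then $p\leq\bigvee X$ is the join of the directed family of finite subjoins of $\bigcup_{x\in X}A_x$. As $L$ is algebraic, Proposition~\ref{P:BasicCompGen} yields that the point $p$ is compact, so there is a finite $F\subseteq\bigcup_{x\in X}A_x$ with $p\leq\bigvee F$. By construction $F\subseteq\At(L)$ and $F\leref X$.

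To make $F$ minimal I would pick $F_0\subseteq F$ that is minimal, with respect to inclusion, among subsets of $F$ that cover $p$ — such a choice exists because $F$ is finite. Then $F_0\subseteq\At(L)$ and $F_0\in\icov(p)$. To check $F_0\in\mcov(p)$, let $Y$ be a finite subset of $L$ with $Y\in\cov(p)$ and $Y\leref F_0$. Each $y\in Y$ lies below some atom in $F_0$, hence $y\in\set{0}\cup F_0$, so $Y\setminus\set{0}\subseteq F_0$; and this subset still covers~$p$ since $\bigvee(Y\setminus\set{0})=\bigvee Y\geq p$. Irredundancy of $F_0$ within itself then forces $Y\setminus\set{0}=F_0$, i.e.\ $F_0\subseteq Y$. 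This is exactly the minimality condition of Definition~\ref{D:MCRP}, and $F_0\leref F\leref X$, finishing the verification.

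The only non-cosmetic step is the compactness reduction from the arbitrary finite join-cover $X$ to a finite set of atoms below it; everything else is bookkeeping. I do not foresee a serious obstacle, since the key tool — compactness of points in algebraic lattices — is already packaged in Proposition~\ref{P:BasicCompGen}.
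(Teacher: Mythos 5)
Your proof is correct and takes essentially the same route as the paper's: identify the points with the atoms, use compactness of the atom $p$ together with atomisticity to refine the given join-cover to a finite set of atoms, and then pass to an inclusion-minimal (irredundant) subcover, which is automatically a minimal join-cover because any refinement of a set of atoms is, up to the zero element, a subset of it. You merely make explicit this last verification, which the paper leaves implicit.
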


\begin{proof}
The points of an algebraic atomistic lattice~$L$ are exactly its atoms. Now if~$A$ is a finite cover of an atom~$p$ of~$L$, it follows from the compactness of~$p$ together with the fact that each element of~$A$ is a join of atoms that there exists a finite cover~$X$ of~$p$, consisting only of atoms, refining~$A$. Now every irredundant cover~$Y$ of~$p$ contained in~$X$ refines~$A$ and belongs to~$\mcov(p)$.
\end{proof}

It is easy to see that in the non-modular case, algebraic and spatial does not imply strongly spatial. For example, let $\omega^{\partial}:=\setm{n^*}{n<\omega}$ with $0^*>1^*>2^*>\cdots$. Then the lattice
 \[
 L:=\omega^{\partial}\cup\set{0,c}\,,
 \]
with the only new relations $0<c<0^*$ and $0<n^*$ for each $n<\omega$, is algebraic and spatial although not strongly spatial. This example is $2$-distributive, and not dually algebraic. The latter observation is also a consequence of the forthcoming Corollary~\ref{C:BiAlg2StrSp}. In order to prepare for the proof of that result, we shall establish a few lemmas with independent interest. {}From Lemma~\ref{L:SumRef} to Proposition~\ref{P:Interp} we shall fix an element~$p$ in a \js~$L$.

\begin{lemma}\label{L:SumRef}
  Let $A_0,A_1\in\tcov(p)$ such that $A_1\leref A_0$. Then for each
  $a\in A_0$, the set $A_1\dnw a$ joins to $a$ tightly. Moreover, each element
  of~$A_0$ contains an element of~$A_1$ and $\bigvee A_0=\bigvee A_1$.
\end{lemma}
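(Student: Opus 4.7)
The plan is to analyze how $A_1$ sits under $A_0$: for each $a \in A_0$, set $S_a := A_1 \dnw a$, so that the refinement $A_1 \leref A_0$ gives $A_1 = \bigcup_{a \in A_0} S_a$, a union that may well overlap. First I would establish the intermediate claim that $\bigvee S_a = a$ for every $a \in A_0$---from which the two ``moreover'' statements fall out at once---and then bootstrap this into the tightness of $S_a$ as a join-representation of $a$.

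For the intermediate claim, put $a' := \bigvee S_a \leq a$ for each $a \in A_0$. From $p \leq \bigvee A_1 = \bigvee_{a \in A_0} a'$ one extracts, for each $u \in A_0$, the inequality $p \leq u' \vee \bigvee(A_0 \setminus \set{u})$; were $u' < u$, this would contradict the tightness of $A_0$ at $u$ with witness $u'$, so $u' = u$ throughout. Consequently $\bigvee A_0 = \bigvee_{a \in A_0} \bigvee S_a = \bigvee A_1$, and since $a > 0$, the set $S_a$ is nonempty, so each $a \in A_0$ sits above some element of $A_1$.

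For the tightness itself, fix $a \in A_0$, $u \in S_a$, and $x < u$, and assume toward a contradiction that $a \leq x \vee \bigvee(S_a \setminus \set{u})$. Set $A_1' := \set{x} \cup (A_1 \setminus \set{u})$; the strategy is to verify that $c \leq \bigvee A_1'$ for every $c \in A_0$, so that $p \leq \bigvee A_0 \leq \bigvee A_1'$, contradicting the tightness of $A_1$ at $u, x$. The case $c = a$ is exactly the contradiction hypothesis, and for $c \neq a$ with $u \notin S_c$ one has $S_c \subseteq A_1 \setminus \set{u}$, hence $c = \bigvee S_c \leq \bigvee A_1'$. The hardest case---and the main obstacle---is $c \neq a$ with $u \in S_c$, where the element being discarded from $A_1$ is genuinely needed to reconstruct $c$. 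The trick is to bootstrap on the inequality $a \leq \bigvee A_1'$ just secured for $c = a$, which delivers $u \leq a \leq \bigvee A_1'$; combining this with $\bigvee(S_c \setminus \set{u}) \leq \bigvee A_1'$ through the identity $c = \bigvee S_c = u \vee \bigvee(S_c \setminus \set{u})$ yields $c \leq \bigvee A_1'$ and closes the argument.
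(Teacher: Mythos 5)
Your proof is correct and follows essentially the same route as the paper's: tightness of $A_0$ forces $\bigvee(A_1\dnw a)=a$ (giving the two ``moreover'' claims), and tightness of $A_1$ is then contradicted by replacing $u$ with $x$ inside a join bounding $p$. The paper reaches the final inequality $p\leq x\vee\bigvee(A_1\setminus\set{u})$ a bit more directly, by splitting $A_1$ into $A_1\dnw a$ and its complement $A_1\setminus(A_1\dnw a)$, which avoids your case analysis on whether $u\in S_c$ for $c\neq a$.
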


\begin{proof}
Set $\ol{a}:=\bigvee(A_1\dnw a)$, for each $a\in A_0$.
The assumption that~$A_1$ refines~$A_0$ means that $A_1=\bigcup\famm{A_1\dnw a}{a\in A_0}$, thus $p\leq\bigvee A_1=\bigvee\famm{\ol{a}}{a\in A_0}$. As $\ol{a}\leq a$ for each~$a$ and as $A_0\in\tcov(p)$, it follows that $\ol{a}=a$ for each $a\in A_0$. In particular, $a$ contains an element of~$A_1$.

Now let $a\in A_0$ and $b\in A_1\dnw a$, and let $y\leq b$ such that
$a=y\vee\bigvee((A_1\dnw a)\setminus\set{b})$. Set
$B:=A_1\setminus(A_1\dnw a)$. As
 \[
 p\leq\bigvee A_1=\bigvee(A_1\dnw a)\vee\bigvee B=a\vee\bigvee B
 =y\vee\bigvee\bigl((A_1\dnw a)\setminus\set{b})\cup B\bigr)
 \]
while $((A_1\dnw a)\setminus\set{b})\cup B$ is contained in $A_1\setminus\set{b}$, we obtain the inequality\linebreak $p\leq y\vee\bigvee(A_1\setminus\set{b})$, which, as~$A_1\in\tcov(p)$, implies that $y=b$. This proves that~$A_1\dnw a$ joins to~$a$ tightly. Furthermore,
 \begin{align}
 \bigvee A_0&=\bigvee\famm{\bigvee(A_1\dnw a)}{a\in A_0}
 &&(\text{by the paragraph above})\notag\\
 &=\bigvee\bigcup\famm{A_1\dnw a}{a\in A_0}\notag\\
 &=\bigvee A_1&&(\text{because }A_1\leref A_0)\,.\tag*{\qed}
 \end{align}
\renewcommand{\qed}{}
\end{proof}

We shall use later the following immediate consequence of Lemma~\ref{L:SumRef}.

\begin{corollary}\label{C:SumRef}
Let $A\in\tcov(p)$. If $A\subseteq\J(L)$, then $A\in\mcov(p)$.
\end{corollary}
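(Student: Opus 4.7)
The plan is to give a direct argument mirroring the proof of Lemma~\ref{L:SumRef}, but using the \jirry\ of the elements of~$A$ in place of tightness of the refining cover.

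Fix $A\in\tcov(p)$ with $A\subseteq\J(L)$. I must show $A\in\mcov(p)$, i.e., that for every finite $X\subseteq L$ with $X\in\cov(p)$ and $X\leref A$, we have $A\subseteq X$. First I would set, for each $a\in A$,
\[
\ol{a}:=\bigvee(X\dnw a)\leq a.
\]
Because $X\leref A$ we have $X=\bigcup_{a\in A}(X\dnw a)$, so
$p\leq\bigvee X=\bigvee_{a\in A}\ol{a}$.

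Now I would apply the tightness of~$A$. Fix $a\in A$ and set $y:=\ol{a}$. Since $\ol{a'}\leq a'$ for every $a'\in A$, the inequality above gives $p\leq y\vee\bigvee(A\setminus\set{a})$. The definition of $\tcov(p)$ forbids this when $y<a$ (and forbids $a=0$, so $a\in L^-$). Consequently $\ol{a}=a$ for every $a\in A$. In particular $X\dnw a$ is nonempty (otherwise $\ol{a}=0<a$, contradicting tightness), and since $X\dnw a$ is a finite subset of~$L$ whose join equals the \jirr\ element~$a$, we conclude $a\in X\dnw a\subseteq X$. Hence $A\subseteq X$, as required.

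I do not expect any serious obstacle: the proof is essentially a one-cover specialization of Lemma~\ref{L:SumRef}, where tightness of the \emph{refining} cover is replaced by the hypothesis $A\subseteq\J(L)$, which lets one conclude $a\in X$ from $a=\bigvee(X\dnw a)$ directly. The only minor subtlety worth checking is the degenerate case $X\dnw a=\es$, which is ruled out exactly as above by the tightness of~$A$ together with $a\in L^-$.
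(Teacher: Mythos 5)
Your proof is correct and is exactly the argument the paper has in mind: the corollary is stated there as an ``immediate consequence'' of Lemma~\ref{L:SumRef}, and what you write out is precisely the first part of that lemma's proof---which, as you correctly observe, uses tightness only of the refined cover~$A$ and not of the refining set~$X$---followed by the \jirry\ of the elements of~$A$ to pass from $a=\bigvee(X\dnw a)$ to $a\in X$. Your handling of the degenerate case $X\dnw a=\es$ is also consistent with the paper's conventions, so there is no gap.
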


\begin{lemma}\label{L:AnTreeStruct}
Let $A_0,A_1\in\tcov(p)$ such that $A_1\leref A_0$. Then the sets $A_1\dnw x$, for $x\in A_0$, are pairwise disjoint.
\end{lemma}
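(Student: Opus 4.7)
I would prove Lemma~\ref{L:AnTreeStruct} by contradiction, exploiting two tightness conditions: that of~$A_0$ at some element~$x$, and that of the refined join-representation $A_1\dnw x$ supplied by Lemma~\ref{L:SumRef}. The plan is to derive, from the hypothesis that a single $b\in A_1$ sits below two distinct elements of~$A_0$, first that $b$ is redundant in the representation $A_1\dnw x$ of~$x$, and then that this redundancy violates the tightness of that representation.

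Concretely, suppose there exist distinct $x,y\in A_0$ together with $b\in(A_1\dnw x)\cap(A_1\dnw y)$. Set $x':=\bigvee((A_1\dnw x)\setminus\set{b})$. By Lemma~\ref{L:SumRef}, $\bigvee(A_1\dnw x)=x$, so $x=b\vee x'$. Since $b\leq y$ and $y\in A_0\setminus\set{x}$, we absorb~$b$ into the second summand of
\[
 p\leq\bigvee A_0=x\vee\bigvee(A_0\setminus\set{x})
 =b\vee x'\vee\bigvee(A_0\setminus\set{x})
\]
to get $p\leq x'\vee\bigvee(A_0\setminus\set{x})$. Together with $x'\leq x$, the tightness of $A_0$ at~$x$ forces $x'=x$; otherwise we would have $x'<x$ and would obtain $p\leq x_0\vee\bigvee(A_0\setminus\set{x})$ for $x_0:=x'$, contradicting $A_0\in\tcov(p)$.

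Thus $\bigvee((A_1\dnw x)\setminus\set{b})=x=\bigvee(A_1\dnw x)$, so~$b$ is redundant in the cover~$A_1\dnw x$ of~$x$. But Lemma~\ref{L:SumRef} asserts that $A_1\dnw x$ joins to~$x$ \emph{tightly}: for every $u\in A_1\dnw x$ and every $y'<u$, $x\nleq y'\vee\bigvee((A_1\dnw x)\setminus\set{u})$. Taking $u:=b$ and any $b_0<b$ (for instance $b_0=0$, which exists because $b\in A_1\subseteq L^-$), we have $x\leq b_0\vee x=b_0\vee\bigvee((A_1\dnw x)\setminus\set{b})$, contradicting tightness. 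This contradiction establishes the disjointness of the sets $A_1\dnw x$ for $x\in A_0$.

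The main subtlety is harmlessly bookkeeping, not conceptual: the argument twice invokes tightness in the contrapositive form ``$p\leq z\vee\bigvee(E\setminus\set{u})$ with $z\leq u$ forces $z=u$'', which is exactly how Lemma~\ref{L:SumRef} is used in its own proof. I expect no real obstacle, because Lemma~\ref{L:SumRef} has already done the serious work of turning refinement plus tightness into a tight join-representation of each~$a\in A_0$ by $A_1\dnw a$; overlap of two such representations in a single element~$b$ then collapses both to the trivial absorption above.
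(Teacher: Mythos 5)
Your proof is correct and follows essentially the same route as the paper's: both arguments reduce to the single inequality $p\leq x'\vee\bigvee(A_0\setminus\set{b})$-free form $p\leq\bigvee((A_1\dnw x)\setminus\set{b})\vee\bigvee(A_0\setminus\set{x})$, obtained by using Lemma~\ref{L:SumRef} to identify each $y\in A_0$ with $\bigvee(A_1\dnw y)$ and then absorbing the doubly-counted element~$b$. The only (immaterial) difference is the order of the two tightness appeals: the paper first gets $x'<x$ from the tight representation $x=\bigvee(A_1\dnw x)$ and then contradicts the tightness of~$A_0$, whereas you first deduce $x'=x$ from the tightness of~$A_0$ and then contradict the tightness of the representation $x=\bigvee(A_1\dnw x)$.
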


\begin{proof}
Suppose that there are $x\in A_0$ and $z\in(A_1\dnw x)\cap B$ where
 \[
 B:=\bigcup\famm{A_1\dnw y}{y\in A_0\setminus\set{x}}\,.
 \]
 {}From $A_1\leref A_0$ it follows that $A_1=(A_1\dnw x)\cup B$.
 Furthermore, it follows from Lemma~\ref{L:SumRef} that the element
 $x':=\bigvee((A_1\dnw x)\setminus\set{z})$ (defined as a new zero
 element in case~$L$ has no zero and $A_1\dnw x=\set{z}$) is (strictly)
 smaller than~$x$ since, by Lemma~\ref{L:SumRef},
 $x=\bigvee A_1\dnw x$ tightly, while $\bigvee
 B=\bigvee(A_0\setminus\set{x})$. Now we compute
 \begin{align*}
 p&\leq\bigvee A_1\\
 &=\bigvee\bigl((A_1\dnw x)\cup B\bigr)\\
 &=\bigvee\bigl(((A_1\dnw x)\setminus\set{z})\cup B\bigr)
 &&(\text{because }z\in B)\\
 &=x'\vee\bigvee(A_0\setminus\set{x})\,,
 \end{align*}
which contradicts the assumption that~$A_0$ covers~$p$ tightly.
\end{proof}

\begin{lemma}\label{L:IncrCardCov}
Let $A_0,A_1\in\tcov(p)$ such that $A_1\leref A_0$. Then $|A_0\cap H|\leq|A_1\cap H|$ for each lower subset~$H$ of~$L$. In particular, $|A_0\dnw a|\leq|A_1\dnw a|$ for each $a\in L$.
\end{lemma}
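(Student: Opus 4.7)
The plan is to combine the two previous lemmas: Lemma~\ref{L:SumRef} tells us that for each $a\in A_0$ the set $A_1\dnw a$ is nonempty (in fact it joins to $a$), and Lemma~\ref{L:AnTreeStruct} tells us that the family $\famm{A_1\dnw a}{a\in A_0}$ is pairwise disjoint. Together these assert that we have a partition-style injection from $A_0$ into $A_1$: pick, for each $a\in A_0$, some $\varphi(a)\in A_1\dnw a$; the map $\varphi\colon A_0\to A_1$ is well-defined and one-to-one by disjointness.

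To prove the stated cardinality inequality, let $H$ be a lower subset of~$L$ and restrict attention to those $a\in A_0\cap H$. Since $H$ is a lower subset and $A_1\dnw a\subseteq L\dnw a\subseteq H$, we get $A_1\dnw a\subseteq A_1\cap H$ for each such $a$. The sets $A_1\dnw a$ remain pairwise disjoint and each is nonempty, so
\[
|A_0\cap H|\leq\sum_{a\in A_0\cap H}|A_1\dnw a|
=\Bigl|\bigcup_{a\in A_0\cap H}(A_1\dnw a)\Bigr|
\leq|A_1\cap H|\,.
\]
The ``In particular'' clause is then obtained by applying the result to the lower subset $H:=L\dnw a$, noting that $A_0\dnw a=A_0\cap(L\dnw a)$ and $A_1\dnw a=A_1\cap(L\dnw a)$.

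There is no real obstacle here; the entire content lies in combining the nonemptiness statement from Lemma~\ref{L:SumRef} with the disjointness statement from Lemma~\ref{L:AnTreeStruct}, and observing that a lower subset containing $a$ automatically contains all of $A_1\dnw a$. The only minor point to be careful about is the degenerate case where $L$ has no zero and some $A_1\dnw a$ would be the empty join in Lemma~\ref{L:SumRef}; but the lemma guarantees that each $a\in A_0$ actually contains some element of $A_1$, so $A_1\dnw a\neq\es$ in all cases.
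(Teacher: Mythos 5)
Your proof is correct and follows essentially the same route as the paper: Lemma~\ref{L:SumRef} gives nonemptiness of each $A_1\dnw u$ for $u\in A_0\cap H$, Lemma~\ref{L:AnTreeStruct} gives disjointness, and a choice function then yields a one-to-one map into $A_1\cap H$. Your cardinality-sum phrasing and the paper's explicit injection $f$ are the same argument in different clothing.
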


\begin{proof}
For each $u\in A_0\cap H$, it follows from Lemma~\ref{L:SumRef} that $A_1\dnw u$ is nonempty; pick an element~$f(u)$ there. It follows from Lemma~\ref{L:AnTreeStruct} that~$f$ is one-to-one. As the range of~$f$ is contained in $A_1\cap H$, the first conclusion follows. The second conclusion is a particular case of the first one, with $H:=L\dnw a$.
\end{proof}

Although we shall not use the following result in the proof of Theorem~\ref{T:BiAlg2StrSp}, we record it for its independent interest.

\begin{proposition}[Interpolation property for tight covers]\label{P:Interp}
Let $A_0,A_1,A_2\in\tcov(p)$ such that $A_2\leref A_1\leref A_0$ and let $(a_0,a_2)\in A_0\times A_2$. If $a_2\leq a_0$, then there exists $a_1\in A_1$ such that $a_2\leq a_1\leq a_0$.
\end{proposition}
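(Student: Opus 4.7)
The plan is to exploit the tree-like structure that Lemma~\ref{L:AnTreeStruct} imposes on successive tight refinements: once the fibers $A_1\dnw x$ are pairwise disjoint as~$x$ ranges over~$A_0$, every element of~$A_1$ lies below a \emph{unique} element of~$A_0$, and similarly at each subsequent level. Tracing~$a_2$ upwards through the unique parent in~$A_1$ and then in~$A_0$, and comparing the answer with~$a_0$ via the uniqueness statement applied directly to the refinement $A_2\leref A_0$, will yield the desired interpolating element.

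In more detail, I would proceed as follows. First, note that $\leref$ is transitive (an element below some~$b\in A_1$ is below some~$c\in A_0$ via the witness for~$b$), so from $A_2\leref A_1\leref A_0$ we get $A_2\leref A_0$. Hence Lemma~\ref{L:AnTreeStruct} applies to all three refinements $A_1\leref A_0$, $A_2\leref A_1$, and $A_2\leref A_0$, yielding respectively:
\begin{itemize}
\item[(a)] for each $y\in A_1$, there is a unique $x\in A_0$ with $y\leq x$;
\item[(b)] for each $z\in A_2$, there is a unique $y\in A_1$ with $z\leq y$;
\item[(c)] for each $z\in A_2$, there is a unique $x\in A_0$ with $z\leq x$.
\end{itemize}
Existence in (a)--(c) comes from the corresponding refinement relation.

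Now, starting from the given pair $(a_0,a_2)\in A_0\times A_2$ with $a_2\leq a_0$, use (b) to produce the unique $a_1\in A_1$ with $a_2\leq a_1$, and then use (a) to produce the unique $a_0'\in A_0$ with $a_1\leq a_0'$. Transitivity gives $a_2\leq a_0'$, and by the uniqueness part of (c) applied to $a_2$, one concludes $a_0'=a_0$. Therefore $a_2\leq a_1\leq a_0$, which completes the proof.

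There is no real obstacle here beyond lining up the three applications of Lemma~\ref{L:AnTreeStruct}; the transitivity of~$\leref$ is immediate, and once the uniqueness of ``parents'' is in place, the interpolation is essentially forced.
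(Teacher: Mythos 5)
Your proof is correct and follows essentially the same route as the paper's: choose $a_1\in A_1$ above $a_2$ and a parent of $a_1$ in $A_0$, then use the pairwise disjointness of the fibers $A_2\dnw x$, $x\in A_0$ (Lemma~\ref{L:AnTreeStruct} applied to $A_2\leref A_0$, obtained by transitivity of $\leref$), to conclude that this parent equals $a_0$. The extra uniqueness claims at stages (a) and (b) are harmless but not needed; only existence there and uniqueness at stage (c) are used.
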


\begin{proof}
As $a_2\in A_2$ and $A_2\leref A_1$, there exists $a_1\in A_1\upw a_2$. Likewise, as $a_1\in A_1$ and $A_1\leref A_0$, there exists $a\in A_0\upw a_1$. Now~$a_2$ belongs to both sets~$A_2\dnw a_0$ and~$A_2\dnw a$, thus, by Lemma~\ref{L:AnTreeStruct}, $a=a_0$; and thus $a_1\leq a_0$.
\end{proof}

\begin{theorem}\label{T:BiAlg2StrSp}
Let~$p$ be an element in a complete, lower continuous lattice~$L$. If either~$p$ is countably compact or~$L$ is either well-founded or dually well-founded, then every join-cover of~$p$ can be refined to some minimal join-cover of~$p$.
\end{theorem}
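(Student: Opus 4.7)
By Lemma~\ref{L:Irr2tight}, I may refine the given $E\in\cov(p)$ to a tight cover, so I assume $E\in\tcov(p)$. The plan is to apply Zorn's Lemma to $\cX:=\setm{A\in\tcov(p)}{A\leref E}$ ordered by $\leref$ to produce a $\leref$-minimal element $A^*$, and then to show $A^*\in\mcov(p)$ via Lemma~\ref{L:MinCovDef}.

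A preliminary observation cleans up the poset: every tight cover is an antichain. Indeed, if $b<a$ were both in some $A\in\tcov(p)$, then tightness at $u=b$ and $x=0<b$ (using $A\subseteq L^-$) would require $p\nleq\bigvee(A\setminus\set{b})$; but $a\geq b$ lies in $A\setminus\set{b}$, so $\bigvee(A\setminus\set{b})=\bigvee A\geq p$, a contradiction. It follows that on $\tcov(p)$ the preorder $\leref$ is a genuine partial order (two $\leref$-equivalent antichains must coincide), and that a $\leref$-minimal $A^*\in\cX$ is also $\leref$-minimal in $\icov(p)$: given $B\in\icov(p)$ with $B\leref A^*$, I refine $B$ to some tight $B'\in\cX$ via Lemma~\ref{L:Irr2tight}, and minimality of $A^*$ in $\cX$ forces $A^*\leref B'\leref B$. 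Lemma~\ref{L:MinCovDef} then yields $A^*\in\mcov(p)$.

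The substance of the proof lies in verifying the chain condition for Zorn: every $\leref$-descending chain $(A_\alpha)_\alpha$ in $\cX$ admits a lower bound in $\cX$. Lemmas~\ref{L:SumRef}, \ref{L:AnTreeStruct}, and \ref{L:IncrCardCov} equip such a chain with a tree structure in which $\bigvee A_\alpha$ is constant, each $A_\beta$ (for $\beta\geq\alpha$) partitions uniquely via the parent map into fibers $A_\beta\dnw a$ indexed by $a\in A_\alpha$, and cardinalities $|A_\alpha|$ are non-decreasing. A \emph{coherent branch} is a family $(b_\alpha)_\alpha$ with $b_\alpha\in A_\alpha$ and $b_\beta\leq b_\alpha$ for $\beta\geq\alpha$, and the natural candidate lower bound is $A_\infty:=\setm{\bigwedge_\alpha b_\alpha}{(b_\alpha)\text{ a coherent branch}}$, which is well-defined by completeness of $L$. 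Each of the three hypotheses now furnishes the bound by a different mechanism: if $L$ is well-founded, every branch is an eventually-constant descending chain, and a pigeonhole argument over the finite set $A_0$ via Lemma~\ref{L:IncrCardCov} forces the chain $(A_\alpha)$ itself to stabilize; if $L$ is dually well-founded, a dual argument applied to ascending sequences implicit in the tree's growth forces the same stabilization; if $p$ is countably compact, I restrict to a countable cofinal subchain indexed by $\omega$, apply K\"onig's lemma to the finitely-branching tree to extract branches, and invoke countable compactness of $p$ with lower continuity of $L$ to show $A_\infty$ is finite and tightly covers $p$. I expect the countably compact case to be the main obstacle, since without a chain condition on $L$ nothing constrains the tree's growth directly; ensuring that $A_\infty$ is a finite tight cover of $p$ will require a careful combination of K\"onig's lemma, countable compactness of $p$, and lower continuity of $L$ to control the limiting behavior of the branches.
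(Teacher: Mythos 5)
There is a genuine gap: the core of the theorem is never proved. Your opening reductions are fine --- refining to a tight cover via Lemma~\ref{L:Irr2tight}, observing that tight (and irredundant) covers are antichains so that $\leref$ is antisymmetric on them, and passing from $\leref$-minimality among tight covers to $\leref$-minimality in $\icov(p)$ so that Lemma~\ref{L:MinCovDef} applies. But everything after ``the substance of the proof lies in verifying the chain condition'' is a plan rather than an argument, and you concede as much (``I expect the countably compact case to be the main obstacle\dots will require a careful combination\dots''). That missing step \emph{is} the theorem. The plan also has structural defects. Zorn's Lemma demands a lower bound for \emph{every} chain, not merely for $\omega$-indexed descending sequences; a descending chain of uncountable coinitiality (say of order type $\omega_1$ reversed) has no countable coinitial subchain, so your reduction ``restrict to a countable cofinal subchain indexed by $\omega$'' is unavailable in general. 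The candidate $A_\infty$ is also suspect: an infinite finitely-branching tree whose levels grow in size can carry infinitely many infinite branches, so the set of branch infima need not be finite, and there is no visible reason why $p\leq\bigvee A_\infty$ should hold even when it is.

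The remedy is to drop Zorn in favour of well-foundedness, which is what the paper does: it suffices to show that $(\tcov(p),\leref)$ admits no strictly decreasing sequence $\famm{A_n}{n<\omega}$, since a well-founded poset has minimal elements below every element. Given such a sequence, one first extracts a subsequence along which every ``reducible'' element of $A_n$ genuinely splits in $A_{n+1}$; K\"onig's Theorem then produces a \emph{strictly} decreasing infinite branch $\famm{x_n}{n<\omega}$ (your ``coherent branches'' with $b_\beta\leq b_\alpha$ do not guarantee strictness, which is needed to contradict well-foundedness of~$L$ in that case). Setting $y_n:=\bigvee(A_n\setminus\set{x_n})$, Lemmas~\ref{L:SumRef} and~\ref{L:AnTreeStruct} show that $\famm{y_n}{n<\omega}$ is increasing; with $x:=\bigwedge_nx_n$ and $y:=\bigvee_ny_n$, lower continuity yields $p\leq x\vee y$, and then dual well-foundedness of~$L$ or countable compactness of~$p$ gives $p\leq x\vee y_m$ for some~$m$, contradicting the tightness of~$A_m$ because $x\leq x_{m+1}<x_m$. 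None of this limiting argument appears in your proposal, and it cannot be replaced by the assertion that ``$A_\infty$ is a finite tight cover.''
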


\begin{proof}
Let~$C\in\cov(p)$, we wish to refine~$C$ to an element of~$\mcov(p)$. By Lemmas~\ref{L:MinCovDef} and~\ref{L:Irr2tight}, it suffices to prove that~$\tcov(p)$, endowed with the refinement order, is well-founded. Suppose otherwise. There exists a sequence $\vec A=\famm{A_n}{n<\omega}$ from~$\tcov(p)$ such that the inequality $A_{n+1}\lsref A_n$ holds for each $n<\omega$.

Say that an element~$x\in L$ is \emph{$\vec A$-reducible} if there exists a natural number~$k$ such that $|A_k\dnw x|\geq 2$. By Lemma~\ref{L:IncrCardCov}, the sequence $\famm{|A_k\dnw x|}{k<\omega}$ is nondecreasing, thus the $\vec A$-reducibility of~$x$ is equivalent to saying that $|A_k\dnw x|\geq 2$ for all large enough $k<\omega$. Now we set
 \[
 B_n:=\bigl\{x\in A_n\mid x\text{ is }\vec A\text{-reducible}\bigr\}\,,\quad
 \text{for each }n<\omega\,.
 \]
Observe that by Lemma~\ref{L:SumRef},
 \begin{equation}\label{Eq:AmminusBm}
 A_n\dnw x=\set{x}\text{ for all }m\leq n<\omega\text{ and all }x\in A_m\setminus B_m\,.
 \end{equation}
In particular, if~$B_n=\es$, then $A_n\subseteq A_{n+1}$, thus $A_n=A_{n+1}$ as each of these sets covers~$p$ tightly, which contradicts the assumption that $A_{n+1}\lsref A_n$. Therefore, $B_n$ is nonempty.

Furthermore, for each $n<\omega$, there exists $k>n$ such that $|A_k\dnw x|\geq2$ for each $x\in B_n$. An easy inductive argument yields a strictly increasing sequence $\famm{n_i}{i<\omega}$ of natural numbers, with $n_0=0$, such that for all $i<\omega$ and all $x\in B_{n_i}$, the set $A_{n_{i+1}}\dnw x$ has at least two elements. Set $\vec A':=\famm{A_{n_i}}{i<\omega}$. As, by Lemma~\ref{L:IncrCardCov}, the notions of $\vec A$-reducibility and $\vec A'$-reducibility are equivalent, we may replace~$\vec A$ by~$\vec A'$ and thus assume that $n_i=i$ for each $i<\omega$. Hence
 \begin{equation}\label{Eq:Bnniceincr}
 \text{For all }m<n<\omega\text{ and for all }x\in B_m\,,\ |A_n\dnw x|\geq2\,.
 \end{equation}

\begin{sclaim}
$B_n\cap A_{n+1}=\es$ and $B_{n+1}$ refines~$B_n$, for each $n<\omega$.
\end{sclaim}

\begin{scproof}
Let $u\in B_n\cap A_{n+1}$. It follows from Lemma~\ref{L:SumRef} that $A_{n+1}\dnw u$ covers~$u$ tightly, but $u\in A_{n+1}$, thus $A_{n+1}\dnw u=\set{u}$, which contradicts the assumption that $u\in B_n$ together with~\eqref{Eq:Bnniceincr}. Hence $B_n\cap A_{n+1}=\es$.

Now let $v\in B_{n+1}$. There exists $u\in A_n$ such that $v\leq u$. If~$u\notin B_n$, then, by~\eqref{Eq:AmminusBm}, $A_{n+1}\dnw u=\set{u}$, thus $v=u$, \contr\ as~$u$ is $\vec{A}$-irreducible while~$v$ is $\vec{A}$-reducible; so $u\in B_n$.
\end{scproof}

Now we consider the set~$T$ of all finite sequences $x=(x_0,x_1,\dots,x_n)$, where $n<\omega$ (the \emph{length} of~$x$), $x_i\in B_i$ for each $i\leq n$, and $x_{i+1}\leq x_i$ for each $i<n$. By our Claim, it follows that $x_{i+1}<x_i$ for each $i<n$. Furthermore, for each positive integer~$n$, we may pick $x_n\in B_n$ (because~$B_n\neq\es$), then, using our Claim, $x_{n-1}\in B_{n-1}$ such that $x_n\leq x_{n-1}$, and so on by induction; we get a finite sequence $(x_0,x_1,\dots,x_n)\in T$. As this can be done for every~$n$, the set $T$ is infinite.

As each~$A_n$ is finite, every element of~$T$ has only finitely many upper covers for the initial segment ordering. By K\"onig's Theorem, $T$ has an infinite branch, say $\famm{x_n}{n<\omega}$. As this branch is a (strictly) decreasing sequence, $L$ is not well-founded; thus, by assumption, either~$L$ is dually well-founded or~$p$ is countably compact.

Set $y_n:=\bigvee(A_n\setminus\set{x_n})$ for each $n<\omega$. As
$A_n\in\tcov(p)$, we get
\begin{equation}\label{Eq:pleqxnyn}
  p\leq x_n\vee y_n\text{ and }(\forall z<x_n)(p\nleq z\vee y_n)\,. 
\end{equation}
Furthermore, let $u\in A_n\setminus\set{x_n}$ and let $v\in
A_{n+1}\dnw u$. If $v=x_{n+1}$, then, by Lemma~\ref{L:AnTreeStruct},
$x_n=u$, \contr; hence $v\in A_{n+1}\setminus\set{x_{n+1}}$. By
joining over all the possible~$u$-s and~$v$-s, we obtain, using Lemma~\ref{L:SumRef}, that
\begin{align*}
  y_n & =\bigvee(A_n\setminus\set{x_n})= \bigvee\bigcup\famm{A_{n+1}\dnw
    u}{u\in A_n\setminus\set{x_n}} \\
  & \leq\bigvee(A_{n+1}\setminus\set{x_{n+1}})=y_{n+1}\,.
\end{align*}
Now set $x:=\bigwedge\famm{x_n}{n<\omega}$ (directed meet) and $y:=\bigvee\famm{y_n}{n<\omega}$ (directed join). {}From~\eqref{Eq:pleqxnyn} it follows that $p\leq x_n\vee y$ for each $n<\omega$, thus, as~$L$ is lower continuous, $p\leq x\vee y$.

If~$L$ is dually well-founded, then there exists $m<\omega$ such that
$y_m=y$, so $p\leq x\vee y_m$. If~$L$ is not dually well-founded,
then, by assumption, $p$ is countably compact, thus, as $p\leq x\vee
y=\bigvee_{n<\omega}(x\vee y_n)$ (directed join), there exists
$m<\omega$ such that $p\leq x\vee y_m$. Hence the latter conclusion
holds in every case, which, as $x\leq x_{m+1}<x_m$,
contradicts~\eqref{Eq:pleqxnyn}.
\end{proof}

\begin{corollary}\label{C:BiAlg2StrSp}
Let~$L$ be a lattice. If~$L$ is either well-founded or bi-algebraic, then it is strongly spatial.
\end{corollary}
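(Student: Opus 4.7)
The plan is to verify that $\Jc(L)$ is a seed in $L$, splitting the argument along the two hypotheses. In both cases, I combine Theorem~\ref{T:BiAlg2StrSp} (to refine any join-cover of a point to a minimal one) with Lemma~\ref{L:MJCjirr} (to control which elements can appear in such a minimal join-cover).

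In the bi-algebraic case, dual algebraicity gives that $L$ is complete, lower continuous, and spatial (the folklore fact cited just after Proposition~\ref{P:BasicCompGen}), so $\Jc(L)$ is already join-dense. Algebraicity makes $L$ compactly generated, whence Proposition~\ref{P:BasicCompGen} forces every point of $L$ to be compact. Given $p\in\Jc(L)$ and $X\in\cov(p)$, the compactness of $p$ makes it countably compact, so Theorem~\ref{T:BiAlg2StrSp} refines $X$ to some $I\in\mcov(p)$, and the second part of Lemma~\ref{L:MJCjirr} (which uses exactly this compactness) upgrades $I\subseteq\J(L)$ to $I\subseteq\Jc(L)$. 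Hence $\Jc(L)$ is a seed, and $L$ is strongly spatial.

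In the well-founded case, I would first check that well-foundedness together with completeness yields lower continuity essentially for free: every filtered subset of a well-founded lattice has a minimum equal to its meet, so the defining identity is trivial. Spatiality follows by well-founded induction on $a\in L$: a minimal counterexample $a$ cannot be a point, nor a non-trivial finite join (by induction), nor a non-point join-irreducible, since in that case $a=\bigvee L\ddnw a$ would reduce $a$ to a directed join of inductively handled smaller elements. For $p\in\Jc(L)$ and $X\in\cov(p)$, Theorem~\ref{T:BiAlg2StrSp} in its well-founded form refines $X$ to some $I\in\mcov(p)$, and the first part of Lemma~\ref{L:MJCjirr} gives $I\subseteq\J(L)$.

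The final, and most delicate, step is to upgrade $I\subseteq\J(L)$ to $I\subseteq\Jc(L)$ without having compactness of $p$ at hand. If some $q\in I$ lay in $\J(L)\setminus\Jc(L)$, then $L\ddnw q$ would be directed with $\bigvee L\ddnw q=q$ and no maximum; writing $c:=\bigvee(I\setminus\set{q})$, the tightness of $I$ would forbid $p\leq z\vee c$ for any $z<q$, while $p\leq q\vee c=\bigvee_{d<q}(d\vee c)$. I would close the argument by exploiting that $p$ is a point (so $p\wedge(d\vee c)\leq p_*$ for each $d<q$) in combination with the well-foundedness of $L$, in the spirit of the K\"onig-tree construction from the proof of Theorem~\ref{T:BiAlg2StrSp}, to extract a strictly decreasing sequence in $L$ and contradict well-foundedness. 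This is the step I expect to cost the most work.
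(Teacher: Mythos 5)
Your bi-algebraic half is exactly the paper's proof: dual algebraicity gives completeness, lower continuity, and spatiality; Proposition~\ref{P:BasicCompGen} makes every point compact; Theorem~\ref{T:BiAlg2StrSp} refines an arbitrary join-cover of a point to a minimal one; and the second part of Lemma~\ref{L:MJCjirr} places that minimal cover inside $\Jc(L)$. Your preliminary remarks in the well-founded case (completeness and lower continuity come essentially for free, spatiality by well-founded induction) also agree with what the paper asserts.

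The problem is the step you defer to the end, and you have put your finger on a real issue: without compactness of~$p$, nothing in the quoted lemmas forces the minimal cover~$I$ into~$\Jc(L)$ rather than merely into~$\J(L)$ (points of a well-founded lattice need not be compact), and the paper's own one-sentence proof is silent on this point as well. However, the K\"onig-tree strategy you sketch cannot close the gap, because the claim it is meant to establish is false. Consider the lattice~$L$ with bottom~$0$, atoms~$p$ and~$c$, a chain of points $d_0<d_1<\cdots$ with supremum~$q$, the elements $e_i:=c\vee d_i$, and top $t:=q\vee c$, where $p$ lies below~$t$ only (so $p\vee c=t$ and $p\wedge e_i=0$), $c\wedge q=0$, and $q\wedge e_i=d_i$. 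This lattice is well-founded (a strictly decreasing sequence meets each of the two $\omega$-chains in a finite set), complete, lower continuous, and spatial; $q$ is \jirr\ but not a point; and $\set{q,c}$ is a minimal join-cover of the point~$p$. Any finite $I\leref\set{q,c}$ with $q\notin I$ satisfies $\bigvee I\leq e_N$ for some~$N$, hence does not cover~$p$; so every element of $\mcov(p)$ refining $\set{q,c}$ contains the non-point~$q$, and $\Jc(L)$ is not a seed. The ascending chain $\famm{e_i}{i<\omega}$ produces no descending sequence to contradict well-foundedness, which is why your plan stalls. So the deferred step is not merely laborious---it fails, and with it the well-founded half of the corollary as literally stated; only the bi-algebraic half of your argument (and of the paper's) is secure.
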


\begin{proof}
A direct inductive argument (within~$L$) easily shows that if~$L$ is well-founded, then~$L$ is spatial. This conclusion also holds in case~$L$ is bi-algebraic, because then~$L$ is dually algebraic. Furthermore, if~$L$ is bi-algebraic, then, by Proposition~\ref{P:BasicCompGen}, every point of~$L$ is compact. The conclusion then follows from Theorem~\ref{T:BiAlg2StrSp}.
\end{proof}

Not every algebraic strongly spatial lattice is bi-algebraic. For example, the lattice $\Co(\omega)$ of all order-convex subsets of the chain~$\omega$ of all natural numbers is algebraic and atomistic, thus strongly spatial (cf. Proposition~\ref{P:AlgAt2StrSp}). On the other hand, it is not dually algebraic---in fact, by Wehrung~\cite[Corollary~12.5]{DLLB}, $\Co(\omega)$ cannot be embedded into any bi-algebraic lattice.

\section{{}From quasi-seeds to algebraic spatial lattices}\label{S:Seed2Spat}

The following easy result relates quasi-seeds to spatial lattices.

\begin{proposition}\label{P:QuasiSeedSp}
Let~$L$ be a compactly generated lattice. Then~$L$ is spatial if{f}~$\Jc(L)$ is a quasi-seed in~$L$.
\end{proposition}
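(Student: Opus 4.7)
The plan is to verify each direction, with the forward direction being the substantive one. The proposition amounts to saying that, under compact generation, ``spatial'' already carries with it the covering refinement property needed to make $\Jc(L)$ a pre-seed.

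For the easy direction, assume $\Jc(L)$ is a quasi-seed. By definition of quasi-seed it is join-dense in $L$, so $L$ is spatial. There is nothing more to check.

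For the converse, assume $L$ is spatial. By definition $\Jc(L)$ is join-dense, and of course $\Jc(L)\subseteq\J(L)$. So the only thing to verify is the pre-seed condition: given $p\in\Jc(L)$ and a finite $X\in\cov(p)$, we must produce $I\in\cov(p)$ with $I\subseteq\Jc(L)$ and $I\leref X$. I would proceed as follows. Using spatiality, write each $x\in X$ as a join $x=\bigvee S_x$ with $S_x\subseteq\Jc(L)$. Let $\Sigma:=\bigcup_{x\in X}S_x\subseteq\Jc(L)$. Then
\[
 p\leq\bigvee X=\bigvee\Sigma=\JJd\setm{\bigvee F}{F\subseteq\Sigma\text{ finite}}\,,
\]
the latter being a directed join. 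Now invoke Proposition~\ref{P:BasicCompGen}: since $L$ is compactly generated, every point of $L$ is compact, so $p$ is compact. Compactness of $p$ yields a finite $I\subseteq\Sigma$ with $p\leq\bigvee I$. Each element of $I$ is a point lying below some $x\in X$, so $I\subseteq\Jc(L)$, $I\in\cov(p)$, and $I\leref X$, as required.

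I do not anticipate a real obstacle: the argument is essentially the standard ``compactness turns an infinite cover by points into a finite one'' trick, packaged to match the definition of pre-seed. The only point requiring care is to notice that the compactness of $p$ is not an extra hypothesis but a free consequence of $L$ being compactly generated, delivered by Proposition~\ref{P:BasicCompGen}; without this, the argument would break for points that happen not to be compact.
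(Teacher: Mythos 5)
Your proof is correct and follows essentially the same route as the paper: both directions are handled identically, with the converse using Proposition~\ref{P:BasicCompGen} to get compactness of the point~$p$ and then extracting a finite set of points below the elements of~$X$ that still covers~$p$. Your write-up merely makes the directed-join step behind the paper's ``we may assume each $x\in X$ is a finite join of points'' explicit.
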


\begin{proof}
If~$\Jc(L)$ is a quasi-seed in~$L$, then it is join-dense, thus~$L$ is spatial. Conversely, assume that~$L$ is spatial. Let $p\in\Jc(L)$ and let $X\in\cov(p)$, we must find $I\in\cov(p)$ contained in~$\Jc(L)$ such that $I\leref X$. As~$p$ is a point, it is compact (cf. Proposition~\ref{P:BasicCompGen}); as~$p\leq\bigvee X$ and each element of~$X$ is a join of points, we may assume that each element $x\in X$ is a finite join of points, so $x=\bigvee I_x$ where~$I_x$ is a finite subset of~$\Jc(L)$. It follows that the set $I:=\bigcup_{x\in X}I_x$ belongs to $\cov(p)$, is contained in~$\Jc(L)$, and refines~$X$.
\end{proof}

In the statement of the following lemma we make use of (covariant) \emph{Galois connections}, see for example Gierz \emph{et al.}~\cite{Comp}.

\begin{lemma}\label{L:VarSeed}
Let~$\Sigma$ be a subset in a \jzs~$L$. We define mappings $\eps\colon\Id\Sigma^\vee\to\Id L$ and $\pi\colon\Id L\to\Id\Sigma^\vee$ by the rules
 \begin{align}
 \eps(A)&:=L\dnw A\,,&&\text{for each }A\in\Id\Sigma^\vee\,,
 \label{Eq:DefnepsSig}\\
 \pi(B)&:=B\cap\Sigma^\vee\,,&&\text{for each }B\in\Id L\,.\label{Eq:DefnpiSig}
  \end{align}
Then the following statements hold:
\begin{enumerate}
\item The pair $(\eps,\pi)$ is a Galois connection between~$\Id\Sigma^\vee$ and $\Id L$.

\item $\pi\circ\eps$ is the identity on $\Id\Sigma^\vee$. In particular, $\pi$ is surjective.

\item The subset~$\Sigma$ is a pre-seed in~$L$ if{f}~$\pi$ is a lattice homomorphism.
\end{enumerate}  
\end{lemma}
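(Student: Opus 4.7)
The plan is to prove the three assertions in turn; (i) and (ii) are routine verifications while (iii) carries the content of the lemma.

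For (i), the maps $\eps$ and $\pi$ are well-defined because $\Sigma^\vee$ is closed under the finite joins of $L$, so joins computed in $\Sigma^\vee$ coincide with joins in $L$: this makes $\eps(A)=L\dnw A$ an ideal of $L$ (directedness inherited from directedness of $A$ in $\Sigma^\vee$) and $\pi(B)=B\cap\Sigma^\vee$ an ideal of $\Sigma^\vee$. The adjunction inequality $\eps(A)\subseteq B\iff A\subseteq\pi(B)$ is then immediate from $B=L\dnw B$ and $A\subseteq\Sigma^\vee$. For (ii), trivially $A\subseteq(L\dnw A)\cap\Sigma^\vee$; conversely, any $x\in L\dnw A$ with $x\in\Sigma^\vee$ satisfies $x\leq a$ for some $a\in A$, whence $x\in A$ since $A$ is a lower subset of $\Sigma^\vee$. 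Surjectivity of $\pi$ follows at once.

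For (iii), general adjunction theory ensures that $\pi$, being a right adjoint, preserves meets, which in $\Id L$ and $\Id\Sigma^\vee$ are intersections; and $\pi(\set{0})=\set{0}$ because $0\in\Sigma^\vee$. Hence $\pi$ is a lattice homomorphism iff it preserves binary joins, and by monotonicity only $\pi(A\vee B)\subseteq\pi(A)\vee\pi(B)$ is at stake. Assuming $\Sigma$ is a pre-seed, take $x\in\pi(A\vee B)$, so $x\leq a\vee b$ for some $a\in A$, $b\in B$, and write $x=p_1\vee\cdots\vee p_k$ with each $p_i\in\Sigma$ (the case $x=0$ being trivial since then $x\in\pi(A)\cap\pi(B)$). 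For each $i$, the pre-seed property applied to $\set{a,b}\in\cov(p_i)$ yields a finite $I_i\subseteq\Sigma$ covering $p_i$ and refining $\set{a,b}$; split $I_i=I_i^a\cup I_i^b$ into its elements $\leq a$ and $\leq b$ respectively, and set $s_i:=\bigvee I_i^a\leq a$ and $t_i:=\bigvee I_i^b\leq b$, both in $\Sigma^\vee$. Then $p_i\leq s_i\vee t_i$, and joining over $i$ gives $x\leq s\vee t$ with $s:=\bigvee_i s_i\in A\cap\Sigma^\vee=\pi(A)$ and $t:=\bigvee_i t_i\in\pi(B)$, so $x\in\pi(A)\vee\pi(B)$.

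For the converse, assume $\pi$ preserves binary joins, let $p\in\Sigma$ and $X=\set{a_1,\dots,a_n}\in\cov(p)$. The ideals $A_i:=L\dnw a_i$ satisfy $p\in A_1\vee\cdots\vee A_n$ in $\Id L$, so $p\in\pi(A_1\vee\cdots\vee A_n)=\pi(A_1)\vee\cdots\vee\pi(A_n)$, which yields $p\leq s_1\vee\cdots\vee s_n$ for some $s_i\in(L\dnw a_i)\cap\Sigma^\vee$. Writing each $s_i=\bigvee I_i$ for a finite $I_i\subseteq\Sigma$ (taking $I_i=\es$ when $s_i=0$), the union $I:=\bigcup_i I_i$ is a finite subset of $\Sigma$ refining $X$ with $\bigvee I\geq p$, as required. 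The only real subtlety throughout is the decomposition step in the forward direction of (iii): the pre-seed property is stated only for elements of $\Sigma$, so one must descend from $x\in\pi(A\vee B)\subseteq\Sigma^\vee$ to its $\Sigma$-generators, apply the hypothesis to each, and recombine the refinements—which works precisely because $\Sigma^\vee$ is closed under finite joins.
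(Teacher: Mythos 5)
Your proposal is correct and follows essentially the same route as the paper's proof: the adjunction and retraction in (i)--(ii) are verified the same way, the forward direction of (iii) reduces to elements of $\Sigma$ (you decompose a general $x\in\pi(A\vee B)$ into its $\Sigma$-generators and recombine, while the paper notes directly that the ideal $\pi(A\vee B)$ is generated by $\Sigma\cap\pi(A\vee B)$ --- the same observation), and the converse is the identical computation with principal ideals. No gaps.
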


\begin{proof}
(i). We must prove that $\eps(A)\subseteq B$ if{f} $A\subseteq\pi(B)$, for each $A\in\Id\Sigma^\vee$ and each $B\in\Id L$. This is trivial.

(ii). We must prove that $A=\Sigma^\vee\dnw A$, for each $A\in\Id\Sigma^\vee$. This is trivial.

(iii). Assume first that~$\Sigma$ is a pre-seed. It follows from~(i) and~(ii) that~$\pi$ is a surjective \mh. (The map~$\eps$ is also a one-to-one \jh, but we will not use this fact.) Hence it suffices to prove that $\pi(A\vee B)$ is contained in $\pi(A)\vee\pi(B)$, for all $A,B\in\Id L$. As~$\pi(A\vee B)$ is an ideal of~$\Sigma^\vee$, it is generated by its intersection with~$\Sigma$; thus it suffices to prove that each element $p\in\Sigma\cap\pi(A\vee B)$ belongs to $\pi(A)\vee\pi(B)$. By the definition of the map~$\pi$, the element~$p$ belongs to~$A\vee B$, that is, there exists $(a,b)\in A\times B$ such that $p\leq a\vee b$. As $\set{a,b}\in\cov(p)$ and~$\Sigma$ is a pre-seed, there exists $X\in\cov(p)$ contained in~$\Sigma$ such that $X\leref\set{a,b}$. The latter relation means that $X=(X\dnw a)\cup(X\dnw b)$. Setting $a':=\bigvee(X\dnw a)$ and $b':=\bigvee(X\dnw b)$, it follows that $p\leq\bigvee X=a'\vee b'$. As~$a'\in\pi(A)$ and~$b'\in\pi(B)$, the desired conclusion follows.

Conversely, assume that~$\pi$ is a lattice homomorphism. The assignment\linebreak $a\mapsto\pi(L\dnw a)=\Sigma^\vee\dnw a$ then defines a \jzh\ $\psi\colon L\to\Id\Sigma^\vee$. Let $p\in\Sigma$, let $n$ be a positive integer, and let $a_0,\dots,a_{n-1}\in L$ such that $p\leq\bigvee_{i<n}a_i$. Hence~$p$ belongs to $\psi\bigl(\bigvee_{i<n}a_i\bigr)=\bigvee_{i<n}\psi(a_i)$, that is, there are $x_i\in\psi(a_i)$, for $i<n$, such that $p\leq\bigvee_{i<n}x_i$. For each $i<n$, there exists a finite subset~$X_i$ of $\Sigma\dnw a_i$ such that $x_i=\bigvee X_i$. Therefore, the set $X:=\bigcup_{i<n}X_i$ is contained in~$\Sigma$, refines $\setm{a_i}{i<n}$, and $p\leq\bigvee_{i<n}x_i=\bigvee X$. Therefore, $\Sigma$ is a pre-seed.
\end{proof}

\begin{lemma}\label{L:QuasiSeed}
Let $L$ be a $0$-lattice with a quasi-seed $\Sigma$ and set~$S:=\Sigma^\vee$. Then $\Id S$ is an algebraic and spatial lattice, which contains a copy of~$L$ as a $0$-sublattice, and which belongs to the same lattice variety as~$L$. Furthermore, if~$\Sigma$ is a seed in~$L$, then~$\Id S$ is strongly spatial.
\end{lemma}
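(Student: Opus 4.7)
My plan is to construct the embedding $\psi \colon L \hookrightarrow \Id S$ as the composite $\psi = \pi \circ \iota$, where $\iota \colon L \to \Id L$ is the principal-ideal embedding $a \mapsto L \dnw a$ and $\pi$ is the projection of Lemma~\ref{L:VarSeed}; explicitly, $\psi(a) = S \dnw a$. Since $\iota$ is a lattice embedding and $\pi$ is a lattice homomorphism by Lemma~\ref{L:VarSeed}(iii) (the pre-seed property that comes with the quasi-seed hypothesis), $\psi$ is a $(\vee, \wedge, 0)$-homomorphism. Injectivity of $\psi$ follows from the join-density of $\Sigma$: if $\psi(a) = \psi(b)$, then $\Sigma \dnw a = \Sigma \dnw b$, whence $a = \bigvee(\Sigma \dnw a) = \bigvee(\Sigma \dnw b) = b$.

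Next, $\Id S$ is algebraic as the ideal lattice of a \jzs. For spatiality, I would observe that each $p \in \Sigma \subseteq \J(L)$ is \jirr\ in~$S$, so $\set{s \in S : s < p}$ is an ideal of~$S$, making $S \dnw p$ a point of $\Id S$; and every ideal $I$ of~$S$ satisfies $I = \bigvee\set{S \dnw p : p \in I \cap \Sigma}$, since elements of~$I$ are finite joins of $\Sigma$-elements which lie in~$I$ by downward closure. For the variety inclusion, I would invoke the folklore fact that $\Id L$ belongs to the variety generated by~$L$ (any lattice identity satisfied by~$L$ is satisfied by~$\Id L$ via the elementwise characterisation of term evaluation on ideals): since $\pi \colon \Id L \twoheadrightarrow \Id S$ is a surjective lattice homomorphism, $\Id S$ lies in the same variety.

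Now assume $\Sigma$ is a seed. Every point of $\Id S$ is principal (pick any generator in $P \setminus P_*$), of the form $S \dnw s$ with $s$ \jirr\ in~$S$; since $S = \Sigma^\vee$, any finite $\Sigma$-decomposition of~$s$ forces $s \in \Sigma$, so $\Jc(\Id S) = \set{S \dnw p : p \in \Sigma}$. Given $P = S \dnw p$ with $p \in \Sigma$ and a finite cover $\set{J_1, \dots, J_n}$ of~$P$ in $\Id S$, I would select $s_i \in J_i$ with $p \leq s_1 \vee \cdots \vee s_n$, decompose each $s_i = \bigvee Y_i$ with $Y_i \subseteq \Sigma$ finite, and set $Y := Y_1 \cup \cdots \cup Y_n \subseteq \Sigma$. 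The seed property then produces $I \in \mcov(p)$ (in~$L$) with $I \subseteq \Sigma$ and $I \leref Y$. Setting $\widetilde{I} := \set{S \dnw q : q \in I}$, each $q \in I$ lies below some $y \in Y_i$, hence below $s_i$, so $S \dnw q \subseteq J_i$; thus $\widetilde{I} \subseteq \Jc(\Id S)$ refines $\set{J_1, \dots, J_n}$ in $\Id S$.

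The main technical obstacle is to show that $\widetilde{I}$ is not merely a refinement but a \emph{minimal} cover of~$P$ in $\Id S$. My plan is to bypass minimality directly by verifying tightness in $\Id S$ and then applying Corollary~\ref{C:SumRef}, which is available because $\widetilde{I} \subseteq \J(\Id S)$. For tightness, I would fix $q \in I$ and an ideal $K \subsetneq S \dnw q$; since $(S \dnw q)_* = \set{s \in S : s < q}$, every $s_K \in K$ satisfies $s_K < q$ in~$L$, so a containment $P \subseteq K \vee \bigvee(\widetilde{I} \setminus \set{S \dnw q})$ would yield $p \leq s_K \vee \bigvee(I \setminus \set{q})$ with $s_K < q$, contradicting $I \in \mcov(p) \subseteq \tcov(p)$ in~$L$. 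Hence $\widetilde{I} \in \tcov(P)$, and Corollary~\ref{C:SumRef} places it in $\mcov(P)$, confirming that $\Jc(\Id S)$ is a seed in $\Id S$.
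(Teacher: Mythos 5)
Your proposal is correct and follows essentially the same route as the paper: the embedding $\psi=\pi\circ\eta$ built from Lemma~\ref{L:VarSeed}, the identification of the points of~$\Id S$ as the ideals $S\dnw p$ for $p\in\Sigma$, and the transfer of a minimal cover $I\in\mcov(p)$ of~$L$ to the cover $\setm{S\dnw q}{q\in I}$ in~$\Id S$. The only cosmetic difference is at the last step, where you certify minimality via tightness plus Corollary~\ref{C:SumRef}, while the paper argues minimality directly; both rest on the same observation that every ideal properly contained in $S\dnw q$ lies in $S\ddnw q$, so a failure would contradict the tightness of~$I$ in~$L$.
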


\begin{proof}
  Denote by $\eta\colon L\hookrightarrow\Id L$, $a\mapsto L\dnw a$ the
  canonical lattice embedding. As, by Lemma~\ref{L:VarSeed}, the
  natural map $\pi\colon\Id L\twoheadrightarrow\Id S$ is a lattice
  homomorphism, the composite $\psi:=\pi\circ\eta$ is also a lattice
  homomorphism; it is obviously zero-preserving. Note that
  $\psi(a)=S\dnw a$, for each $a\in L$. As~$\Sigma$ is join-dense
  in~$L$, it follows immediately that~$\psi$ is a $0$-lattice
  embedding from~$L$ into~$\Id S$. As~$\Id S$ is a homomorphic image
  of~$\Id L$ and the latter belongs to the lattice variety generated
  by~$L$ (cf. Gr\"atzer~\cite[Lemma~I.4.8]{GLT2}), so does~$\Id S$.

The lattice $\Id S$, being the ideal lattice of a \jzs, is an algebraic lattice. Now we claim that
 \begin{equation}\label{Eq:SdnwpPoint}
 S\dnw p\text{ is a point of }\Id S\,,\text{ with lower cover }S\ddnw p\,,
 \quad\text{for each }p\in\Sigma\,.
 \end{equation}
Indeed, as~$p$ is \jirr, the subset~$S\ddnw p$ is an ideal of~$S$; furthermore, every ideal of~$S$ properly contained in~$S\dnw p$ is contained in~$S\ddnw p$, which completes the proof of our claim.

Now let $A,B\in\Id S$ such that $A\not\subseteq B$ and let $a\in A\setminus B$. As~$a$ is a finite join of elements of~$\Sigma$, one of those elements belongs to~$A\setminus B$. This proves that the set~$P:=\setm{S\dnw p}{p\in\Sigma}$ is join-dense in~$\Id S$.
In particular, every point of~$\Id S$, being a join of elements of~$P$, belongs to~$P$. Therefore, $P$ is the set of all points of~$\Id S$. As~$P$ is join-dense in~$\Id S$, it follows that~$\Id S$ is spatial.

Finally assume that~$\Sigma$ is a seed in~$L$. We must prove that~$P$
is a seed in~$\Id S$. Let $p\in\Jc(L)$, let~$n$ be a positive integer,
and let~$A_0$, \dots, $A_{n-1}$ be ideals of~$S$ such that $S\dnw
p\subseteq\bigvee_{i<n}A_i$ (i.e., $p\in\bigvee_{i<n}A_i$) in~$\Id S$.
It follows that there are $a_0\in A_0$, \dots, $a_{n-1}\in A_{n-1}$
such that $p\leq\bigvee_{i<n}a_i$. As~$\Sigma$ is a seed in~$L$, there
exists $I\in\mcov(p)$ contained in~$\Sigma$ such that
$I\leref\set{a_0,\dots,a_{n-1}}$; the latter relation implies that
$\setm{S\dnw q}{q\in I}$ refines $\setm{A_i}{i<n}$. {}From
$p\leq\bigvee I$ it follows that
$S\dnw p\subseteq\bigvee_{q\in I}(S\dnw q)$. If this relation is not a minimal
join-covering, then, by~\eqref{Eq:SdnwpPoint} applied to $S\dnw
q$ for all $q\in I$, there exist $q\in I$ and $q'<q$
such that $p\leq q'\vee\bigvee(I\setminus\set{q})$, which contradicts $I\in\mcov(p)$. We have thus proved that $\setm{S\dnw q}{q\in I}$ is a minimal join-cover of~$S\dnw p$ in~$\Id S$ which refines $\setm{A_i}{i<n}$.
\end{proof}

\section{Seeds in $n$-distributive lattices}\label{S:Seedsndistr}

The following lemma is a slight improvement of Nation~\cite[Theorem~3.2]{Nation90}, and its proof is virtually the same.

\begin{lemma}\label{L:NatRefndistr}
Let~$n$ be a positive integer and let~$p$ be a \jirr\ element in a complete lower continuous $n$-distributive lattice~$L$. Then every element of~$\cov(p)$ can be refined to some element of~$\mcov(p)$.
\end{lemma}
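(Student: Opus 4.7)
The plan is to follow the strategy of Theorem~\ref{T:BiAlg2StrSp}, with $n$-distributivity furnishing a hard cardinality bound on tight covers in place of the countable compactness or (dual) well-foundedness of~$L$. Specifically, given $C\in\cov(p)$, I would first apply Lemma~\ref{L:Irr2tight} to refine $C$ to some $E_0\in\tcov(p)$, then locate a $\leref$-minimal element $E^*\in\tcov(p)$ with $E^*\leref E_0$, and finally check $E^*\in\mcov(p)$.

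The key ingredient provided by $n$-distributivity is that every $E\in\tcov(p)$ satisfies $|E|\leq n$. The starting observation is the standard one: if $p\leq\bigvee_{i=0}^{n}z_i$, applying~\eqref{Eq:ndistr} with $\vx:=p$ yields $p=\bigvee_{i=0}^{n}\bigl(p\wedge\bigvee_{j\neq i}z_j\bigr)$, so the \jirry\ of~$p$ forces $p\leq\bigvee_{j\neq i_0}z_j$ for some~$i_0$. For a cover $E=\set{y_0,\dots,y_m}$ with $m\geq n$, I would group the tail by setting $z_i:=y_i$ for $i<n$ and $z_n:=\bigvee_{j\geq n}y_j$; the observation above eliminates at least one $y_i$ from the cover, and iterating yields a proper sub-cover, contradicting the irredundance of a tight cover.

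Next I would establish that $\tcov(p)$ is well-founded under~$\lsref$. Lemmas~\ref{L:SumRef} and~\ref{L:AnTreeStruct} say that whenever $B\leref A$ with $A,B\in\tcov(p)$, the sets $B\dnw a$ ($a\in A$) partition $B$ into nonempty pieces; this recovers Lemma~\ref{L:IncrCardCov} and, more importantly, shows that if $|B|=|A|$ then each $B\dnw a$ is a singleton which joins tightly to~$a$, hence equals~$\set{a}$, so $B=A$. Thus strict refinement in~$\tcov(p)$ \emph{strictly} increases cardinality; combined with $|E|\leq n$, no infinite strictly decreasing chain exists in~$\tcov(p)$. Starting from~$E_0$ and repeatedly refining we therefore obtain, in finitely many steps, a $\leref$-minimal $E^*\in\tcov(p)$ below~$E_0$.

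It remains to verify $E^*\in\mcov(p)$. Given $X\in\cov(p)$ with $X\leref E^*$, refine $X$ via Lemma~\ref{L:Irr2tight} to some $F\in\tcov(p)$, so that $F\leref X\leref E^*$; the minimality of $E^*$ forces $F=E^*$, hence $E^*\leref X$. Then for each $e\in E^*$ we have $e\leq x\leq e'$ for some $x\in X$ and $e'\in E^*$, and the irredundance of $E^*$ (a consequence of $E^*\in\tcov(p)$) forces $e=e'$, so $e=x\in X$. This yields $E^*\subseteq X$, as required. The main technical hurdle is the reduction step in the cardinality bound: the $n$-distributive identity only speaks of $n+1$ terms, so the telescoping ``group the tail'' trick is essential to make it apply to covers of arbitrary size.
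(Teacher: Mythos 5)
Your proof is correct. It shares its engine with the paper's argument --- the bound $|E|\leq n$ on irredundant join-covers of~$p$ coming from $n$-distributivity and the \jirry\ of~$p$, the passage to tight covers via Lemma~\ref{L:Irr2tight}, and the fact that a tight join-cover cannot be properly refined by a tight join-cover of the same cardinality --- but the two proofs are organized differently. The paper first picks an irredundant join-cover~$P$ refining the given cover of \emph{maximal} cardinality $m\leq n$ (so that every irredundant cover refining~$P$ automatically has exactly~$m$ elements), tightens it once to some $Q\in\tcov(p)$, and then shows by a direct bijection argument that every $R\in\icov(p)$ refining~$Q$ equals~$Q$, concluding via Lemma~\ref{L:MinCovDef}; it never invokes Lemmas~\ref{L:SumRef} and~\ref{L:AnTreeStruct}. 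You instead tighten first and then descend: you import the machinery of Theorem~\ref{T:BiAlg2StrSp} to show that strict refinement inside~$\tcov(p)$ strictly increases cardinality, deduce from the bound~$n$ that $(\tcov(p),\lsref)$ admits no infinite descending chain, and then verify directly from the definition that a $\leref$-minimal element of~$\tcov(p)$ lies in~$\mcov(p)$. Both routes are valid; the paper's is slightly more economical and self-contained, while yours makes explicit the well-foundedness of~$\tcov(p)$ under refinement --- precisely the property whose possible failure forces the extra hypotheses in Theorem~\ref{T:BiAlg2StrSp} --- and your telescoping derivation of the cardinality bound (grouping the tail of a large cover into a single $(n+1)$-st joinand before applying~\eqref{Eq:ndistr}) supplies a detail that the paper's first sentence leaves to the reader.
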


\begin{proof}
As~$L$ is $n$-distributive and~$p$ is \jirr, every element of~$\icov(p)$ must have at most~$n$ elements. Now let $E\in\cov(p)$. By the above observation, there exists a maximal-sized irredundant join-cover~$P$ of~$p$ refining~$E$, and further, denoting by~$m$ the cardinality of~$P$, $m\leq n$. Furthermore, every $X\in\icov(p)$ refining~$P$ must have exactly~$m$ elements.

It follows from Lemma~\ref{L:Irr2tight} that there exists
$Q\in\tcov(p)$ such that $Q\leref P$. Every $R\in\icov(p)$
refining~$Q$ refines~$P$, so $|Q|=|R|=m$. Furthermore, for each $r\in
R$, there exists $f(r)\in Q$ such that $r\leq f(r)$, and $p\leq\bigvee
R\leq\bigvee_{r\in R}f(r)$. As the range of~$f$ is contained in~$Q$
and~$Q\in\icov(p)$, $f$ is surjective, and thus it is a bijection
from~$R$ onto~$Q$. Now $p\leq\bigvee R=\bigvee_{q\in Q}f^{-1}(q)$ with
$f^{-1}(q)\leq q$ for each~$q$. As $Q\in\tcov(p)$, $f^{-1}(q)=q$ for each~$q$, and therefore $Q=R$. By Lemma~\ref{L:MinCovDef}, it follows that $Q\in\mcov(p)$.
\end{proof}

\begin{corollary}\label{C:NatRefndistr}
Let~$n$ be a positive integer and let~$L$ be a dually algebraic $n$-distributive lattice. Then~$\J(L)$ and~$\Jc(L)$ are both seeds of~$L$.
\end{corollary}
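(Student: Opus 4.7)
The plan is to reduce both claims to applications of Lemma~\ref{L:NatRefndistr} and Lemma~\ref{L:MJCjirr}. First, I would record the standing consequences of $L$ being dually algebraic: $L$ is complete and lower continuous (the order-dual of any algebraic lattice is upper continuous), and, by the result cited in Section~\ref{S:Basic}, $L$ is spatial. Spatiality immediately yields that $\Jc(L)$ is join-dense in $L$, so $\J(L)\supseteq\Jc(L)$ is also join-dense; and by definition both sets sit inside $\J(L)$. This disposes of the join-density and containment requirements in the definition of a seed.

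For $\J(L)$: given $p\in\J(L)$ and $X\in\cov(p)$, the hypotheses of Lemma~\ref{L:NatRefndistr} (completeness, lower continuity, $n$-distributivity) are in force, so there exists $I\in\mcov(p)$ with $I\leref X$. The first assertion of Lemma~\ref{L:MJCjirr} gives $I\subseteq\J(L)$, which verifies the seed condition for $\J(L)$.

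For $\Jc(L)$: applying the same recipe to $p\in\Jc(L)$ again produces $I\in\mcov(p)$ refining $X$, and I would invoke the second assertion of Lemma~\ref{L:MJCjirr} to obtain the stronger conclusion $I\subseteq\Jc(L)$. This reduces the whole question to showing that every point of $L$ is compact. This compactness step is the part I expect to be the main obstacle, because dually algebraic lattices need not be upper continuous (upper continuity of $L$ would directly give compactness of points). My strategy here is to exploit the dually algebraic structure more carefully: given a point $p$ with lower cover $p_*$, dual algebraicity supplies a dually compact $c\geq p_*$ with $p\wedge c=p_*$ (otherwise the filtered meet of all dually compacts above $p_*$ would dominate $p$, forcing $p_*\geq p$). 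Assuming for contradiction a directed $D$ with $p\leq\bigvee D$ but no $d\in D$ with $p\leq d$, one has $p\wedge d\leq p_*$ for every $d\in D$; a careful analysis combining this witness $c$, the filtered meet representation of relevant upper bounds, and lower continuity should force the desired contradiction. Once compactness of points is established, the second part of Lemma~\ref{L:MJCjirr} closes the $\Jc(L)$ case.
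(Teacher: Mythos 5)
Your treatment of $\J(L)$ is exactly the paper's: dual algebraicity gives completeness, lower continuity and spatiality, Lemma~\ref{L:NatRefndistr} produces $I\in\mcov(p)$ refining $X$, and the first assertion of Lemma~\ref{L:MJCjirr} places $I$ inside $\J(L)$. That half of the corollary is in order and matches the paper's argument step for step.

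For $\Jc(L)$ you have correctly isolated the crux --- the second assertion of Lemma~\ref{L:MJCjirr} needs the point~$p$ to be compact --- and your unease about this step is well placed: the paper's own proof disposes of it by citing Proposition~\ref{P:BasicCompGen}, whose hypothesis of compact generation is not among the assumptions here. Unfortunately the repair you sketch cannot be completed, because the intermediate claim is false: points of a dually algebraic lattice need not be compact, even in the presence of $n$-distributivity. Let~$L$ consist of a chain $0=c_0<c_1<c_2<\cdots$ with join~$\top$ together with one extra atom~$p$ satisfying $p\vee c_i=\top$ and $p\wedge c_i=0$ for $i\geq1$. This lattice is complete and has no infinite descending chain, so every filtered subset contains its meet, every element is dually compact, and~$L$ is dually algebraic; it is routine to check that it is $2$-distributive; yet the point~$p$ lies below $\top=\bigvee_ic_i$ (a directed join) without lying below any~$c_i$. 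Your Galois-type observation that dual algebraicity supplies a dually compact $c\geq p_*$ with $p\wedge c=p_*$ is correct as far as it goes (here $c:=c_1$ is such a witness), but no ``careful analysis'' will parlay it into compactness of~$p$. Worse, the example can be elaborated --- adjoin a second atom~$r$ and arrange a minimal join-cover $p\leq q\vee r$ in which $q=\bigvee_iq_i$ is \jirr\ but is a directed join of points rather than a point itself --- into a dually algebraic, $2$-distributive lattice in which no join-cover of~$p$ refining $\set{q,r}$ consists of points at all; so the $\Jc(L)$ half of the statement appears to require additional hypotheses, not merely a better proof. The $\J(L)$ half, which is the only one used later in the paper (Theorem~\ref{T:NatRefndistr}), is unaffected.
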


\begin{proof}
As~$L$ is dually algebraic, it is spatial, thus~$\Jc(L)$ (and thus also~$\J(L)$) is join-dense in~$L$. Now let $p\in\J(L)$ and let~$X\in\cov(p)$. As~$L$ is dually algebraic, it is complete and lower continuous, thus it follows from Lemma~\ref{L:NatRefndistr} that there exists $I\in\mcov(p)$ such that $I\leref X$. By Lemma~\ref{L:MJCjirr}, $I$ is contained in~$\J(L)$. Furthermore, if~$p$ is a point, then it is compact (cf. Proposition~\ref{P:BasicCompGen}), thus, by Lemma~\ref{L:MJCjirr}, $I$ is contained in~$\Jc(L)$.
\end{proof}

\begin{theorem}\label{T:NatRefndistr}
Let~$n$ be a positive integer. Then every $n$-distributive lattice~$L$ can be embedded, within its variety, into an algebraic and strongly spatial lattice~$\ol{L}$ such that if~$L$ has a least element \pup{a largest element, both a least and a largest element, respectively}, then so does~$\ol{L}$.
\end{theorem}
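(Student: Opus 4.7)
The plan is to chain together the constructions of the previous two sections. The three ingredients are: (a)~a replacement of~$L$ by a dually algebraic $n$-distributive lattice lying in the variety of~$L$; (b)~Corollary~\ref{C:NatRefndistr}, which supplies a seed in that lattice; (c)~Lemma~\ref{L:QuasiSeed}, which turns such a seed into an algebraic, strongly spatial overlattice in the same variety.

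For step~(a), I would embed~$L$ into its filter lattice $M:=\Fil L$ (ordered by reverse inclusion) via the principal-filter map $\eta\colon a\mapsto L\upw a$. A direct computation gives $\eta(a\vee b)=(L\upw a)\cap(L\upw b)=\eta(a)\vee_M\eta(b)$ and $\eta(a\wedge b)=\eta(a)\wedge_M\eta(b)$, so~$\eta$ is a lattice embedding. Viewed with inclusion, $\Fil L$ is isomorphic to $\Id(L^{\op})$, which is algebraic and lies in the variety of~$L^{\op}$ by \cite[Lemma~I.4.8]{GLT2}; dualizing (a lattice identity holds in~$K$ if{f} its dual holds in~$K^{\op}$) yields that~$M$ is dually algebraic and lies in the variety of~$L$. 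In particular~$M$ is $n$-distributive. The bottom of~$M$ is the filter~$L$ itself; if~$L$ has a~$0$, then $\eta(0_L)=L=0_M$, and if~$L$ has a~$1$, then $\eta(1_L)=\set{1_L}$ is the top of~$M$.

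For step~(b), Corollary~\ref{C:NatRefndistr} applied to the dually algebraic $n$-distributive lattice~$M$ gives that $\Sigma:=\Jc(M)$ is a seed in~$M$. For step~(c), Lemma~\ref{L:QuasiSeed} applied to the $0$-lattice~$M$ with this seed yields an algebraic, strongly spatial lattice $\ol{L}:=\Id(\Sigma^\vee)$ that contains~$M$ as a $0$-sublattice and lies in the variety of~$M$, which equals the variety of~$L$. The composite $L\hookrightarrow M\hookrightarrow\ol{L}$ is then the required embedding. Observe that~$\ol{L}$ always has both a least element (namely~$\set{0_M}$) and a greatest element (namely~$\Sigma^\vee$); moreover, when~$L$ has a least or greatest element, the computation of~$\eta$ above shows that the composite sends~$0_L$ to~$\set{0_M}$ and~$1_L$ to~$\Sigma^\vee$, so that~$\ol{L}$ inherits whichever bounds~$L$ possesses.

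The only delicate point is step~(a), namely, justifying that $\Fil L$ ordered by reverse inclusion really belongs to the variety of~$L$ (together with a clean bookkeeping of the bounds). Once this dualization is done, the remaining steps are direct applications of the lemmas already established in Sections~\ref{S:Seed2Spat} and~\ref{S:Seedsndistr}.
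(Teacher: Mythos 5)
Your proposal is correct and follows essentially the same route as the paper: embed $L$ into $\Fil L$ to reduce to the dually algebraic case, apply Corollary~\ref{C:NatRefndistr} to obtain a seed, and feed that seed into Lemma~\ref{L:QuasiSeed} to produce the algebraic, strongly spatial extension within the variety. The only cosmetic differences are that you take $\Jc(M)$ rather than $\J(M)$ as the seed (both are seeds by the corollary) and that you verify preservation of the top by direct computation, whereas the paper handles it by passing to the principal ideal generated by the image of the largest element.
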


\begin{proof}
The lattice~$L$ can be embedded, zero-preservingly in case~$L$ has a zero, into its filter lattice~$\Fil L$, \emph{via} the assignment $a\mapsto L\upw a$; furthermore, $\Fil L$ generates the same variety as~$L$ (cf. Section~I.3 and Lemma~I.4.8 in Gr\"atzer~\cite{GLT2}). As~$\Fil L$ is dually algebraic, we may assume that~$L$ is dually algebraic.

By Corollary~\ref{C:NatRefndistr}, $\J(L)$ is a seed in~$L$. By Lemma~\ref{L:QuasiSeed}, the ideal lattice~$\ol{L}$ of $\J(L)^\vee$ is algebraic and strongly spatial while it generates the same variety as~$L$. This takes care of everything except the preservation of the largest element of~$L$ if it exists. However, as every ideal of an algebraic (resp., strongly spatial) lattice is algebraic (resp., strongly spatial), the latter point is easily taken care of by replacing~$\ol{L}$ by the principal ideal generated by the largest element of~$L$.
\end{proof}

\section{Generation of the variety of all $n$-distributive lattices}\label{S:ndistrFinGen}

In Lemmas~\ref{L:Pjoinndistr} and~\ref{L:ApproxTerms}, we shall fix a positive integer~$n$ and a $n$-distributive, algebraic, and spatial lattice~$L$. The set~$\PP$ of all finite subsets of~$\Jc(L)$, partially ordered by inclusion, is directed. For each~$P\in\PP$, the \jz-subsemilattice~$P^\vee$ of~$L$ generated by~$P$ is a finite lattice, although not necessarily a sublattice of~$L$. We shall denote by~$\wedge_P$ the meet operation in~$P^\vee$. Likewise, we shall denote by~$\xt^P$ the interpretation of a lattice term~$\xt$ in the lattice~$P^\vee$.

\begin{lemma}\label{L:Pjoinndistr}
The lattice $P^\vee$ is $n$-distributive, for each $P\in\PP$.
\end{lemma}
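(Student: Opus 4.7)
The plan is to exploit the fact that $P^\vee$ is a $(\vee,0)$-subsemilattice of $L$ (so finite joins in $P^\vee$ agree with those in $L$), while the meet $\wedge_P$ can be computed as
\[
a\wedge_P b=\bigvee\setm{p\in P}{p\leq a\text{ and }p\leq b}\,,
\qquad\text{for all }a,b\in P^\vee\,,
\]
since every element of $P^\vee$ is a finite join of elements of $P$, and an element of $P^\vee$ lies below $a$ (in $L$) iff all of its $P$-generators do. I would begin by recording this formula.

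Fix $a,b_0,\dots,b_n\in P^\vee$. The inequality
\[
\bigvee_{i=0}^{n}\Bigl(a\wedge_P\bigvee_{j\neq i}b_j\Bigr)\leq a\wedge_P\bigvee_{i=0}^{n}b_i
\]
is immediate, because each summand on the left lies in $P^\vee$ and is below both $a$ and $\bigvee_i b_i$. For the reverse inequality, set $c:=a\wedge_P\bigvee_{i=0}^{n}b_i$. By the formula above, $c=\bigvee S$ where
\[
S:=\Bigl\{p\in P\mid p\leq a\text{ and }p\leq\bigvee_{i=0}^{n}b_i\Bigr\}\,,
\]
so it is enough to show that every $p\in S$ lies below $\bigvee_{i=0}^{n}\bigl(a\wedge_P\bigvee_{j\neq i}b_j\bigr)$.

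Let $p\in S$. Since $L$ is $n$-distributive and $p\leq\bigvee_i b_i$, applying the identity in $L$ with $\vx:=p$ and $\vy_i:=b_i$ yields
\[
p=p\wedge\bigvee_{i=0}^{n}b_i=\bigvee_{i=0}^{n}\Bigl(p\wedge\bigvee_{j\neq i}b_j\Bigr)\,.
\]
Because $p\in P\subseteq\Jc(L)$ is (completely) \jirr, this finite join forces $p=p\wedge\bigvee_{j\neq i}b_j$ for some $i$, that is, $p\leq\bigvee_{j\neq i}b_j$. Combined with $p\leq a$ and $p\in P$, the formula for $\wedge_P$ then gives $p\leq a\wedge_P\bigvee_{j\neq i}b_j$. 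Joining over all $p\in S$ yields the desired inequality and completes the proof.

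The only point demanding any care is the passage from $n$-distributivity in $L$ to $n$-distributivity in the non-sublattice $P^\vee$; the bridge is precisely the join-irreducibility of each $p\in P$, which lets us convert the $L$-identity applied to $p$ into membership of $p$ in a single term $a\wedge_P\bigvee_{j\neq i}b_j$. I do not foresee any serious obstacle beyond keeping track of which meets are computed in $L$ and which in $P^\vee$.
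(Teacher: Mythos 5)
Your proof is correct, but it follows a genuinely different route from the paper's. The paper does not verify the identity directly: it invokes Nation's Theorem~3.1, which characterizes $n$-distributivity of a \emph{finite} lattice by the condition that every irredundant join-cover of a join-irreducible element has at most~$n$ elements; it then observes that $\J(P^\vee)=P$, that an irredundant join-cover in the \jz-subsemilattice~$P^\vee$ is still one in~$L$ (joins agree), and that $n$-distributivity of~$L$ together with $p\in\J(L)$ bounds the size of such a cover by~$n$. You instead compute $\wedge_P$ explicitly as $a\wedge_P b=\bigvee\setm{p\in P}{p\leq a\text{ and }p\leq b}$ and transport the identity from~$L$ down to~$P^\vee$ one generator at a time, using the \jirry\ of each $p\in P$ in~$L$ to collapse the $L$-identity applied at~$p$ into membership in a single term $a\wedge_P\bigvee_{j\neq i}b_j$. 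The two arguments rest on the same two pivots---$P^\vee$ is a join-subsemilattice of~$L$, and $P\subseteq\Jc(L)\subseteq\J(L)$---but yours is self-contained and elementary where the paper outsources the combinatorial content to Nation's characterization, while the paper's version is shorter given the citation. All the steps in your write-up check out, including the subtle ones: the formula for~$\wedge_P$ is justified because an element of~$P^\vee$ lies below~$a$ exactly when all elements of~$P$ in some (equivalently, any) join-representation of it do, and the passage from $p=\bigvee_{i}\bigl(p\wedge\bigvee_{j\neq i}b_j\bigr)$ to $p\leq\bigvee_{j\neq i}b_j$ for some~$i$ is exactly what join-irreducibility (in the paper's finite-join sense) provides.
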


\begin{proof}
By Nation~\cite[Theorem~3.1]{Nation90}, it suffices to prove that for every $p\in\J(P^\vee)$, every irredundant join-cover~$X$ of~$p$ in~$P^\vee$ contains at most~$n$ elements. Now observe that $\J(P^\vee)=P$. As~$P^\vee$ is a \jz-subsemilattice of~$L$, $X$ is also an irredundant join-cover of~$p$ in~$L$. As~$p\in\J(L)$ and~$L$ is $n$-distributive, $|X|\leq n$.
\end{proof}

Denote by $\proj{a}{P}$ the largest element of~$P^\vee$ below~$a$, for each $a\in L$ and each $P\in\PP$. For a vector $\vec a=(a_1,\dots,a_k)$, we shall use the notation $\proj{\vec a}{P}:=(\proj{(a_1)}{P},\dots,\proj{(a_k)}{P})$.

\begin{lemma}\label{L:ApproxTerms}
The following statements hold, for every lattice term~$\xt$ with $k$ variables and every vector $\vec a=(a_1,\dots,a_k)\in L^k$:
\begin{enumerate}
\item $\xt^P(\proj{\vec a}{P})\leq\xt^Q(\proj{\vec a}{Q})$ holds for all $P\subseteq Q$ in~$\PP$;

\item $\xt(\vec a)=\bigvee\famm{\xt^P(\proj{\vec a}{P})}{P\in\PP}$ in~$L$.
\end{enumerate}
\end{lemma}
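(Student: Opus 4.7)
The plan is to prove both parts by structural induction on the lattice term $\xt$, treating variables, joins, and meets separately. The projection-to-$P$ behaves well for joins (which agree with $L$) but distorts for meets (since $\wedge_P$ is in general strictly below $\wedge$ in $L$); this is where the care is needed.

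For (i), the variable case is immediate from $P\subseteq Q$ implying $\proj{a_i}{P}\leq\proj{a_i}{Q}$, and the join case is straightforward since joins in $P^\vee$, $Q^\vee$, and $L$ coincide. The interesting case is $\xt=\xt_1\wedge\xt_2$. By the induction hypothesis, $\xt_j^P(\proj{\vec a}{P})\leq\xt_j^Q(\proj{\vec a}{Q})$ for $j=1,2$. The element $z:=\xt^P(\proj{\vec a}{P})=\xt_1^P(\proj{\vec a}{P})\wedge_P\xt_2^P(\proj{\vec a}{P})$ lies in $P^\vee\subseteq Q^\vee$ and satisfies $z\leq\xt_j^Q(\proj{\vec a}{Q})$ (in $L$) for each $j$; since $\xt^Q(\proj{\vec a}{Q})=\xt_1^Q(\proj{\vec a}{Q})\wedge_Q\xt_2^Q(\proj{\vec a}{Q})$ is by definition the largest element of $Q^\vee$ below the $L$-meet of the two, we obtain $z\leq\xt^Q(\proj{\vec a}{Q})$.

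Part (i) implies that $\famm{\xt^P(\proj{\vec a}{P})}{P\in\PP}$ is monotone along the directed poset $\PP$, so the join in (ii) is actually a directed join, and one inequality $\bigvee_P\xt^P(\proj{\vec a}{P})\leq\xt(\vec a)$ will fall out easily by induction once we note $x\wedge_P y\leq x\wedge y$ in $L$. For the reverse inequality in (ii), I again induct on $\xt$. The variable case reduces to $a_i=\bigvee_{P\in\PP}\proj{a_i}{P}$, which follows because $L$ is spatial: any point $p\leq a_i$ satisfies $p\leq\proj{a_i}{\set{p}}$. The join case uses directedness of $\PP$ together with (i) to pull the join of the two induction hypotheses through a common $P$.

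The meet case is the main obstacle, and it is where spatiality and compactness of points combine. I want to show $\xt_1(\vec a)\wedge\xt_2(\vec a)\leq\bigvee_{P\in\PP}\xt^P(\proj{\vec a}{P})$. Because $L$ is algebraic and spatial, it suffices (by join-density of $\Jc(L)$) to fix a point $p\leq\xt_1(\vec a)\wedge\xt_2(\vec a)$ and produce a single $P\in\PP$ with $p\leq\xt^P(\proj{\vec a}{P})$. By the induction hypothesis, $p\leq\bigvee_Q\xt_j^Q(\proj{\vec a}{Q})$ (directed join, by (i)) for $j=1,2$; compactness of the point $p$ (Proposition~\ref{P:BasicCompGen}) provides $P_1,P_2\in\PP$ with $p\leq\xt_j^{P_j}(\proj{\vec a}{P_j})$. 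Setting $P:=P_1\cup P_2\cup\set{p}$ and using (i), we have $p\leq\xt_1^P(\proj{\vec a}{P})\wedge\xt_2^P(\proj{\vec a}{P})$ in $L$. The clever bit is that $p\in P\subseteq P^\vee$, so $p$ is itself an element of the finite sublattice $P^\vee$; hence $p$ is bounded above, in $P^\vee$, by the $P^\vee$-meet of the $\xt_j^P(\proj{\vec a}{P})$, which is exactly $\xt^P(\proj{\vec a}{P})$. This yields the required inequality and completes the induction.
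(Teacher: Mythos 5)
Your proof is correct and follows essentially the same route as the paper's: induction on the complexity of~$\xt$ for both parts, with the variable case of~(ii) handled by spatiality via $P:=\set{p}$, and the meet case of~(ii) handled by compactness of the point~$p$ together with the key device of adjoining~$p$ to $P_1\cup P_2$ so that $p$ itself lies in~$P^\vee$ and is therefore below the $\wedge_P$-meet. No gaps; the argument matches the paper's in all essentials.
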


We shall abbreviate the conjunction of~(i) and~(ii) above by the notation
 \begin{equation}\label{Eq:NotIncrJoin}
 \xt(\vec a)=\JJd\famm{\xt^P(\proj{\vec a}{P})}{P\in\PP}\,.
 \end{equation}

\begin{proof}
  (i). We argue by induction on the complexity of~$\xt$. The result is
  obvious in case~$\xt$ is a projection. Now, supposing we have proved the
  statement for terms~$\xt_0$ and~$\xt_1$, we
  must prove it for both terms $\xt_0\wedge\xt_1$ and
  $\xt_0\vee\xt_1$. Let $\vec a\in L^k$, set
  $b_i^R:=\xt_i^R(\proj{\vec a}{R})$ for $i\in\set{0,1}$ and
  $R\in\set{P,Q}$. It follows from the induction hypothesis that
  $b_i^P\leq b_i^Q$ for all $i\in\set{0,1}$. Now $b_0^P\wedge_Pb_1^P$
  is an element of~$P^\vee$, thus of~$Q^\vee$, contained in both
  elements~$b_0^Q$ and~$b_1^Q$, thus also in $b_0^Q\wedge_Qb_1^Q$;
  this proves that
 \[
 (\xt_0\wedge\xt_1)^P(\proj{\vec a}{P})=b_0^P\wedge_Pb_1^P\leq b_0^Q\wedge_Qb_1^Q
 =(\xt_0\wedge\xt_1)^Q(\proj{\vec a}{Q})\,.
 \]
Next, $b_i^P\leq b_i^Q\leq b_0^Q\vee b_1^Q$ for each $i\in\set{0,1}$, thus $b_0^P\vee b_1^P\leq b_0^Q\vee b_1^Q$; this proves that
 \[
 (\xt_0\vee\xt_1)^P(\proj{\vec a}{P})=b_0^P\vee b_1^P\leq b_0^Q\vee b_1^Q
 =(\xt_0\vee\xt_1)^Q(\proj{\vec a}{Q})\,.
 \]

(ii). Again, we argue on the complexity of~$\xt$. In case~$\xt$ is a projection, we must prove the relation
 \begin{equation}\label{Eq:ajoinofaP}
 a=\bigvee\famm{\proj{a}{P}}{P\in\PP}\,,\quad\text{for each }a\in L\,. 
 \end{equation}
Trivially $a\geq\proj{a}{P}$ for each~$P$. Conversely, let $b\in L$ such that $\proj{a}{P}\leq b$ for each $P\in\PP$, we must prove that $a\leq b$. As~$\Jc(L)$ is join-dense in~$L$, it suffices to prove that $p\leq b$ for each point~$p$ such that $p\leq a$. Setting $P:=\set{p}$, we obtain that $p=\proj{a}{P}\leq b$. This completes the proof of~\eqref{Eq:ajoinofaP}.

For the induction step, let~$\xt_0$ and~$\xt_1$ be lattice terms for which the induction hypothesis holds at a vector $\vec a\in L^k$, that is,
 \begin{equation}\label{Eq:IHfortiveca}
 \xt_i(\vec a)=\JJd\famm{\xt_i^P(\proj{\vec a}{P})}{P\in\PP}\,,\quad\text{for each }i\in\set{0,1}\,.
 \end{equation}
We must prove the inequality
 \begin{equation}\label{Eq:tveca}
 \xt(\vec a)\leq\JJd\famm{\xt^P(\proj{\vec a}{P})}{P\in\PP}\,,
 \end{equation}
for each $\xt\in\set{\xt_0\wedge\xt_1,\xt_0\vee\xt_1}$. We first deal with the meet. Let~$p\in\Jc(L)$ lying below $\xt_0(\vec a)\wedge\xt_1(\vec a)$. For each $i\in\set{0,1}$, as~$p$ is a compact element lying below the right hand side of~\eqref{Eq:IHfortiveca}, which is a directed join by~(i), there exists $P_i\in\PP$ such that $p\leq\xt_i^{P_i}(\proj{\vec a}{P_i})$. Set $P:=P_0\cup P_1\cup\set{p}$. It follows from~(i) that $p\leq\xt_i^{P_i}(\proj{\vec a}{P_i})\leq\xt_i^{P}(\proj{\vec a}{P})$, for each  $i\in\set{0,1}$; and thus, as $p\in P$, we get $p\leq(\xt_0\wedge\xt_1)^P(\proj{\vec a}{P})$.

Now suppose that $\xt=\xt_0\vee\xt_1$. For each $i\in\set{0,1}$,
 \[
 \xt_i(\vec a)=\JJd\famm{\xt_i^P(\proj{\vec a}{P})}{P\in\PP}\leq
 \JJd\famm{\xt^P(\proj{\vec a}{P})}{P\in\PP}\,,
 \]
thus, by forming the join of those inequalities for $i\in\set{0,1}$, we obtain~\eqref{Eq:tveca}.
\end{proof}

We do not claim that the lattice~$P^\vee$ satisfies every lattice
identity satisfied by~$L$ (in fact, easy examples show that this is
not the case as a rule). However, we can still prove the following
result.

\begin{theorem}\label{T:Finn-distr}
Let~$n$ be a positive integer. Then the variety~$\cD_n$ of all $n$-distributive lattices is generated by its finite members. Consequently, the word problem for free lattices in~$\cD_n$ is decidable.
\end{theorem}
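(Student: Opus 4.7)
The plan is to prove the first claim (that $\cD_n$ is generated by its finite members); the decidability of the word problem for free lattices then follows by a standard Birkhoff-type argument. To start, I would take a lattice identity which fails in $\cD_n$; without loss of generality it is an inequality $\xs\leq\xt$ between two lattice terms, and its failure produces an $n$-distributive lattice $L$ together with a vector $\vec a\in L^k$ such that $\xs(\vec a)\not\leq\xt(\vec a)$. Using Theorem~\ref{T:NatRefndistr}, I can embed $L$ within its variety into an algebraic, strongly spatial lattice; since a failing inequality is preserved under embeddings, I may simply assume that $L$ itself is algebraic and spatial, so that $L$ satisfies the hypotheses of Lemmas~\ref{L:Pjoinndistr} and~\ref{L:ApproxTerms}. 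Spatiality then yields a point $p\in\Jc(L)$ with $p\leq\xs(\vec a)$ and $p\not\leq\xt(\vec a)$, and by Proposition~\ref{P:BasicCompGen} this $p$ is compact.

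The core of the argument is to feed this data into Lemma~\ref{L:ApproxTerms}. That lemma expresses $\xs(\vec a)=\JJd\famm{\xs^P(\proj{\vec a}{P})}{P\in\PP}$ as a directed join, so compactness of $p$ produces some $P\in\PP$ with $p\leq\xs^P(\proj{\vec a}{P})$. Meanwhile, $\xt^P(\proj{\vec a}{P})\leq\xt(\vec a)$ by Lemma~\ref{L:ApproxTerms}(i), so $p\not\leq\xt^P(\proj{\vec a}{P})$. Hence the inequality $\xs^P(\proj{\vec a}{P})\leq\xt^P(\proj{\vec a}{P})$ fails in $L$, and since $P^\vee$ inherits the order of $L$ it fails in $P^\vee$ as well. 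By Lemma~\ref{L:Pjoinndistr}, $P^\vee$ is a finite $n$-distributive lattice witnessing the failure of $\xs\leq\xt$, and I conclude that $\cD_n$ is generated by its finite members.

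For the decidability statement, the variety $\cD_n$ is finitely axiomatized (by the lattice axioms together with~\eqref{Eq:ndistr}), so Birkhoff's completeness theorem makes its set of valid identities recursively enumerable; dually, the class of finite $n$-distributive lattices is recursive and effectively enumerable, and by the previous step any identity failing in $\cD_n$ is falsified by one of them, so the set of non-identities is also recursively enumerable. Running the two searches in parallel then yields a decision procedure for the word problem in free lattices of $\cD_n$. The main obstacle in the whole argument is the compactness step that localizes $p$ to a finite $P\in\PP$: without Theorem~\ref{T:NatRefndistr} to supply an algebraic (hence point-compact) spatial replacement for $L$, there would be no way to convert the global failure $\xs(\vec a)\not\leq\xt(\vec a)$ into a failure inside a finite $n$-distributive lattice, which is precisely the point at which the geometric-description machinery developed in the earlier sections is used.
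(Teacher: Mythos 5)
Your proposal is correct and follows essentially the same route as the paper: reduce to algebraic spatial $n$-distributive lattices via Theorem~\ref{T:NatRefndistr}, then approximate by the finite $n$-distributive lattices $P^\vee$ using Lemmas~\ref{L:Pjoinndistr} and~\ref{L:ApproxTerms}. The paper argues directly by equating the two directed joins $\JJd\famm{\xs^P(\proj{\vec a}{P})}{P\in\PP}$ and $\JJd\famm{\xt^P(\proj{\vec a}{P})}{P\in\PP}$, whereas you argue contrapositively by localizing a failure at a compact point (and your appeal to part~(i) of Lemma~\ref{L:ApproxTerms} for $\xt^P(\proj{\vec a}{P})\leq\xt(\vec a)$ should really be to part~(ii)); these are the same argument, and your concluding decidability step is just McKinsey's theorem spelled out.
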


\begin{proof}
Let~$\xs$ and~$\xt$ be lattice terms, say of arity~$k$, such that every finite lattice in~$\cD_n$ satisfies the identity~$\xs=\xt$. We must prove that every lattice in~$\cD_n$ satisfies that identity. By Theorem~\ref{T:NatRefndistr}, it suffices to prove that every $n$-distributive, algebraic, and spatial lattice~$L$ satisfies the identity. Define the directed poset~$\PP$ as above. It follows from Lemma~\ref{L:Pjoinndistr} that~$P^\vee$ is $n$-distributive, for each $P\in\PP$. Thus, by assumption, $P^\vee$ satisfies the identity~$\xs=\xt$. Therefore, for each vector $\vec a\in L^k$,
 \begin{align*}
 \xs(\vec a)&=\JJd\famm{\xs^P(\proj{\vec a}{P})}{P\in\PP}
 &&(\text{by Lemma~\ref{L:ApproxTerms}})\\
 &=\JJd\famm{\xt^P(\proj{\vec a}{P})}{P\in\PP}
 &&(\text{because }P^\vee\text{ satisfies }\xs=\xt)\\
 &=\xt(\vec a)
 &&(\text{by Lemma~\ref{L:ApproxTerms}})\,,
 \end{align*}
which concludes the proof of the first statement. The decidability statement then follows from McKinsey~\cite[Theorem~3]{McKi}.
\end{proof}

\section{Seeds in modular lattices}\label{S:Mod}

\begin{lemma}\label{L:MinCovMod}
Let $p$, $\ol{p}$, and~$a$ be elements in a modular lattice~$L$ with~$p$ and~$\ol{p}$ both \jirr, and let $Q\in\mcov(a)$ satisfying the following conditions:
\begin{enumerate}
\item $p\leq\ol{p}\vee a$ is a tight join-cover;

\item $\ol{p}\notin Q$;

\item $p\leq\bigvee(\set{\ol{p}}\cup Q)$ irredundantly.
\end{enumerate}
Then $\set{\ol{p}}\cup Q$ belongs to~$\mcov(p)$.
\end{lemma}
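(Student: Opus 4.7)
My strategy is to verify that $\set{\ol{p}}\cup Q$ is a \emph{tight} join-cover of $p$; Corollary~\ref{C:SumRef} will then upgrade this to minimality, since $\ol{p}\in\J(L)$ by hypothesis and $Q\subseteq\J(L)$ by Lemma~\ref{L:MJCjirr}, whence $\set{\ol{p}}\cup Q\subseteq\J(L)$. Condition~(iii) already puts $\set{\ol{p}}\cup Q$ in $\icov(p)$, so what remains is to show, for each $u\in\set{\ol{p}}\cup Q$ and each $x<u$, that $p\nleq x\vee\bigvee\bigl((\set{\ol{p}}\cup Q)\setminus\set{u}\bigr)$. Throughout I keep the auxiliary element $s:=\ol{p}\wedge\bigvee Q$, which serves as a residual to be absorbed in the modular computations.

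For $u=\ol{p}$ and $x<\ol{p}$: assume $p\leq x\vee\bigvee Q$. Intersecting with $\ol{p}\vee a$ (which contains $p$ by~(i)) and applying modularity twice---first with $x\leq\ol{p}\leq\ol{p}\vee a$, then with $a\leq\bigvee Q$---yields $p\leq x\vee a\vee s$. Now $x\vee s\leq\ol{p}$; join-irreducibility of $\ol{p}$ together with $x<\ol{p}$ rules out equality unless $s=\ol{p}$, which would give $\ol{p}\leq\bigvee Q$ and contradict the irredundancy of $\ol{p}$ in~(iii). Hence $x\vee s<\ol{p}$, and tightness~(i) at $\ol{p}$ provides the contradiction $p\nleq(x\vee s)\vee a$.

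For $u=q\in Q$ and $x<q$: set $z:=\bigvee(Q\setminus\set{q})$ and assume $p\leq\ol{p}\vee x\vee z$. The same modular-plus-tightness trick, applied twice, yields first $a\leq\ol{p}\vee x\vee z$ and then, meeting with $\bigvee Q$, $a\leq s\vee x\vee z$. The key upgrade is to deduce $q\leq a\vee z$: I apply $Q\in\mcov(a)$ to the refinement $(Q\setminus\set{q})\cup\set{(a\vee z)\wedge q}$ of $Q$, whose join equals $a\vee z$ by Dedekind's isomorphism between the intervals $[z,\bigvee Q]$ and $[z\wedge q,q]$; minimality forces $(a\vee z)\wedge q=q$. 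Chaining $q\leq a\vee z\leq s\vee x\vee z\leq\ol{p}\vee x\vee z$ and using modularity with $x\leq q$ gives $q=x\vee\bigl((\ol{p}\vee z)\wedge q\bigr)$, whereupon join-irreducibility of $q$ (with $x<q$) forces $q\leq\ol{p}\vee z$. Consequently $p\leq\ol{p}\vee\bigvee Q=\ol{p}\vee q\vee z=\ol{p}\vee z$, contradicting the irredundancy of $q$ in~(iii). This second case is the main obstacle: the residual $s$ has to be shed at every stage via coordinated use of modularity and the join-irreducibility of both $\ol{p}$ and $q$, and the minimality of $Q$ for $a$ must be invoked at a subtle nontrivial refinement (the Dedekind projection of $a\vee z$ down to $q$) rather than at a naive subset of $Q$.
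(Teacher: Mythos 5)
Your proof is correct and follows essentially the same route as the paper's: establish that $\set{\ol{p}}\cup Q$ covers~$p$ tightly via the two cases $u=\ol{p}$ and $u=q\in Q$ (using modularity, the tightness of~(i), the minimality of~$Q$, and \jirry), then invoke Corollary~\ref{C:SumRef}. The only real variation is in the second case, where you reach $q\leq\ol{p}\vee x\vee z$ by first extracting $q\leq a\vee z$ from the minimality of~$Q$ applied to the refinement $(Q\setminus\set{q})\cup\set{(a\vee z)\wedge q}$, while the paper applies that minimality directly to $(Q\setminus\set{q})\cup\set{q\wedge(\ol{p}\vee x\vee z)}$; both are the same Dedekind-transposition idea.
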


\begin{proof}
Set $b:=\bigvee Q$.
We first prove the following statement:
 \begin{equation}\label{Eq:pnleqp'veea}
 p\nleq x\vee b\text{ for each }x<\ol{p}\,.
 \end{equation}
 Suppose that $p\leq x\vee b$. As $x<\ol{p}$ and $a\leq b$, it follows
 from  Zassenhaus' Butterfly Lemma (cf. Gr\"atzer
 \cite[Theorem~IV.1.13]{GLT2}) that the sublattice of~$L$ generated by $\set{x,\ol{p},a,b}$ is distributive. Hence, setting
 $u:=\ol{p}\wedge(x\vee b)=x\vee(\ol{p}\wedge b)$, we get
 $p\leq(\ol{p}\vee a)\wedge(x\vee b)=u\vee a$, with
 $u\leq \ol{p}$. By the minimality
 assumption on~$\ol{p}$ in~(i), it follows that $u=\ol{p}$, thus,
 as~$\ol{p}$ is \jirr\ and $x<\ol{p}$, we get $\ol{p}\leq b$. As
 $a\leq b$, it follows from~(i) that $p\leq\ol{p}\vee a\leq b=\bigvee
 Q$, thus, by~(iii), $\ol{p}\in Q$, which contradicts~(ii). This
 completes the proof of~\eqref{Eq:pnleqp'veea}.
 
 Now let $q\in Q$, set $Q':=Q\setminus\set{q}$ and
 $a':=\bigvee Q'$ (we may set it equal to a new zero
 element of~$L$ in case $Q'=\es$ and~$L$ has no zero), we must prove
 the statement
 \begin{equation}\label{Eq:pnleqp0veey}
 p\nleq\ol{p}\vee y\vee a'\text{ for each }y<q\,.
 \end{equation}
Suppose that $p\leq\ol{p}\vee y\vee a'$. As $p\leq\ol{p}\vee a$, it follows from the modularity of~$L$ that $p\leq(\ol{p}\vee a)\wedge(\ol{p}\vee y\vee a')=\ol{p}\vee(a\wedge(\ol{p}\vee y\vee a'))$. As $a\wedge(\ol{p}\vee y\vee a')\leq a$, it follows from the minimality assumption on~$a$ in~(i) that $a\leq\ol{p}\vee y\vee a'$. As $a\leq\bigvee Q=q\vee a'$, it follows from the modularity of~$L$ that
 \[
 a\leq(q\vee a')\wedge(\ol{p}\vee y\vee a')=
 \bigl(q\wedge(\ol{p}\vee y\vee a')\bigr)\vee a'
 =\bigl(q\wedge(\ol{p}\vee y\vee a')\bigr)\vee\bigvee Q'\,,
 \]
 with $q\wedge(\ol{p}\vee y\vee a')\leq q$. Hence, as~$Q\in\mcov(a)$,
 we get that $q\leq\ol{p}\vee y\vee a'$. As $y<q$, it follows from the
 modularity of~$L$ that $q=q\wedge(\ol{p}\vee y\vee
 a')=y\vee(q\wedge(\ol{p}\vee a'))$. As~$q$ is \jirr\ and $y<q$, it
 follows that $q\leq\ol{p}\vee a'$, thus $a\leq\bigvee Q=q\vee
 a'\leq\ol{p}\vee a'$, and thus $p\leq\ol{p}\vee a\leq\ol{p}\vee
 a'=\ol{p}\vee\bigvee Q'$, which contradicts the combination of~(ii)
 and~(iii). This completes the proof of~\eqref{Eq:pnleqp0veey}.
 
 The combination of~\eqref{Eq:pnleqp'veea} and~\eqref{Eq:pnleqp0veey}
 (the latter being stated for each $q\in Q$) means that
 $\set{\ol{p}}\cup Q\in\tcov(p)$. As moreover $\set{\ol{p}}\cup Q\subseteq\J(L)$,
 it follows from Corollary~\ref{C:SumRef} that $\set{\ol{p}}\cup Q\in\mcov(p)$.
\end{proof}

As in Herrmann, Pickering, and Roddy~\cite{HPR}, the \emph{collinearity relation} in a modular lattice~$L$ is defined on the \jirr\ elements of~$L$ by letting $\col(p,q,r)$ hold if~$p$, $q$, $r$ are pairwise incomparable and $p\vee q=p\vee r=q\vee r$.

\begin{lemma}\label{L:Col2Min}
Let $p$, $q$, $r$ be \jirr\ elements in a modular lattice~$L$. Then $\col(p,q,r)$ implies that $p\leq q\vee r$ is a tight cover.
\end{lemma}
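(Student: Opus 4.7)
The plan is to unfold the definitions and reduce to a single symmetric check, then use modularity plus \jirry. First, since $q,r\in\J(L)$, both are nonzero (the paper's definition of \jirry\ excludes the zero element), so $\set{q,r}\subseteq L^-$. From $\col(p,q,r)$ we have $p\vee q=p\vee r=q\vee r$, and in particular $p\leq q\vee r$, so $\set{q,r}$ is already a join-cover of~$p$. Hence what must be shown is only the ``tightness'' part: for each $u\in\set{q,r}$ and each $x<u$, we have $p\nleq x\vee v$ where $v$ is the other element of $\set{q,r}$.

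By the symmetry of collinearity in~$(q,r)$, it suffices to prove that $p\nleq x\vee r$ whenever $x<q$. Assume, towards a contradiction, that $p\leq x\vee r$ for some $x<q$. Then $p\vee r\leq x\vee r$, and because $p\vee r=q\vee r$ this gives $q\leq x\vee r$. Since $x\leq q$, the modular law applies and yields
\[
q=q\wedge(x\vee r)=x\vee(q\wedge r).
\]
As $q$ is \jirr\ and $x<q$, this forces $q\leq q\wedge r$, that is, $q\leq r$; but this contradicts the pairwise incomparability of~$p,q,r$ required by $\col(p,q,r)$. The symmetric argument (swapping the roles of $q$ and $r$, using $p\vee q=q\vee r$) handles the case $p\leq y\vee q$ with $y<r$.

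There is no genuine obstacle here: the only ingredients are the definition of tight cover, the identity $p\vee q=p\vee r=q\vee r$, a single application of the modular law with $x\leq q$, and \jirry\ of~$q$ (and symmetrically of~$r$). The step to watch is making sure modularity is applied in the form $x\vee(q\wedge r)=q\wedge(x\vee r)$, which requires $x\leq q$; this is guaranteed by the hypothesis $x<q$.
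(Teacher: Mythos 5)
Your proof is correct and follows essentially the same route as the paper's: derive $q\leq x\vee r$ from $p\leq x\vee r$ and collinearity, apply the modular law with $x\leq q$ to get $q=x\vee(q\wedge r)$, and conclude $q\leq r$ from \jirry, contradicting incomparability. The extra bookkeeping you include (nonzeroness of $q,r$ and the explicit symmetry reduction) is accurate and left implicit in the paper.
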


\begin{proof}
  Let, say, $p\leq x\vee r$ with $x<q$. As $q\leq p\vee r$, it
  follows that $q\leq x\vee r$. Using $x<q$ and the modularity
  of~$L$, we obtain $q=x\vee(q\wedge r)$, and thus, by the \jirry\ of~$q$, we
  get $q\leq r$, \contr.
\end{proof}

\begin{lemma}\label{L:Irr2Min}
Let $n$ be a positive integer and let $p$, $a_1$, \dots, $a_n$ be elements in a modular spatial lattice~$L$ with~$p$ a point and $p\leq a_1\vee\cdots\vee a_n$ irredundantly. Then there are points $q_1\leq a_1$, \dots, $q_n\leq a_n$ such that $\set{q_1,\dots,q_n}$ is a minimal cover of~$p$.
\end{lemma}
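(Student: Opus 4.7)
The plan is to prove Lemma~\ref{L:Irr2Min} by induction on~$n$, using at each step a single-replacement lemma that produces one point at a time. For $n=1$ the statement is trivial: take $q_1:=p$, so $\set{p}\in\mcov(p)$ by inspection. For $n\geq 2$, the engine of the induction is the following single-replacement claim: given a point~$p$ and $a,b\in L$ with $p\leq a\vee b$, $p\nleq a$, and $p\nleq b$, there exists a point $\bar p\leq a$ satisfying $p\leq\bar p\vee b$ and $p\nleq\bar p_*\vee b$.

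To prove this claim, set $a':=(p\vee b)\wedge a$; modularity (applied with $b\leq p\vee b\leq a\vee b$) yields $a'\vee b=p\vee b$. For any $x\in[b,p\vee b]$, the modular law gives $x=(x\wedge p)\vee b$, and since~$p$ is a point we obtain the dichotomy: either $x\wedge p=p$ (so $x=p\vee b$), or $x\wedge p\leq p_*$ (so $x\leq p_*\vee b$). Apply this to $x=\bar p\vee b$ as $\bar p$ ranges over points $\leq a'$: if no such $\bar p$ gives $\bar p\vee b=p\vee b$, then every point below~$a'$ lies below $p_*\vee b$, so by spatiality $a'\leq p_*\vee b$, whence $p\leq p\vee b=a'\vee b\leq p_*\vee b$; but then modularity yields $p=p\wedge(p_*\vee b)=p_*\vee(p\wedge b)=p_*$ (using that $p\nleq b$ forces $p\wedge b\leq p_*$), a contradiction. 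Hence some point $\bar p\leq a$ satisfies $\bar p\vee b=p\vee b$, and in particular $\bar p\nleq b$. For tightness at~$\bar p$, if $p\leq\bar p_*\vee b$ then $\bar p\vee b=p\vee b\leq\bar p_*\vee b$, so $\bar p_*\vee b=\bar p\vee b$; the modular isomorphism $[b,\bar p\vee b]\cong[\bar p\wedge b,\bar p]$ via $x\mapsto x\wedge\bar p$ sends $\bar p_*\vee b$ to $\bar p_*\vee(\bar p\wedge b)=\bar p_*$ (using $\bar p\wedge b\leq\bar p_*$) and $\bar p\vee b$ to $\bar p$, forcing $\bar p_*=\bar p$, a contradiction.

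For the main induction, I construct, for $k=0,1,\dots,n$, points $q_1,\dots,q_k$ with $q_i\leq a_i$ such that the set $E_k:=\set{q_1,\dots,q_k,a_{k+1},\dots,a_n}$ covers~$p$ and $p\nleq(q_j)_*\vee\bigvee(E_k\setminus\set{q_j})$ for each $j\leq k$. The base $k=0$ is the hypothesis. For the step $k\to k+1$, apply the single-replacement claim with $a:=a_{k+1}$ and $b:=\bigvee(E_k\setminus\set{a_{k+1}})$: the preconditions $p\nleq a_{k+1}$ and $p\nleq b$ both follow from the original irredundancy, since for $n\geq 2$ each~$a_{k+1}$ is bounded above by $\bigvee_{j\neq i}a_j$ for some $i\neq k+1$, and $q_i\leq a_i$ forces $b\leq\bigvee_{j\neq k+1}a_j$. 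The claim supplies $q_{k+1}\leq a_{k+1}$ with $p\leq q_{k+1}\vee b$ and $p\nleq(q_{k+1})_*\vee b$; tightness at each earlier $q_j$ is preserved because replacing $a_{k+1}$ by $q_{k+1}\leq a_{k+1}$ only shrinks $\bigvee(E_{k+1}\setminus\set{q_j})$. Moreover $q_{k+1}\nleq b$ (else $p\leq b$), so $q_{k+1}$ is distinct from $q_1,\dots,q_k$. Taking $k=n$, the set $\set{q_1,\dots,q_n}$ belongs to $\tcov(p)$ and consists of points (hence of \jirr\ elements), so by Corollary~\ref{C:SumRef} it lies in $\mcov(p)$.

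The main obstacle is the single-replacement claim itself: the dichotomy on $[b,p\vee b]$ crucially uses modularity (both to rewrite $x=(x\wedge p)\vee b$ and through the modular interval isomorphism used in the tightness argument), whereas the extraction of a point $\bar p\leq a'$ realizing $\bar p\vee b=p\vee b$ crucially uses spatiality (to rule out the bad case by joining enough points below~$a'$). Neither assumption alone suffices.
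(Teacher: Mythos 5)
Your proof is correct, and it takes a genuinely different route from the paper's. The paper reduces to the case $n=2$, which it settles by citing Herrmann--Pickering--Roddy's Lemma~2.2 (existence of points $q_1\leq a_1$, $q_2\leq a_2$ with $\col(p,q_1,q_2)$) together with Lemma~\ref{L:Col2Min}; its induction step then splits the cover as $a_0\vee(a_1\vee\cdots\vee a_n)$, produces a point $q$ below the second block, recurses on $q$, and glues the two minimal covers together via the technical Lemma~\ref{L:MinCovMod}, whose proof invokes Zassenhaus's Butterfly Lemma. You instead prove a self-contained single-point-replacement claim---for $p\leq a\vee b$ with $p\nleq b$, modularity yields the dichotomy on the interval $[b,p\vee b]$ and spatiality extracts a point $\bar p\leq a$ with $p\leq\bar p\vee b$ and $p\nleq\bar p_*\vee b$---and iterate it once per coordinate, maintaining \emph{tightness} (rather than minimality) at all previously replaced coordinates, which is preserved for free because the join of the complementary set only shrinks. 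Both arguments funnel through Corollary~\ref{C:SumRef} at the very end to convert a tight cover by points into a minimal one. What your route buys is self-containedness: it effectively reproves the HPR two-element lemma instead of citing it, and it bypasses Lemma~\ref{L:MinCovMod} and the Butterfly Lemma entirely. What it gives up is the explicit collinearity structure in the $n=2$ case, which the paper's argument exhibits but which the statement of the lemma does not require. I checked the delicate points of your argument---the identity $a'\vee b=p\vee b$, the dichotomy on $[b,p\vee b]$, the use of spatiality without completeness (only joins guaranteed by spatiality are formed), the verification of the preconditions $p\nleq a_{k+1}$ and $p\nleq b$ from the original irredundancy, and the pairwise distinctness of the $q_i$---and they all go through.
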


\begin{proof}
We argue by induction on~$n$. For $n=1$ the result is trivial. For $n=2$, it follows from Herrmann, Pickering, and Roddy~\cite[Lemma~2.2]{HPR} that there are points $q_1\leq a_1$ and $q_2\leq a_2$ such that $\col(p,q_1,q_2)$ holds. By Lemma~\ref{L:Col2Min}, it follows that $\set{q_1,q_2}\in\mcov(p)$.

Now suppose that the result holds for~$n$ and suppose that
 \begin{equation}\label{Eq:pleqa02anirr}
 p\leq a_0\vee a_1\vee\cdots\vee a_n\text{ irredundantly}\,,
 \end{equation}
with~$p$ a point. By the case $n=2$, there are points $q_0\leq a_0$ and $q\leq a_1\vee\cdots\vee a_n$ such that $\set{q_0,q}\in\mcov(p)$. If $q\leq\bigvee_{i\in[1,n]\setminus\set{j}}a_i$ for some $j\in[1,n]$, then, as $p\leq q_0\vee q\leq a_0\vee q$, we obtain that $p\leq\bigvee_{i\in[0,n]\setminus\set{j}}a_i$, \contr; hence we obtain that
 \begin{equation}\label{Eq:CovqIrred}
 q\leq a_1\vee\cdots\vee a_n\text{ irredundantly}\,.
 \end{equation}
By the induction hypothesis, there are points $q_1\leq a_1$, \dots, $q_n\leq a_n$ such that the set $Q:=\set{q_1,\dots,q_n}$ belongs to~$\mcov(q)$. If $p\leq\bigvee Q$, then $p\leq\bigvee_{1\leq i\leq n}a_i$, which contradicts~\eqref{Eq:pleqa02anirr}; hence $p\nleq\bigvee Q$, but $p\leq q_0\vee q\leq q_0\vee\bigvee Q$, and thus $q_0\notin Q$. If $p\leq q_0\vee\bigvee(Q\setminus\set{q_j})$ for some $j\in[1,n]$, then $p\leq\bigvee_{i\in[0,n]\setminus\set{j}}a_i$, \contr; hence $p\leq\bigvee(\set{q_0}\cup Q)$ irredundantly. Now it follows from Lemma~\ref{L:MinCovMod} that $\set{q_0}\cup Q=\set{q_0,q_1,\dots,q_n}$ belongs to~$\mcov(p)$.
\end{proof}

\begin{theorem}\label{T:ModSp2StrSp}
Every modular spatial lattice is strongly spatial.
\end{theorem}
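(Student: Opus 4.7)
The plan is to verify the three clauses of Definition~\ref{D:Seed} for the subset $\Sigma := \Jc(L)$ of a modular spatial lattice $L$. The first two---join-denseness and the inclusion $\Jc(L)\subseteq\J(L)$---are immediate: the former is just the definition of spatiality, and the latter is trivial. So the entire content of the theorem lies in showing that every finite cover of a point can be refined to a minimal cover by points.

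Fix $p\in\Jc(L)$ and $X\in\cov(p)$. First I would reduce $X$ to an irredundant subcover $X' = \{a_1,\dots,a_n\} \subseteq X$ of $p$ (which is automatic for finite covers: simply discard redundant members one by one). Then $p\leq a_1\vee\cdots\vee a_n$ irredundantly with $p$ a point, which is exactly the hypothesis of Lemma~\ref{L:Irr2Min}. Applying that lemma yields points $q_1\leq a_1$, \dots, $q_n\leq a_n$ such that $I:=\{q_1,\dots,q_n\}\in\mcov(p)$. By construction, $I\subseteq\Jc(L)$, and each $q_i\leq a_i$ shows that $I\leref X'\subseteq X$, so $I\leref X$. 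This is exactly what the seed condition requires for $p$ and $X$.

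There is no real obstacle here, since all the substantive work was packaged into Lemma~\ref{L:Irr2Min} (whose proof, in turn, rests on the collinearity result of Herrmann--Pickering--Roddy, on Lemma~\ref{L:MinCovMod}, and ultimately on Corollary~\ref{C:SumRef} converting tightness into minimality in the \jirr\ setting). Thus the theorem is essentially a direct corollary of Lemma~\ref{L:Irr2Min} once one notes that modular spatial lattices allow us to pull the refinement of an irredundant cover down into the set of points while preserving the combinatorial structure of a minimal cover.
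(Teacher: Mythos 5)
Your argument is correct and coincides with the paper's own proof: both pass from $X$ to an irredundant subcover and then invoke Lemma~\ref{L:Irr2Min} to produce a minimal cover by points refining it. The extra bookkeeping you supply (checking the seed clauses, noting $I\leref X$) is implicit in the paper's two-line version but adds nothing substantively different.
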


\begin{proof}
Let $p$ be a point in a modular spatial lattice~$L$ and let $X\in\cov(p)$. There exists a subset~$Y$ of~$X$ such that $p\leq\bigvee Y$ irredundantly. By Lemma~\ref{L:Irr2Min}, $Y$ can be refined to an element of~$\mcov(p)$.
\end{proof}

As Herrmann, Pickering, and Roddy prove in~\cite{HPR} that every modular lattice can be embedded, within its variety and zero-preservingly in case there is a zero, into some algebraic and spatial lattice, we thus obtain the following.

\begin{corollary}\label{C:ModSp2StrSp}
Every modular lattice~$L$ can be embedded, within its variety, into some algebraic strongly spatial lattice. If~$L$ has a zero, then the embedding can be taken zero-preserving.
\end{corollary}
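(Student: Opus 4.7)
The plan is to combine the two ingredients already assembled: the Herrmann--Pickering--Roddy embedding theorem from \cite{HPR}, which provides, for any modular lattice $L$, an embedding into an algebraic and spatial lattice $\ol{L}$ that lies in the variety generated by $L$ (and which can be made zero-preserving when $L$ has a zero), together with Theorem~\ref{T:ModSp2StrSp}, which upgrades spatial to strongly spatial in the modular setting.

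Concretely, first I would invoke the Herrmann--Pickering--Roddy result to obtain an embedding $\varphi\colon L\hookrightarrow\ol{L}$ into an algebraic and spatial lattice $\ol{L}$ lying in the variety generated by $L$, with $\varphi$ zero-preserving whenever $L$ has a zero. Since $L$ is modular and $\ol{L}$ lies in the variety of $L$, the lattice $\ol{L}$ is itself modular. Next I would apply Theorem~\ref{T:ModSp2StrSp} to $\ol{L}$: being modular and spatial, $\ol{L}$ is strongly spatial. Thus $\ol{L}$ is at once algebraic, strongly spatial, modular, contained in the variety generated by $L$, and receives the required (possibly zero-preserving) embedding of $L$.

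There is essentially no obstacle beyond checking that the three conditions (algebraicity, strong spatiality, and being in the variety of $L$) are simultaneously satisfied, but each of them is guaranteed by exactly one of the inputs: algebraicity and variety membership from \cite{HPR}, and strong spatiality from Theorem~\ref{T:ModSp2StrSp}. The statement about preservation of the zero is immediate from the corresponding clause in \cite{HPR}, so no additional work is needed.
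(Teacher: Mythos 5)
Your proposal is correct and follows exactly the paper's route: the corollary is obtained by composing the Herrmann--Pickering--Roddy embedding into an algebraic spatial lattice within the variety (zero-preserving when applicable) with Theorem~\ref{T:ModSp2StrSp}, noting that the target lattice is modular because it lies in the variety of~$L$. Nothing is missing.
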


\section{A class of lattices with a \bdl\ parameter}
\label{S:KL(D)}

In this section we shall prepare the ground for the construction of a lattice without any variety-preserving extension to an algebraic and spatial one, cf. Theorem~\ref{T:NoVarEmb}.

\begin{notation}\label{Not:KL(D)}
Let~$D$ be a nontrivial \bdl. We set
 \[
 x^-:=\begin{cases}
 0\,,&\text{if }x<1\\
 1\,,&\text{if }x=1
 \end{cases}\,,
 \quad\text{for each }x\in D\,.
 \]
Furthermore, we set
 \begin{align*}
 \rL(D)&:=\two\times D\times\two\times\two\,,\\
 \rK(D)&:=\setm{(x_0,x_1,x_2,x_3)\in\rL(D)}
 {(\forall i<j<k\text{ in }\set{0,1,2,3})(x_i^-\wedge x_k^-\leq x_j)}\,,
 \end{align*}
and we define a map $\gamma\colon\rL(D)\to\rL(D)$ by setting
 \[
 \gamma(x)_j:=x_j\vee\bigvee\famm{x_i^-\wedge x_k^-}{i<j<k}\,,
 \quad\text{for each }j\in\set{0,1,2,3}\,.
 \]
We shall denote by $\veec$ the join operation in~$\rL(D)$. Hence
 \begin{equation}\label{Eq:veec}
 (x\veec y)_i=x_i\vee y_i\,,\quad\text{for all }x,y\in\rL(D)\text{ and all }i<4\,.
 \end{equation}
We shall denote by~$q_i$ the element of~$\rL(D)$ with~$1$ at the $i$-th place and~$0$ elsewhere, for each $i\in\set{0,1,2,3}$. Furthermore, we set $xq_1:=(0,x,0,0)$, for each $x\in D$. Obviously $0q_1=0$, $1q_1=q_1$, and every element either of the form~$q_i$ or of the form $xq_1$ belongs to~$\rK(D)$.
\end{notation}

In Lemma~\ref{L:gammclos} and Corollary~\ref{C:gammclos} we shall fix a nontrivial \bdl~$D$.

\begin{lemma}\label{L:gammclos}
The element $\gamma(x)$ is the least element of~$\rK(D)$ above~$x$, for each $x\in\rL(D)$.
\end{lemma}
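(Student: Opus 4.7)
The plan is to verify three conditions: $\gamma(x) \geq x$, $\gamma(x) \in \rK(D)$, and $\gamma(x) \leq y$ for every $y \in \rK(D)$ with $y \geq x$. The first is immediate from the definition, since each coordinate $\gamma(x)_j$ is obtained from~$x_j$ by joining with additional terms.

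For the minimality clause, fix $y \in \rK(D)$ with $y \geq x$; I would show $\gamma(x)_j \leq y_j$ for each~$j$. The key observation is that the operation $z \mapsto z^-$ is order-preserving into~$\two$: if $x_i^- = 1$ then $x_i$ equals the top of its coordinate, hence so does $y_i \geq x_i$, so $y_i^- = 1$. Therefore $x_a^- \wedge x_b^- \leq y_a^- \wedge y_b^- \leq y_j$ whenever $a < j < b$, the second inequality being the definition of~$\rK(D)$. Combining with $x_j \leq y_j$ yields $\gamma(x)_j \leq y_j$.

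The main obstacle is showing $\gamma(x) \in \rK(D)$, namely $\gamma(x)_i^- \wedge \gamma(x)_k^- \leq \gamma(x)_j$ for every $i < j < k$ in $\set{0,1,2,3}$. Here the essential point is that all the summands $x_a^- \wedge x_b^-$ lie in~$\two$ (embedded into~$D$ at position~$1$ through $0 \mapsto 0_D$, $1 \mapsto 1_D$), so $\gamma(x)_j^- = 1$ is equivalent to $\gamma(x)_j$ being the top of its coordinate lattice. Setting $I := \setm{i \in \set{0,1,2,3}}{x_i^- = 1}$, unfolding the definition of~$\gamma$ shows that $\gamma(x)_j^- = 1$ if and only if either $j \in I$, or there exist $a, b \in I$ with $a < j < b$; equivalently, if and only if $\min I \leq j \leq \max I$ (vacuously false when $I = \es$). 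Given this interval characterization, the required inequality follows at once: if $i < j < k$ with $\gamma(x)_i^- = \gamma(x)_k^- = 1$, then both~$i$ and~$k$ lie in $[\min I, \max I]$, hence so does~$j$, so $\gamma(x)_j$ equals the top of its coordinate and the inequality is trivial.
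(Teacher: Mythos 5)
Your proposal is correct and follows essentially the same route as the paper: the paper also reduces everything to checking that $\gamma(x)\in\rK(D)$ (minimality being immediate from the isotonicity of~$\gamma$ and the fact that $\gamma$ fixes exactly the elements of~$\rK(D)$, which is what your coordinatewise computation amounts to), and its verification that $\gamma(x)_j$ is the top whenever $\gamma(x)_i$ and $\gamma(x)_k$ are, for $i<j<k$, produces indices $i'\leq i$ and $k'\geq k$ with $x_{i'}^-=x_{k'}^-=1$ --- precisely your observation that the set $I$ of such indices makes $\set{j}{\gamma(x)_j^-=1}$ the interval $[\min I,\max I]$.
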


\begin{proof}
It is obvious that~$\gamma$ is isotone, that $\gamma(x)\geq x$ for each $x\in\rL(D)$, and that $\gamma(x)=x$ if{f} $x\in\rK(D)$. Hence it suffices to prove that $y:=\gamma(x)$ belongs to~$\rK(D)$, for each $x\in\rL(D)$. Let $i<j<k$ in $\set{0,1,2,3}$, we must prove that if $y_i=y_k=1$, then $y_j=1$. {}From $y_i=1$ it follows that there exists $i'\leq i$ such that $x_{i'}=1$. Similarly, from $y_k=1$ it follows that there exists $k'\geq k$ such that $x_{k'}=1$. {}From $i'<j<k'$ it follows that $y_j\geq x_{i'}^-\wedge x_{k'}^-=1$.
\end{proof}

As $\rL(D)$, endowed with the componentwise ordering, is a bounded lattice, we obtain the following.

\begin{corollary}\label{C:gammclos}
The set~$\rK(D)$, endowed with the componentwise ordering, is a closure system in~$\rL(D)$. In particular, it is a bounded lattice, and also a meet-sub\-semi\-lat\-tice of~$\rL(D)$.
\end{corollary}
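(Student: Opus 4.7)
The plan is to read Corollary~\ref{C:gammclos} as a direct consequence of Lemma~\ref{L:gammclos}. The ambient lattice $\rL(D) = \two\times D\times\two\times\two$ is itself a bounded lattice, being a product of bounded lattices, with componentwise operations; in particular joins in $\rL(D)$ are exactly $\veec$ as recorded in \eqref{Eq:veec}. Lemma~\ref{L:gammclos} provides, for every $x\in\rL(D)$, a least element of $\rK(D)$ above $x$, namely $\gamma(x)$. This is precisely the statement that $\gamma$ is a closure operator on $\rL(D)$ and that $\rK(D)$ is its set of fixed points. By the standard theory of closure systems on a bounded lattice, $\rK(D)$ is then automatically closed under arbitrary meets taken in $\rL(D)$, contains the top element of $\rL(D)$, and inherits the structure of a bounded lattice in which meets are computed in $\rL(D)$ and joins are given by $x\vee y:=\gamma(x\veec y)$.

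For the sake of self-containment I would spell out the two verifications that together exhibit $\rK(D)$ as a closure system inside $\rL(D)$. First, $(1,1,1,1)\in\rK(D)$ holds trivially, since every coordinate is $1$. Second, $\rK(D)$ is closed under componentwise binary meet: using the identity $(a\wedge b)^-=a^-\wedge b^-$ in $D$ (both sides equal $1$ iff $a=b=1$ and $0$ otherwise), one computes, for $x,y\in\rK(D)$ and $i<j<k$ in $\set{0,1,2,3}$,
\[
(x\wedge y)_i^-\wedge(x\wedge y)_k^-=(x_i^-\wedge x_k^-)\wedge(y_i^-\wedge y_k^-)\leq x_j\wedge y_j=(x\wedge y)_j,
\]
so $x\wedge y\in\rK(D)$. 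The same identity, applied coordinatewise, yields closure under arbitrary meets whenever $D$ is complete, but only the binary version is needed here.

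I do not expect any real obstacle: the corollary is essentially a bookkeeping consequence of Lemma~\ref{L:gammclos}. The one minor point worth noting en passant is that the bottom of $\rK(D)$ is the bottom $(0,0,0,0)$ of $\rL(D)$, since that element belongs to $\rK(D)$ vacuously (each $x_i^-$ equals $0$), so the inherited bounded-lattice structure is the natural one and no separate argument for the existence of a least element is required.
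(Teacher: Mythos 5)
Your proposal is correct and follows the same route as the paper, which derives the corollary in one line from Lemma~\ref{L:gammclos} together with the observation that $\rL(D)$ is a bounded lattice; your extra direct verifications (the identity $(a\wedge b)^-=a^-\wedge b^-$, membership of the top and bottom) are accurate but not needed once $\gamma$ is known to be a closure operator with fixed-point set $\rK(D)$.
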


\begin{lemma}\label{L:K(B)SI}
The following statements hold, for any nontrivial Boolean lattice~$B$.
\begin{enumerate}
\item The equality $\con_{\rK(B)}(0,q_2)=\con_{\rK(B)}(0,xq_1)$ holds for each $x\in B\setminus\set{0}$.

\item The lattice~$\rK(B)$ is subdirectly irreducible, with minimal nonzero congruence $\con_{\rK(B)}(0,q_1)$.
\end{enumerate}
\end{lemma}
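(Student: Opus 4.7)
\smallskip

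\noindent\textbf{Plan.} I would derive (ii) as a consequence of (i), with the key ingredient being the nontrivial direction $\con_{\rK(B)}(0,q_1)\subseteq\con_{\rK(B)}(0,xq_1)$. Two structural features of $\rK(B)$ drive the argument: meet is computed componentwise (Corollary~\ref{C:gammclos}), so meeting with $q_i$ isolates the $i$th coordinate; whereas join in $\rK(B)$ invokes $\gamma$, which genuinely expands middle coordinates---in particular $q_0\vee q_2=(1,1,1,0)$ and $q_1\vee q_3=(0,1,1,1)$, whereas for $y<1$ in $B$ neither $q_0\vee yq_1=(1,y,0,0)$ nor $yq_1\vee q_3=(0,y,0,1)$ undergoes any such expansion.

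For the heart of (i), if $x=1$ the claim is trivial. Otherwise let $y$ be the Boolean complement of $x$ in $B$, so $0<y<1$, and set $\theta:=\con_{\rK(B)}(0,xq_1)$. Joining the generator $0\equiv xq_1$ with $yq_1$ (using $x\vee y=1$) yields $q_1\equiv yq_1\pmod\theta$. Joining this further with $q_3$ exposes the $\gamma$-asymmetry: $q_1\vee q_3=(0,1,1,1)$ while $yq_1\vee q_3=(0,y,0,1)$, so $(0,1,1,1)\equiv(0,y,0,1)\pmod\theta$; meeting with $q_2$ collapses to $(0,q_2)\in\theta$. A dual step---joining $0\equiv q_2$ with $q_0$, so that $q_0\equiv(1,1,1,0)\pmod\theta$ by $\gamma$-expansion, and meeting with $q_1$---then gives $(0,q_1)\in\theta$. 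The remaining inclusions for (i) are routine: $\con(0,xq_1)\subseteq\con(0,q_1)$ by meeting $0\equiv q_1$ with $xq_1$, and $\con(0,q_2)=\con(0,q_1)$ by running the last two steps with $q_2$ in place of $xq_1$, together with the symmetric derivation of $(0,q_2)\in\con(0,q_1)$ via $q_3$.

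For (ii), let $\theta$ be a nonzero congruence and pick $(u,v)\in\theta$ with $u<v$. If $u_0<v_0$ in $\two$, meet with $q_0$ to place $(0,q_0)\in\theta$; joining with $q_2$ (using $q_0\vee q_2=(1,1,1,0)$) and meeting with $q_1$ delivers $(0,q_1)\in\theta$. The case $u_3<v_3$ is symmetric, passing through $(0,q_3)\in\theta$ and then $(0,q_2)\in\theta$ via $q_1\vee q_3=(0,1,1,1)$. Otherwise $u$ and $v$ agree on coordinates $0$ and $3$; if $u_2<v_2$, meet with $q_2$ and finish as above, and if $u_2=v_2$ and $u_1<v_1$ in $B$, meet with $q_1$ to obtain $(u_1q_1,v_1q_1)\in\theta$, then set $z:=v_1\wedge\lnot u_1\in B$ (nonzero since $u_1<v_1$ in a Boolean lattice) and meet with $zq_1$ to produce $(0,zq_1)\in\theta$; part~(i) closes the argument. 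The principal obstacle is the derivation in (i): naive substitutions starting from $0\equiv xq_1$ do not collapse $q_1$, and one must both invoke the Boolean complement of $x$ and exploit the asymmetric $\gamma$-expansion of $q_1\vee q_3$ versus $yq_1\vee q_3$---this is the one place where the ``Boolean'' hypothesis on $B$ and the precise closure-style definition of $\rK$ are genuinely used.
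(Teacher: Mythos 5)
Your proof is correct and follows essentially the same route as the paper's: the same key computations ($q_0\vee q_2=(1,1,1,0)$ and $q_1\vee q_3=(0,1,1,1)$ versus the unexpanded $yq_1\vee q_3=(0,y,0,1)$ for $y<1$, the componentwise meets with the $q_i$, and the Boolean complement of $x$) drive both arguments. The only cosmetic difference is in part (ii), where the paper first reduces to a comparable pair below a single $q_i$ via $z=\bigvee_{i<4}(z\wedge q_i)$ while you case directly on which coordinate of $u<v$ increases; this changes nothing of substance.
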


\begin{proof}
(i). Let~$\theta$ be a congruence of~$\rK(B)$. Suppose first that $q_2\equiv_{\theta}0$. {}From $q_1\leq q_0\vee q_2$ it follows that $q_1\leq_{\theta}q_0$, but $q_1\wedge q_0=0$, thus $q_1\equiv_{\theta}0$. This implies in turn that $xq_1\equiv_{\theta}0$ for each $x\in B$. Finally suppose that $xq_1\equiv_{\theta}0$ for some $x\in B\setminus\set{0}$. Denote by~$y$ the complement of~$x$ in~$B$. {}From $q_2\leq q_1\vee q_3=xq_1\vee yq_1\vee q_3$ it follows that $q_2\leq_{\theta}yq_1\vee q_3$. {}From $y<1$ it follows that $yq_1\vee q_3=(0,y,0,1)$, thus $q_2\wedge(yq_1\vee q_3)=0$, and thus $q_2\equiv_{\theta}0$.

(ii). It suffices to prove that $q_1\equiv_{\theta}0$ for each nonzero congruence~$\theta$ of~$\rK(B)$. There are $x<y$ in~$\rK(B)$ such that $x\equiv_{\theta}y$. As $z=\bigvee_{i<4}(z\wedge q_i)$ for each $z\in\rK(B)$ we may assume that $y\leq q_i$ for some $i<4$. If $i=2$ then $x=0$ and $y=q_2$, thus $q_2\equiv_{\theta}0$, and thus, by~(i), $q_1\equiv_{\theta}0$ and we are done. If $i=1$, then, as~$B$ is Boolean, there exists $z\in\rK(B)\dnw q_1$ such that $x\vee z=y$ while $x\wedge z=0$; observe that $0<z\leq q_1$ and $z\equiv_{\theta}0$. It follows from~(i) above that $q_1\equiv_{\theta}0$. If $i=0$ then $q_0\equiv_{\theta}0$, but $q_1\leq q_0\vee q_2$, thus $q_1\leq_{\theta}q_2$, but $q_1\wedge q_2=0$, thus $q_1\equiv_{\theta}0$. If $i=3$ then~$q_3\equiv_{\theta}0$, thus, as $q_2\leq q_1\vee q_3$, we get $q_2\leq_{\theta}q_1$, but $q_1\wedge q_2=0$, thus $q_2\equiv_{\theta}0$, and thus, by~(i) above, $q_1\equiv_{\theta}0$.
\end{proof}

\begin{lemma}\label{L:LocFin}
The lattice~$\rK(D)$ is locally finite, for each nontrivial \bdl~$D$. In fact, $\rK(D)$ generates a locally finite lattice variety.
\end{lemma}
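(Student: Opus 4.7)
\medskip

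\noindent The statement has two parts; the second implies the first, but a direct proof of the first is informative.

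For the local finiteness of $\rK(D)$, let $\Sigma\subseteq\rK(D)$ be finite and denote by $\langle\Sigma\rangle$ the sublattice of $\rK(D)$ it generates. An inspection of the definition of~$\gamma$ reveals that $\gamma(x)_0=x_0$ and $\gamma(x)_3=x_3$, while at positions~$1$ and~$2$ the closure only adds elements of~$\set{0,1}$. Combined with the fact that meets in~$\rK(D)$ are coordinatewise, this shows that the members of $\langle\Sigma\rangle$ have coordinates~$0,2,3$ ranging over~$\two$ and coordinate~$1$ lying in the sublattice $D_0\subseteq D$ generated by $\set{a_1\mid a\in\Sigma}\cup\set{0,1}$. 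Since~$D$ is distributive and every finitely generated sublattice of a distributive lattice is finite, $D_0$ is finite, whence $|\langle\Sigma\rangle|\leq 8|D_0|$.

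For the stronger statement, my plan is to show that the number of $n$-ary term functions $\rK(D)^n\to\rK(D)$ is finite with a bound depending only on~$n$; by Birkhoff, this ensures that the free lattice on $n$ generators in $\mathrm{HSP}(\rK(D))$ is finite. I would analyze the evaluation $t(\vec a)$ of a lattice term $t(x_1,\ldots,x_n)$ at $\vec a\in\rK(D)^n$ coordinate by coordinate. Positions~$0$ and~$3$ of~$t(\vec a)$ evaluate a lattice term in~$\two$ on the inputs' coordinates~$0$ and~$3$ respectively, giving one of finitely many monotone Boolean functions of~$n$ variables. An induction on subterms of~$t$ shows that coordinate~$1$ at each subterm is a polynomial in $(a_1^{(1)},\ldots,a_1^{(n)})$ with constants~$0$ and~$1$; since the free bounded distributive lattice~$F$ on~$n$ generators is finite, so is the set of such polynomials. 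Coordinate~$2$ at each subterm is a Boolean function of the inputs' coordinates~$0,2,3$ together with the indicators of the conditions $\phi(\vec a_1)=1$ for $\phi\in F$. Thus~$t(\vec a)$ is determined, up to finitely many choices of coordinate-$1$ polynomial, by a finite amount of Boolean data about~$\vec a$, yielding a finite bound on term functions.

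The main obstacle is that at every $\vee$-node the $\gamma$-correction at position~$2$ involves the non-lattice-theoretic operation $x\mapsto x^-$ applied to the $D$-valued coordinate~$1$, and since the top element~$1$ of~$D$ need not be join-prime, the indicator of $u_1\vee v_1=1$ is not recoverable from those of $u_1=1$ and $v_1=1$ alone, so a naive propagation of Boolean types would fail. My plan to resolve this is to track, at each subterm, the refined type recording the indicator of $\phi(\vec a_1)=1$ for every polynomial~$\phi$ in the finite set~$F$. Because each subterm's coordinate~$1$ belongs to~$F$, this refined type closes under the propagation rules along the term tree and suffices to determine the output Boolean data; the bookkeeping is elementary but slightly intricate.
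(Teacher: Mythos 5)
Your first paragraph is essentially the paper's own argument: the generated sublattice sits inside $\two\times D_0\times\two\times\two$ (equivalently, inside $\rK(D_0)$ for the $0,1$-sublattice $D_0$ of $D$ generated by the first coordinates), and $D_0$ is finite by distributivity. Where you diverge is the second statement. The paper simply observes that your bound is already \emph{uniform in $n$}: since $D_0$ is generated as a bounded lattice by $n$ elements of a distributive lattice, $|D_0|\leq 2^{2^n}$, so every $n$-generated sublattice of $\rK(D)$ has at most $2^{2^n+3}$ elements, and local finiteness of the generated variety then follows from the standard universal-algebra fact that a uniform bound on the $n$-generated subalgebras of a generating algebra forces the free algebra on $n$ generators in its variety to be finite. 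In other words, your first paragraph already contains everything needed; you only had to note that $8|D_0|$ depends on $|\Sigma|$ alone. Your alternative plan---bounding the clone of $n$-ary term operations by a coordinatewise type analysis, tracking the indicators $[\phi(\vec a_1)=1]$ for all $\phi$ in the finite free bounded distributive lattice to handle the $x\mapsto x^-$ correction at position $2$---is sound: you correctly identify that $1$ need not be join-prime in $D$ and that the refined type closes under the propagation rules, and a finite clone does imply a finite free algebra. But it is a considerably heavier route, and as written it remains a sketch with the ``slightly intricate bookkeeping'' deferred; the paper's route gets the same conclusion in one line from the bound you had already established.
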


\begin{proof}
Let $n$ be a positive integer and let $x_1,\dots,x_n\in\rK(D)$. Denote by~$C$ the $0,1$-sublattice of~$D$ generated by $\set{(x_1)_1,\dots,(x_n)_1}$. Then the sublattice of~$\rK(D)$ generated by $\set{x_1,\dots,x_n}$ is contained in~$\rK(C)$, which has at most $2^{2^n+3}$ elements. As this bound depends of~$n$ only, the local finiteness statement about the variety follows easily by using a standard argument of universal algebra.
\end{proof}

\section{Local distributivity of lattices of the form $\rK(D)$}\label{S:LocDistr}

Throughout this section we shall denote by~$\cK$ the class of all lattices of the form $\rK(B)$, for nontrivial Boolean lattices~$B$, and by~$\cV$ the lattice variety generated by~$\cK$. Furthermore, we denote by~$B_0$ the (Boolean) lattice of all subsets of~$\omega$ that are either finite or cofinite. It is in fact easy to verify that~$\cV$ is generated by $\rK(B_0)$.

\begin{lemma}\label{L:NonDistrId}
Let~$D$ be a nontrivial \bdl\ and let $a\in\rK(D)$. If $\rK(D)\dnw a$ is not distributive, then~$a$ is either equal to $(1,1,1,0)$, or to $(0,1,1,1)$, or to $(1,1,1,1)$.
\end{lemma}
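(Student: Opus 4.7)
The plan is to prove the contrapositive: if $a\in\rK(D)$ is none of $(1,1,1,0)$, $(0,1,1,1)$, $(1,1,1,1)$, then the principal ideal $\rK(D)\dnw a$ is distributive. The ambient lattice $\rL(D)=\two\times D\times\two\times\two$ is a product of distributive lattices, hence distributive, and by Corollary~\ref{C:gammclos}, meets in $\rK(D)$ coincide with componentwise meets in $\rL(D)$. It therefore suffices to prove that $\rK(D)\dnw a$ is also closed under the componentwise join $\veec$ of~$\rL(D)$: once this is established, $\rK(D)\dnw a$ is a sublattice of the distributive lattice~$\rL(D)$, and the conclusion follows.

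To verify this closure property, I would fix $x,y\in\rK(D)\dnw a$, set $z:=x\veec y$, and check directly that $z\in\rK(D)$. Unpacking the defining condition of $\rK(D)$ over the four triples $i<j<k$ in $\set{0,1,2,3}$, this reduces to the four inequalities
\[
z_0\wedge z_2\leq z_1,\quad z_0\wedge z_3\leq z_1,\quad z_0\wedge z_3\leq z_2,\quad z_1^-\wedge z_3\leq z_2,
\]
in which every entry other than~$z_1$ lies in~$\two$. I would argue by contrapositive: assume one of these inequalities fails, so that two specific entries of~$z$ are forced to equal~$1$ while a third is forced strictly below~$1$. Since $z\leq a$ componentwise, the forced values transfer to the corresponding entries of~$a$ (in particular $z_i=1$ implies $a_i=1$, and $z_1=1$ implies $a_1=1$), after which the membership $a\in\rK(D)$ pins down the remaining coordinates, revealing that $a$ is one of the three exceptional tuples. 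For example, failure of the first inequality forces $a_0=a_2=1$, whence $a_0\wedge a_2\leq a_1$ gives $a_1=1$, and the two possible values of~$a_3$ yield exactly $a=(1,1,1,0)$ or $a=(1,1,1,1)$. Failure of either of the two middle inequalities forces $a_0=a_3=1$, which via the $\rK(D)$-conditions on~$a$ forces $a=(1,1,1,1)$; failure of the last inequality forces $a_1=a_3=1$, and then $a_1^-\wedge a_3\leq a_2$ gives $a_2=1$, so that $a\in\set{(0,1,1,1),(1,1,1,1)}$.

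The main (and essentially only) obstacle is the bookkeeping across the four cases; however, each case is settled by a one-line application of the defining conditions of $\rK(D)$ to~$a$ itself, so no substantial technical difficulty arises.
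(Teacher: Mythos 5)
Your proof is correct and is essentially the contrapositive of the paper's own argument: both rest on the observation that non-distributivity of $\rK(D)\dnw a$ can only come from the componentwise join of two elements below~$a$ leaving $\rK(D)$, which forces $a_i=a_k=1$ for some $i<k$ with $k\geq i+2$ and hence, via $a\in\rK(D)$, pins~$a$ down to one of the three listed tuples. The case bookkeeping you carry out is accurate, so there is no gap.
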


\begin{proof}
  There are $x,y,z\in\rK(D)\dnw a$ such that $(x\vee y)\wedge
  z>(x\wedge z)\vee(y\wedge z)$. Set $t:=x\veec y$; observe that
  $t\leq a$. It follows from the distributivity of~$D$ that $x\vee
  y>t$ (cf.~\eqref{Eq:veec}), that is, $(x\vee y)_j>t_j$ for some
  $j<4$. By Lemma~\ref{L:gammclos}, $x\vee y=\gamma(t)$. Hence, by the
  definition of~$\gamma$, there are $i<j$ and $k>j$ such that
  $t_i=t_k=1$. {}From $t\leq a$ it follows that $a_i=a_k=1$.
  As~$a\in\rK(D)$, it follows that~$a$ takes the value~$1$ on at least
  three consecutive elements of $\set{0,1,2,3}$. The conclusion
  follows.
\end{proof}

\begin{corollary}\label{C:NonDistrIdSI}
Every subdirectly irreducible member of~$\cV$ satisfies the sentence
 \begin{equation}\label{Eq:1stOrdSent}
 (\forall\vx,\vy,\vz,\vt_1,\vt_2)\bigl(\vx\vee\vy\vee\vz<\vt_1<\vt_2
 \ \Longrightarrow\ (\vx\vee\vy)\wedge\vz=
 (\vx\wedge\vz)\vee(\vy\wedge\vz)\bigr)\,.
 \end{equation}
\end{corollary}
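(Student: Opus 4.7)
The plan combines J\'onsson's Lemma with a direct lifting argument. Since $\cV$ is a lattice variety, hence congruence-distributive, J\'onsson's Lemma yields that every subdirectly irreducible $L\in\cV$ lies in $\mathbf{HSP}_{U}(\cK)$; thus I can write $L\cong S/\theta$ for some congruence~$\theta$ on a sublattice~$S$ of an ultraproduct~$U$ of members of~$\cK$, with quotient map $\pi\colon S\twoheadrightarrow L$.

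The first substantive step is to verify that each $\rK(D)$ already satisfies~\eqref{Eq:1stOrdSent}. Given any $\vx,\vy,\vz,\vt_1,\vt_2\in\rK(D)$ with $\vx\vee\vy\vee\vz<\vt_1<\vt_2$, set $a:=\vx\vee\vy\vee\vz$: the chain exhibits at least two elements strictly above~$a$. But each of the three elements $(1,1,1,0)$, $(0,1,1,1)$, $(1,1,1,1)$ listed in Lemma~\ref{L:NonDistrId} has at most one element strictly above it (namely~$(1,1,1,1)$, when applicable), so $a$ is none of them. Therefore $\rK(D)\dnw a$ is distributive, and since $\vx,\vy,\vz\leq a$, the equation in~\eqref{Eq:1stOrdSent} follows. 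Now observe that~\eqref{Eq:1stOrdSent} is a universal first-order sentence---its hypothesis $\vx\vee\vy\vee\vz<\vt_1<\vt_2$ rewrites as a conjunction of lattice equations and disequations---hence it is preserved by ultraproducts (via \L{}o\'s's Theorem) and by sublattices. So both $U$ and $S$ satisfy~\eqref{Eq:1stOrdSent}.

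The main obstacle is the passage through the quotient~$\pi$, since~\eqref{Eq:1stOrdSent} is not a quasi-identity and $\mathbf{H}$ need not preserve it in general. I will argue by contradiction: suppose $L$ fails~\eqref{Eq:1stOrdSent}, witnessed by some $x,y,z,t_1,t_2\in L$. I pick preimages $x',y',z',t_1',t_2'\in S$, then replace $t_1'$ by $t_1'\vee x'\vee y'\vee z'$ and $t_2'$ by $t_1'\vee t_2'$; since $x\vee y\vee z\leq t_1\leq t_2$ holds in~$L$, these substitutions leave $\pi(t_1')=t_1$ and $\pi(t_2')=t_2$ unchanged while securing $x'\vee y'\vee z'\leq t_1'\leq t_2'$ in~$S$. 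Any collapse $x'\vee y'\vee z'=t_1'$ or $t_1'=t_2'$ in~$S$ would force the same equality in~$L$ via~$\pi$, contradicting strictness there; likewise, if $(x'\vee y')\wedge z'=(x'\wedge z')\vee(y'\wedge z')$ held in~$S$, applying~$\pi$ would give the distributive equation in~$L$, again a contradiction. Hence~$S$ witnesses a failure of~\eqref{Eq:1stOrdSent}, contradicting the previous paragraph.
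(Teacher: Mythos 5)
Your proposal is correct and follows essentially the same route as the paper: verify \eqref{Eq:1stOrdSent} in each $\rK(D)$ via Lemma~\ref{L:NonDistrId} and the fact that $(1,1,1,0)$ and $(0,1,1,1)$ have only $(1,1,1,1)$ strictly above them, transfer it to sublattices of ultraproducts of~$\cK$ by \L{}o\'s's Theorem, invoke J\'onsson's Lemma, and push through the quotient by adjusting the preimages of $t_1$ and $t_2$ so that the strict chain survives upstairs. The only cosmetic difference is that you phrase the last step as a contradiction while the paper argues directly.
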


\begin{proof}
  As both $(1,1,1,0)$ and $(0,1,1,1)$ are lower covers of $(1,1,1,1)$
  in~$\rK(D)$, it follows from Lemma~\ref{L:NonDistrId} that every
  member of~$\cK$ satisfies~\eqref{Eq:1stOrdSent}. By \L os'
  Ultraproduct Theorem, every ultraproduct of members of~$\cK$ also
  satisfies~\eqref{Eq:1stOrdSent}, and thus every sublattice of such
  an ultraproduct satisfies~\eqref{Eq:1stOrdSent}. By J\'onsson's
  Lemma, for every subdirectly irreducible member~$L$ of~$\cV$, there
  are a sublattice~$\ol{L}$ of an ultraproduct of members of~$\cK$ and
  a surjective lattice homomorphism $f\colon\ol{L}\twoheadrightarrow
  L$. Now let $x,y,z,t_1,t_2\in L$ such that $x\vee y\vee z<t_1<t_2$,
  with respective preimages
  $\ol{x},\ol{y},\ol{z},\ol{t}_1,\ol{t}_2\in\ol{L}$ under~$f$.
  Necessarily,
  \[
  \ol{x}\vee\ol{y}\vee\ol{z}<\ol{x}\vee\ol{y}\vee\ol{z}\vee\ol{t}_1<
  \ol{x}\vee\ol{y}\vee\ol{z}\vee\ol{t}_1\vee\ol{t}_2\,,
  \]
  thus, as~$\ol{L}$
  satisfies~\eqref{Eq:1stOrdSent},
  $(\ol{x}\vee\ol{y})\wedge\ol{z}=(\ol{x}\wedge\ol{z})\vee(\ol{y}\wedge\ol{z})$.
  By applying the homomorphism~$f$, we obtain that $(x\vee y)\wedge
  z=(x\wedge y)\vee(x\wedge z)$.
\end{proof}

\begin{corollary}\label{C:LocSIDistr}
Let~$D$ be a nontrivial \bdl\ and let $L$ be a subdirectly irreducible member of~$\cV$ containing~$\rK(D)$ as a sublattice. Then $L\dnw q_1$ is distributive.
\end{corollary}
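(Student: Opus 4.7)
The plan is to exhibit, for any $x,y,z\in L$ with $x,y,z\leq q_1$, two elements $t_1<t_2$ in $L$ strictly above $x\vee y\vee z$, so that the first-order sentence~\eqref{Eq:1stOrdSent} from Corollary~\ref{C:NonDistrIdSI} applies at $(x,y,z,t_1,t_2)$ and yields $(x\vee y)\wedge z=(x\wedge z)\vee(y\wedge z)$.

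First I would produce a strict chain of length two above~$q_1$ inside $\rK(D)$ itself. Using~\eqref{Eq:veec} and the closure formula in Lemma~\ref{L:gammclos}, a direct verification shows that $q_1\vee q_2=(0,1,1,0)$ already lies in $\rK(D)$, and a second application of the formula gives $q_1\vee q_2\vee q_3=(0,1,1,1)\in\rK(D)$; the two elements are visibly distinct and both strictly above~$q_1$. Since $\rK(D)$ is a sublattice of~$L$, the strict chain $q_1<q_1\vee q_2<q_1\vee q_2\vee q_3$ persists in~$L$.

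Setting $t_1:=q_1\vee q_2$ and $t_2:=q_1\vee q_2\vee q_3$, the hypothesis $x,y,z\leq q_1$ forces $x\vee y\vee z\leq q_1<t_1<t_2$. Since $L$ is subdirectly irreducible in~$\cV$, Corollary~\ref{C:NonDistrIdSI} ensures that $L$ satisfies~\eqref{Eq:1stOrdSent}, and applying this sentence to $(x,y,z,t_1,t_2)$ yields the distributive identity on $\set{x,y,z}$. As this holds for arbitrary $x,y,z\leq q_1$, the sublattice $L\dnw q_1$ is distributive. No real obstacle is foreseen: the whole argument rests on the sentence~\eqref{Eq:1stOrdSent} already derived in Corollary~\ref{C:NonDistrIdSI}, and the only routine computation is the evaluation of $q_1\vee q_2$ and $q_1\vee q_2\vee q_3$ via the $\gamma$-operator of Lemma~\ref{L:gammclos}.
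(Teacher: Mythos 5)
Your proposal is correct and is essentially identical to the paper's own proof: both exhibit the chain $q_1<q_1\vee q_2<q_1\vee q_2\vee q_3$ inside $\rK(D)$ (hence in $L$) and invoke Corollary~\ref{C:NonDistrIdSI} with $t_1:=q_1\vee q_2$ and $t_2:=q_1\vee q_2\vee q_3$. The only difference is that you spell out the verification that $(0,1,1,0)$ and $(0,1,1,1)$ lie in $\rK(D)$, which the paper leaves implicit.
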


\begin{proof}
This follows immediately from the inequalities $q_1<q_1\vee q_2<q_1\vee q_2\vee q_3$ (in~$\rK(D)$, thus in~$L$) together with Corollary~\ref{C:NonDistrIdSI}.
\end{proof}

\begin{lemma}\label{L:2/3distr}
Let~$B$ be a nontrivial \bl\ and let~$L$ be a lattice in~$\cV$ containing~$\rK(B)$ as a sublattice. Then $(x\vee y)\wedge z=(x\wedge z)\vee(y\wedge z)$ holds for all $x,y\in\rK(B)\dnw q_1$ and all $z\in L$.
\end{lemma}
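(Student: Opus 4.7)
My plan is to reduce to subdirect factors and invoke Corollary~\ref{C:LocSIDistr}. Since $L\in\cV$, Birkhoff's subdirect decomposition theorem embeds~$L$ subdirectly into a product $\prod_{i\in I}L_i$ of subdirectly irreducible members of~$\cV$. It therefore suffices to verify the desired identity after applying each canonical projection $\pi\colon L\to L_i$, so I would fix one such~$\pi$ and split into cases according to whether the restriction $\pi\res\rK(B)$ is injective.

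If $\pi\res\rK(B)$ is \emph{not} injective, I would appeal to Lemma~\ref{L:K(B)SI}(ii): the kernel of $\pi\res\rK(B)$ must contain the minimal nonzero congruence $\con_{\rK(B)}(0,q_1)$, which forces $\pi(q_1)=\pi(0)$. Since $x,y\leq q_1$ in $\rK(B)$, this collapses $\pi(x)$, $\pi(y)$, $\pi(q_1)$, and $\pi(0)$ to a single value, whence both sides of the putative distributive identity reduce to $\pi(x)\wedge\pi(z)$.

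In the injective case, $L_i$ is a subdirectly irreducible member of~$\cV$ containing the isomorphic copy $\pi(\rK(B))\cong\rK(B)$ as a sublattice, so Corollary~\ref{C:LocSIDistr} applied with $D:=B$ tells me that $L_i\dnw\pi(q_1)$ is distributive (and, as a principal downset in a lattice, a sublattice of~$L_i$). Since $\pi(x),\pi(y)\leq\pi(q_1)$, both $\pi(x)\vee\pi(y)$ and $\pi(q_1)\wedge\pi(z)$ lie inside this distributive sublattice, and I would then compute
\begin{align*}
(\pi(x)\vee\pi(y))\wedge\pi(z)
&=(\pi(x)\vee\pi(y))\wedge\pi(q_1)\wedge\pi(z)\\
&=(\pi(x)\wedge\pi(z))\vee(\pi(y)\wedge\pi(z)),
\end{align*}
the last step being the distributive law inside $L_i\dnw\pi(q_1)$, together with $\pi(x)\wedge\pi(q_1)=\pi(x)$ and $\pi(y)\wedge\pi(q_1)=\pi(y)$.

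The one point requiring attention, and hence the main obstacle, is that Corollary~\ref{C:LocSIDistr} demands an actually embedded copy of $\rK(D)$ inside the subdirectly irreducible factor; this is why the non-injective case must be handled separately. Lemma~\ref{L:K(B)SI}(ii), which pins down the unique minimal nonzero congruence of $\rK(B)$, is precisely what makes this separate case trivial.
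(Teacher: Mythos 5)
Your proof is correct and follows essentially the same route as the paper's: Birkhoff subdirect decomposition, a case split on whether the projection restricted to $\rK(B)$ is one-to-one, Corollary~\ref{C:LocSIDistr} in the injective case and Lemma~\ref{L:K(B)SI} in the non-injective one. The only cosmetic difference is that the paper replaces $z$ by $z\wedge q_1$ at the outset, whereas you perform that reduction inline inside the distributive ideal $L_i\dnw\pi(q_1)$.
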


\begin{proof}
By replacing~$z$ by $z\wedge q_1$, we may assume that~$z\leq q_1$. It follows from Birkhoff's Subdirect Representation Theorem that~$L$ is a subdirect product of subdirectly irreducible lattices. Hence it suffices to prove that the relation
 \begin{equation}\label{Eq:DistrModpsi}
 (x\vee y)\wedge z\equiv_\psi(x\wedge z)\vee(y\wedge z)
 \end{equation}
holds for each completely \mirr\ congruence~$\psi$ of~$L$. Set $L_\psi:=L/{\psi}$ and denote by $p_\psi\colon L\twoheadrightarrow L_\psi$ the canonical projection. We separate cases.
\begin{itemize}
\item[\textbf{Case 1}.] $p_\psi\res_{\rK(B)}$ is one-to-one. It follows from Corollary~\ref{C:LocSIDistr} that $L_\psi\dnw(q_1/{\psi})$ is distributive. The relation~\eqref{Eq:DistrModpsi} follows.

\item[\textbf{Case 2}.] $p_\psi\res_{\rK(B)}$ is not one-to-one. It follows from Lemma~\ref{L:K(B)SI} that $q_1\equiv_{\psi}0_{\rK(B)}$, thus, as $x,y\in\rK(B)\dnw q_1$, we get $x\equiv_{\psi}y\equiv_{\psi}0_{\rK(B)}$. The relation~\eqref{Eq:DistrModpsi} follows trivially.
\end{itemize}
This concludes the proof.
\end{proof}

\begin{theorem}\label{T:NoVarEmb}
There is no algebraic spatial lattice~$L$ in~$\cV$ that contains~$\rK(B_0)$ as a sublattice.
\end{theorem}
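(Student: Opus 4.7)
The approach is a proof by contradiction: suppose $L$ is an algebraic and spatial lattice in~$\cV$ containing~$\rK(B_0)$ as a sublattice. The strategy is to focus on the element $q_1\in\rK(B_0)\subseteq L$ and organize the analysis around an ultrafilter classification of the points of~$L$ lying below~$q_1$, obtained from the 2/3-distributive law of Lemma~\ref{L:2/3distr}, and then to combine this with compactness (since $L$ is algebraic) and upper continuity to produce a contradiction.

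First I would show that for each point~$p$ of~$L$ with $0<p\leq q_1$, the set
\[
U_p:=\setm{x\in B_0}{p\leq xq_1}
\]
is an ultrafilter on the Boolean lattice~$B_0$. Indeed, given $x\in B_0$ with Boolean complement $x^c$, both $xq_1$ and $x^cq_1$ belong to $\rK(B_0)\dnw q_1$ and have join~$q_1$; applying Lemma~\ref{L:2/3distr} with these and $z:=p$ yields $p=(xq_1\wedge p)\vee(x^cq_1\wedge p)$, and the complete \jirry\ of~$p$ forces $p\leq xq_1$ or $p\leq x^cq_1$. Since~$B_0$ consists of the finite-or-cofinite subsets of~$\omega$, its ultrafilters are exactly the principal ones $U_n:=\setm{x}{n\in x}$ for $n<\omega$ and the cofinite ultrafilter $C:=\setm{x\in B_0}{x\text{ cofinite}}$. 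Call~$p$ of Case~A if $U_p=U_n$ for some (necessarily unique)~$n$, equivalently $p\leq\{n\}q_1$, and Case~B if $U_p=C$, equivalently $p\leq\{n\}^cq_1$ for every~$n$.

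Next, since~$L$ is algebraic, every point is compact by Proposition~\ref{P:BasicCompGen}. Set $\alpha:=\bigvee_{n<\omega}\{n\}q_1=\JJd\famm{Fq_1}{F\subseteq\omega\text{ finite}}$ in~$L$. By compactness, any point $p\leq\alpha$ satisfies $p\leq Fq_1$ for some finite~$F$; iterating Lemma~\ref{L:2/3distr} and using complete \jirry\ then places $p$ below some $\{n\}q_1$, so points~$\leq\alpha$ are exactly the Case~A points. For a Case~B point~$p$, Lemma~\ref{L:2/3distr} gives $p\wedge Fq_1=0$ for each finite~$F$ (each $\{n\}q_1\wedge p=0$, and finitely many of these sum to zero), whence upper continuity yields $p\wedge\alpha=\JJd(p\wedge Fq_1)=0$. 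By spatiality of~$L$, we obtain the decomposition $q_1=\alpha\vee\gamma$, where~$\gamma$ denotes the join of all Case~B points, and $\alpha\wedge\gamma=0$.

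The hard part, where I expect the main obstacle, is to extract a contradiction from this decomposition. The plan is to show that both alternatives are impossible. If $\gamma=0$, then $q_1=\alpha=\JJd Fq_1$, and I would use Lemma~\ref{L:2/3distr} applied to hybrid elements outside $\rK(B_0)\dnw q_1$ (typically $z$ of the form $p\vee q_2$, $\alpha\vee q_0$, or similar) together with the identities of~$\cV$ inherited from the models $\rK(B)$ to force a relation that fails in~$L$. If $\gamma>0$, then picking a Case~B point~$p\leq\gamma$ supplies a compact element of~$L$ which is below $\{n\}^cq_1$ for every~$n$ but orthogonal to every~$\{n\}q_1$; the task is to exhibit an identity valid in every $\rK(B)$, hence in~$\cV$, that the quintuple $(q_0,q_2,q_3,p,q_1)$ would then violate, playing off the chain $q_0\vee q_2\vee p<q_1\vee q_2<1$ against the $\rK(B)$-version of Corollary~\ref{C:NonDistrIdSI}. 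The delicate point is that Lemma~\ref{L:2/3distr} requires its two distinguished arguments to come from $\rK(B_0)\dnw q_1$, so the desired identity must be carefully assembled from legitimate applications of that lemma rather than being asserted directly.
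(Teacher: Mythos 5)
Your opening move---using Lemma~\ref{L:2/3distr} together with the \jirry\ of points to show that every point $p\leq q_1$ satisfies $p\leq xq_1$ or $p\leq x^cq_1$ for each $x\in B_0$---is exactly the mechanism the paper's proof runs on, and that part is sound. But the proposal stops where the proof needs to start: both of your concluding alternatives end with ``I would use \dots to force a relation that fails'' and ``the task is to exhibit an identity \dots,'' and no such identity or relation is produced. This is a genuine gap, not a presentational one, because no contradiction can be extracted from the structure below $q_1$ alone: the powerset of $\omega$ is an algebraic and spatial (indeed bi-algebraic, distributive, hence in~$\cV$) lattice containing $B_0$ as a sublattice, and in it your decomposition is realized consistently with $\gamma=0$ and $\alpha=q_1$. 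So neither branch of your dichotomy is refutable from the data you have assembled; the obstruction lives entirely in the interaction of $q_1$ with $q_2$ and $q_3$, which your argument only gestures at. (A further caution: Corollary~\ref{C:NonDistrIdSI} applies only to \emph{subdirectly irreducible} members of~$\cV$, so it cannot be invoked for $L$ directly; one must pass through a subdirect decomposition, as in the proof of Lemma~\ref{L:2/3distr}.)

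The missing idea is to aim the compactness--spatiality argument at a point below $q_2$ rather than at $q_1$ itself. Writing $K:=\rK(B_0)$, $p_n:=\set{n}q_1$ and $u_n:=(\omega\setminus\set{n})q_1$: since $q_2\nleq 0_K$ and $L$ is spatial, there is a point $x\leq q_2$ with $x\nleq 0_K$, and $x$ is compact by Proposition~\ref{P:BasicCompGen}. From $x\leq q_2\leq q_1\vee q_3$, spatiality and compactness yield points $y_1,\dots,y_m\leq q_1$ with $x\leq y_1\vee\cdots\vee y_m\vee q_3$. Your Case~A/Case~B dichotomy now finishes the job: the $y_i$ lying below some $p_n$ have their finitely many supports contained in $\set{0,\dots,\ell-1}$ for a suitable $\ell$, hence lie below $u_\ell$, while the remaining $y_i$ are forced below $u_\ell$ by the splitting $y_i=(y_i\wedge p_\ell)\vee(y_i\wedge u_\ell)$ and \jirry. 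Therefore $x\leq u_\ell\vee q_3=(0,\omega\setminus\set{\ell},0,1)$, and meeting with $q_2=(0,0,1,0)$ inside the sublattice $K$ gives $x\leq 0_K$, the desired contradiction.
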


\begin{proof}
  Set $K:=\rK(B_0)$, $p_n:=(0,\set{n},0,0)$, and
  $u_n:=(0,\omega\setminus\set{n},0,0)$ for each $n<\omega$. Observe
  that~$p_n$ and~$u_n$ both belong to~$K$ and
  $p_n\vee u_n=q_1$ in~$K$. As~$L$ is spatial, there exists a
  point~$x$ of~$L$ such that $x\leq q_2$ and $x\nleq 0_K$. As $x\leq
  q_2\leq q_1\vee q_3$ and~$x$ is compact while~$L$ is spatial, there
  are a positive integer~$m$ and points~$y_1,\dots,y_m\leq q_1$ of~$L$
  such that
  \begin{equation}\label{Eq:Ineqonx}
    x\leq y_1\vee\cdots\vee y_m\vee q_3\,.
  \end{equation}
  Set $I:=\setm{i\in\set{1,\dots,m}}{(\exists n<\omega)(y_i\leq
    p_n)}$. We may assume that $I=\set{1,\dots,r}$ for some
  $r\in\set{0,\dots,m}$. There exists~$\ell<\omega$ such that
  $y_1\vee\cdots\vee y_r\leq p_0\vee\cdots\vee p_{\ell-1}$; it follows
  that
  \begin{equation}\label{Eq:Ineqwithell}
    y_1\vee\cdots\vee y_r\leq u_\ell\,.
  \end{equation}
  As $y_i\leq q_1=p_{\ell}\vee u_{\ell}$, for each
  $i\in\set{1,\dots,m}$, it follows from Lemma~\ref{L:2/3distr} that
  $y_i=(y_i\wedge p_{\ell})\vee(y_i\wedge u_{\ell})$, and thus, by the
  \jirry\ of~$y_i$, either $y_i\leq p_{\ell}$ or $y_i\leq u_{\ell}$.
  It follows that $y_i\leq u_{\ell}$ for each $i>r$, and so
 \begin{equation}\label{Eq:othersyi}
 y_{r+1}\vee\cdots\vee y_m\leq u_\ell\,.
 \end{equation}
By putting~\eqref{Eq:Ineqwithell} and~\eqref{Eq:othersyi} together, we obtain the inequality
 \begin{equation}\label{Eq:allyi}
 y_1\vee\cdots\vee y_m\leq u_\ell\,.
 \end{equation}
By putting~\eqref{Eq:Ineqonx} and~\eqref{Eq:allyi} together, we obtain
 \[
 x\leq(0,\omega\setminus\set{\ell},0,1)\,.
 \]
As $x\leq q_2=(0,0,1,0)$, it follows that $x\leq 0_K$, \contr.
\end{proof}

\section{Join-semidistributivity of the lattices $\rK(D)$}\label{S:JsdyK(D)}

In this section we shall prove that all lattices~$\rK(D)$ are \jsd. In fact we shall prove a stronger statement, namely that all lattices~$\rK(D)$ satisfy the lattice identity (inclusion) \SDjn{3}. The optimality of the ``$3$'' superscript is a consequence of the following easy statement.

\begin{proposition}\label{P:notSD+2}
The lattice $\rK(B)$ does not satisfy \SDjn{2}, for any Boolean lattice~$B$ with more than two elements.
\end{proposition}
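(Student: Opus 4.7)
The plan is to exhibit explicit elements $\vx,\vy,\vz\in\rK(B)$ witnessing the failure of the inequality $\xp_2(\vx,\vy,\vz)\leq \vx\vee(\vy\wedge\vz)$. Using the hypothesis $|B|>2$, I fix some $a\in B$ with $0<a<1$ and take
\[
\vx:=q_0=(1,0,0,0),\qquad \vy:=aq_1\vee q_3=(0,a,0,1),\qquad \vz:=q_2=(0,0,1,0).
\]

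First I will verify that these three tuples belong to $\rK(B)$. This is automatic for $\vx$ and $\vz$; for $\vy=(0,a,0,1)$ the only potentially nontrivial instance of the $\rK$-condition is the one for $(i,j,k)=(1,2,3)$, which reduces to $a^-\wedge 1\leq 0$, and this holds because $a<1$ forces $a^-=0$.

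Since meet in $\rK(B)$ is computed componentwise (Corollary~\ref{C:gammclos}), one sees $\vy\wedge\vz=0$, and hence the right-hand side satisfies $\vx\vee(\vy\wedge\vz)=q_0$. For the left-hand side I unfold $\xp_2(\vx,\vy,\vz)=\vy\wedge(\vx\vee(\vz\wedge(\vx\vee\vy)))$ from the inside outward, computing each join in $\rK(B)$ as the componentwise join in $\rL(B)$ followed by the closure operator $\gamma$ of Lemma~\ref{L:gammclos}. The componentwise join $\vx\vee\vy$ is $(1,a,0,1)$, whose $\gamma$-closure is the top $(1,1,1,1)$ (the $(0,1,2)$- and $(0,2,3)$-conditions each force a coordinate up to $1$); hence $\vz\wedge(\vx\vee\vy)=q_2$. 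Next, the componentwise $\vx\vee q_2=(1,0,1,0)$ $\gamma$-closes to $(1,1,1,0)$, since the $(0,1,2)$-condition pushes the middle coordinate to $1$. Finally, the outer meet yields $\vy\wedge(1,1,1,0)=(0,a,0,0)=aq_1$.

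To conclude, $\xp_2(\vx,\vy,\vz)=aq_1$ is not below $\vx\vee(\vy\wedge\vz)=q_0$, since this inclusion would require $a\leq 0$ in $B$, contradicting $a>0$. The only delicate piece of the argument is bookkeeping the two $\gamma$-closures correctly; the essential phenomenon to highlight is that the simultaneous support of $\vy$ at coordinates $1$ and $3$ forces $\vx\vee\vy$ to collapse to the top of $\rK(B)$, which lets $\vz=q_2$ pass through the inner meet and ultimately survive (via the second $\gamma$-closure) to produce the nonzero value $a$ at coordinate~$1$ of the left-hand side, whereas the right-hand side sees only $\vy\wedge\vz=0$ and no such amplification occurs.
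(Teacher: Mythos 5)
Your proof is correct and follows essentially the same route as the paper: exhibit an explicit triple in $\rK(B)$ and compute $\xp_2$ directly via the componentwise meet and the closure operator $\gamma$, obtaining a value at coordinate~$1$ that is not below the right-hand side. The only difference is the choice of witnesses — the paper uses a complementary pair $a,b$ and gets $\xp_2(x,y,z)=(0,b,0,0)\nleq(1,a,0,0)$, whereas your triple uses a single $a$ with $0<a<1$ (and so, incidentally, works verbatim for any bounded distributive lattice with more than two elements, not just Boolean ones).
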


\begin{proof}
There are nonzero complementary elements $a,b\in B$. We set
 \begin{align*}
 x&:=(1,a,0,0)\,,\\
 y&:=(0,b,0,1)\,,\\
 z&:=(0,a,1,0)\,. 
 \end{align*}
It is easy to compute that $\xp_2(x,y,z)=(0,b,0,0)$ while $x\vee(y\wedge z)=(1,a,0,0)$, so $\xp_2(x,y,z)\nleq x\vee(y\wedge z)$.
\end{proof}

\begin{proposition}\label{P:K(D)SD+3}
The lattice $\rK(D)$ satisfies \SDjn{3}, for any nontrivial \bdl~$D$.
\end{proposition}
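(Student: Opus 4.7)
The plan is to prove \SDjn{3} for $\rK(D)$ by a case analysis on $d:=x\vee y\vee z$, using Lemma~\ref{L:NonDistrId} to dispatch the generic case and handling the three exceptional values of $d$ by direct computation. Since $\xp_3(x,y,z)\leq y\leq d$ and $x\vee(y\wedge z)\leq d$, both sides of the desired inequality lie in the principal ideal $\rK(D)\dnw d$, which is a sublattice of $\rK(D)$. By Lemma~\ref{L:NonDistrId}, this sublattice is distributive unless $d\in\set{(1,1,1,0),(0,1,1,1),(1,1,1,1)}$, and when it is distributive, \SDjn{1}, and a fortiori \SDjn{3}, holds trivially. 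The assignment $(a_0,a_1,a_2,a_3)\mapsto(a_3,a_2,a_1,a_0)$ is a lattice automorphism of $\rK(D)$, since the defining constraint $u_i^-\wedge u_k^-\leq u_j$ for $i<j<k$ is invariant under reversing the indices; this automorphism exchanges $(1,1,1,0)$ and $(0,1,1,1)$, so only two exceptional cases remain.

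In the case $d=(1,1,1,0)$ we have $x_3=y_3=z_3=0$, so $x,y,z$ belong to the sublattice $\rK(D)\dnw(1,1,1,0)$. In this sublattice the closure operation at coordinate 2 never fires, and the closure at coordinate 1 reduces to $(u\vee v)_1=u_1\vee v_1\vee(u_0\wedge v_2)\vee(v_0\wedge u_2)$. Writing $x=(a,b,c,0)$, $y=(\alpha,\beta,\gamma,0)$, $z=(a',b',c',0)$, a direct computation of $\xp_2(x,y,z)$ using distributivity of $D$, distributivity of $\two$ at the outer coordinates, and the absorption identity
\[
\beta\wedge b'\wedge\bigl(b\vee\beta\vee(a\wedge\gamma)\vee(\alpha\wedge c)\bigr)=\beta\wedge b'
\]
will show that this sublattice satisfies \SDjn{2}, and hence \SDjn{3}.

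In the remaining case $d=(1,1,1,1)=1$, we verify the inequality coordinate by coordinate. The projections $\pi_0,\pi_3\colon\rK(D)\to\two$ are lattice homomorphisms, because $\gamma$ leaves coordinates $0$ and $3$ unchanged, so \SDjn{3} holds at these coordinates by distributivity of $\two$. At coordinates $1$ and $2$ we set $w:=y\wedge(x\vee z)$ and $v:=z\wedge(x\vee w)$, so that $\xp_3(x,y,z)=y\wedge(x\vee v)$; we then expand $(x\vee v)_1$ and $(x\vee v)_2$ via the closure formula, distribute the meet with $y_i$ through each summand using the distributivity of $D$ or $\two$, and bound each resulting term by the corresponding coordinate of $x\vee(y\wedge z)$, invoking the defining constraints $u_0\wedge(u_2\vee u_3)\leq u_1$, $u_0\wedge u_3\leq u_2$, and $u_1^-\wedge u_3\leq u_2$ for $u\in\rK(D)$ whenever a subterm of the form $u_0\wedge u_k$ or $u_1^-\wedge u_3$ arises.

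The main technical obstacle will be coordinate 2, where the nonlinear operator $u_1\mapsto u_1^-$ can cause the closure contribution $(x_1\vee v_1)^-\wedge(x_3\vee v_3)$ to strictly exceed $(x_1^-\vee v_1^-)\wedge(x_3\vee v_3)$; this is precisely the mechanism underlying the failure of \SDjn{2} exhibited in Proposition~\ref{P:notSD+2}. Three iterations of the recursion will nonetheless suffice because, by the distributivity of $\two$, one derives the tight bound $v_3=(x_3\vee y_3)\wedge z_3\leq x_3\vee(y_3\wedge z_3)$, which combined with the analogous closure term already present in the expansion of $x\vee(y\wedge z)$ will absorb the problematic contributions.
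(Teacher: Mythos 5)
Your opening reduction is sound and is in the same spirit as the paper's: since $\rK(D)\dnw d$ with $d:=x\vee y\vee z$ is a sublattice containing $x,y,z$, Lemma~\ref{L:NonDistrId} does dispatch every case except $d\in\set{(1,1,1,0),(0,1,1,1),(1,1,1,1)}$. But the rest of the argument has two genuine gaps. First, the map $(a_0,a_1,a_2,a_3)\mapsto(a_3,a_2,a_1,a_0)$ is \emph{not} an automorphism of $\rK(D)$ --- it is not even a well-defined map on $\rL(D)=\two\times D\times\two\times\two$, because coordinate~$1$ lives in~$D$ while coordinate~$2$ lives in~$\two$ (only for $D=\two$ is the product reversal-invariant). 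The two cases it was meant to identify are in fact asymmetric in a relevant way: below $(1,1,1,0)$ the only closure contribution is $t_0^-\wedge t_2^-$ at coordinate~$1$, a meet of $\two$-coordinates, whereas below $(0,1,1,1)$ the closure contributes $t_1^-\wedge t_3^-$ at coordinate~$2$, which involves the non-join-preserving operation $b\mapsto b^-$ on~$D$. So the case $d=(0,1,1,1)$ requires its own computation (it does work out, using distributivity of~$D$ to get $b_z\leq(b_z\wedge b_x)\vee(b_z\wedge b_y)$, but you have not supplied it and cannot inherit it by symmetry).

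Second, and more seriously, the case $d=(1,1,1,1)$ is the only one in which \SDjn{2} actually fails --- the witnesses of Proposition~\ref{P:notSD+2} join to $(1,1,1,1)$ --- so it is the only case where the proposition has real content, and there your text stops being a proof: you name the ``main technical obstacle'' at coordinate~$2$ and assert that ``three iterations of the recursion will nonetheless suffice.'' That assertion \emph{is} the proposition; the bound $v_3\leq x_3\vee(y_3\wedge z_3)$ you offer is just distributivity of~$\two$ and does not by itself control the term $(x_1\vee v_1)^-\wedge(x_3\vee v_3)$ created by the closure. Compare with the paper's route: after normalizing to $y\wedge z\leq x$, $z\leq x\vee y$, and $x,y,z$ pairwise incomparable, it applies Lemma~\ref{L:NonDistrId} to $x\vee y$ and $x\vee z$ \emph{separately} (not to $x\vee y\vee z$), which forces explicit coordinate patterns for $x$, $y$, $z$; only then does a finite computation close the argument. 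Your coarser case split on $d$ leaves all of that work undone at $d=(1,1,1,1)$. As it stands the proposal is a plausible plan with one false step and its central case unproved.
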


\begin{proof}
  We must prove that $\xp_3(x,z,y)\leq x\vee(y\wedge z)$ for all
  $x,y,z\in\rK(D)$. By replacing~$x$ by $x\vee(y\wedge z)$, then~$z$
  by $z\wedge(x\vee y)$, we may assume without loss of generality that
  \begin{equation}\label{Eq:yzleqxzleqx+y}
    y\wedge z\leq x\quad\text{and}\quad z\leq x\vee y\,.
  \end{equation}
  Elementary computations show that if two
  of the elements~$x$, $y$, $z$ are comparable, then $\xp_2(x,y,z)\leq
  x$, thus $\xp_3(x,z,y)\leq x$. Hence we shall assume from now on that
  \begin{equation}\label{Eq:antichain}
    x\,,\ y\,,\ z\ \text{ are pairwise incomparable}\,. 
  \end{equation}
  If $\rK(D)\dnw(x\vee y)$ is distributive, then, as $z\leq x\vee y$,
  we get that $z=(z\wedge x)\vee(z\vee y)\leq x$,
  contradicting~\eqref{Eq:antichain}. If $\rK(D)\dnw(x\vee z)$ is
  distributive, then, setting $y':=\xp_2(x,y,z)=y\wedge(x\vee z)$, we
  get that $y'=(y'\wedge x)\vee(y'\wedge z)\leq x$, and thus
  $\xp_3(x,z,y)\leq x$. 
  Hence we may assume that neither
  $\rK(D)\dnw(x\vee y)$ nor $\rK(D)\dnw(x\vee z)$ is distributive. By
  Lemma~\ref{L:NonDistrId}, this implies that
 \begin{equation}\label{Eq:xyxzlarge}
 \set{x\vee y,x\vee z}\subseteq\set{(1,1,1,0),(0,1,1,1),(1,1,1,1)}\,.
 \end{equation}
 Suppose first that $x_0=1$. If $x_3=1$, then, as $x\in\rK(D)$, $x=1$,
 which contradicts~\eqref{Eq:antichain}; so $x_3=0$. As $z\leq x\vee
 y$ we obtain that $z_3\leq y_3$, but $y\wedge z\leq x$ and thus
 $z_3=z_3\wedge y_3\leq x_3$, and so $z_3=0$. If $x_2=1$ then, as
 $x_0=1$ and $x<1$, we get that $x=(1,1,1,0)\geq z$, which
 contradicts~\eqref{Eq:antichain}; hence $x_2=0$. So far we have
 proved that $x=(1,x_1,0,0)$. Now from~\eqref{Eq:xyxzlarge} it follows
 that $y\nleq(1,1,0,0)$ and $z\nleq(1,1,0,0)$. If either $y_2=z_2=1$
 or $y_3=z_3=1$, then either $y_2\wedge z_2=1$ or $y_3\wedge z_3=1$,
 \contr\ as $y\wedge z\leq x$ and $x=(1,x_1,0,0)$. As $z_3\leq y_3$,
 the only remaining possibility is $y_3=z_2=1$ and $y_2=z_3=0$. As
 $y_3=1$ and $y\in\rK(D)$ it follows that $y_0=0$. Similarly, as
 $z_2=1$, $z\in\rK(D)$, and $x\nleq z$, we obtain that $z_0=0$.
 We have thus proved that
 \begin{align*}
 x&=(1,x_1,0,0)\,,\\
 y&=(0,y_1,0,1)\,,\\
 z&=(0,z_1,1,0)\,. 
 \end{align*}
It follows that $x\vee z=(1,1,1,0)$, then that $y\wedge(x\vee z)=(0,y_1,0,0)$, then that $x\vee(y\wedge(x\vee z))=(1,x_1\vee y_1,0,0)$. Therefore, $\xp_3(x,z,y)=(0,z_1\wedge(x_1\vee y_1),0,0)$. As~$D$ is distributive, $z_1\wedge x_1\leq x_1$, and $y_1\wedge z_1\leq x_1$, we obtain that $\xp_3(x,z,y)\leq x$.

The remaining case is $x_0=0$. As $z\leq x\vee y$ we get $z_0\leq
y_0$, thus $z_0=y_0\wedge z_0\leq x_0=0$, and thus $z_0=0$, and
therefore $(x\vee z)_0=0$. By~\eqref{Eq:xyxzlarge}, it follows that
$x\vee z=(0,1,1,1)$. In particular,
\begin{equation}\label{Eq:x1z1x3z31}
  x_1\vee z_1=x_3\vee z_3=1\,.
\end{equation}
If $x\vee y=(1,1,1,0)$, then $x_3=y_3=0$, thus (as $z\leq x\vee y$)
$z_3=0$, thus, by~\eqref{Eq:x1z1x3z31}, $x_3=1$, contradicting  $x\vee y=(1,1,1,0)$. If $x\vee y=(0,1,1,1)$, then $x_1\vee y_1=x_3\vee y_3=1$, thus,
by~\eqref{Eq:x1z1x3z31} and as $y\wedge z\leq x$, we get
\[
x_1=x_1\vee(y_1\wedge z_1)=(x_1\vee y_1)\wedge(x_1\vee z_1)=1\,,
\]
and similarly, $x_3=1$, so $x\geq(0,1,1,1)\geq z$, contradicting~\eqref{Eq:antichain}.

The only remaining possibility is $x_0=0$ and $x\vee y=(1,1,1,1)$. {}From $x_0=0$ it follows that $y_0=1$. If $y_3=1$ then $y=1$, which contradicts~\eqref{Eq:antichain}; so $y_3=0$. As $x\vee y=(1,1,1,1)$, we get $x_3=1$. If $y_2=1$, then, as $y_0=1$ and $y_3=0$, we get $y=(1,1,1,0)$; as $y\wedge z\leq x$ and $x_3=1$, we get $z\leq x$, which contradicts~\eqref{Eq:antichain}; so $y_2=0$. So far we have obtained that
 \begin{align*}
 x&=(0,x_1,x_2,1)\,,\\
 y&=(1,y_1,0,0)\,,\\
 z&=(0,z_1,z_2,z_3)\,. 
 \end{align*}
It follows that $x\vee z$ has the form $(0,x_1\vee z_1,*,*)$, thus
 \[
 y\wedge(x\vee z)=(0,y_1\wedge(x_1\vee z_1),0,0)
 \]
with $y_1\wedge(x_1\vee z_1)=(y_1\wedge x_1)\vee(y_1\wedge z_1)\leq x_1$, so $\xp_2(x,y,z)=y\wedge(x\vee z)\leq x$, and so $\xp_3(x,z,y)\leq x$. 
\end{proof}

As \SDjn{3} is a lattice-theoretical identity implying \jsdy, we obtain the following corollary.

\begin{corollary}\label{C:K(D)SD+3}
The class $\setm{\rK(D)}{D\text{ bounded distributive lattice}}$ generates a variety of \jsd\ lattices.
\end{corollary}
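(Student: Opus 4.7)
The plan is to combine Proposition~\ref{P:K(D)SD+3} with the general fact (noted in Section~\ref{S:Basic}) that \SDjn{n} is a lattice identity which implies the \jsdy\ quasi-identity.

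First I would observe that \SDjn{3}, namely $\xp_3(\vx,\vy,\vz)\leq\vx\vee(\vy\wedge\vz)$, is a lattice identity in the three variables $\vx,\vy,\vz$, since the terms $\xp_n$ were defined purely from $\vee$ and $\wedge$. Consequently the class of all lattices satisfying \SDjn{3} is a variety, i.e.\ closed under the operators $\mathbf{H}$, $\mathbf{S}$, and $\mathbf{P}$.

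Next, by Proposition~\ref{P:K(D)SD+3}, every lattice of the form~$\rK(D)$ with $D$ a nontrivial \bdl\ satisfies \SDjn{3}; the trivial (one-element) case is harmless since then $\rK(D)$ is a quotient of $\two^3$, which is distributive and so \emph{a fortiori} satisfies \SDjn{3}. Hence the entire class $\setm{\rK(D)}{D\text{ bounded distributive lattice}}$ satisfies~\SDjn{3}, and therefore so does the variety~$\cV$ it generates.

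Finally, as recalled in Section~\ref{S:Basic} (following J\'onsson and Rival~\cite{JoRi79}), the identity \SDjn{3} implies the quasi-identity \jsdy. Since every lattice in~$\cV$ satisfies~\SDjn{3}, every lattice in~$\cV$ is \jsd, which is exactly the statement of the corollary. There is essentially no obstacle here: the work has been done in Proposition~\ref{P:K(D)SD+3}, and the corollary is just the observation that an identity holding on a generating class passes to the generated variety, together with the implication \SDjn{3}${}\Rightarrow{}$\jsdy.
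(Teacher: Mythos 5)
Your proposal is correct and follows exactly the paper's (one-line) argument: \SDjn{3} is an identity, so it passes from the generating class $\setm{\rK(D)}{D}$ (via Proposition~\ref{P:K(D)SD+3}) to the variety it generates, and \SDjn{3} implies \jsdy. The remark about the trivial~$D$ is a harmless extra precaution the paper does not bother with, since $\rK(D)$ is only defined for nontrivial~$D$.
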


\section{Discussion}\label{S:Discussion}

\subsection{Frink's Embedding Theorem}
This theorem (cf. Frink~\cite{Frink46}, see also Gr\"atzer \cite[Section~IV.5]{GLT2}) provides a $0,1$-lattice embedding construction of any complemented modular lattice into some geomodular (i.e., algebraic, atomistic, and modular) lattice. J\'onsson observed in~\cite{Jons54} that the lattice constructed by Frink generates the same variety as~$L$.

Let us outline Frink's construction. We start with a complemented modular lattice~$L$. Denote by~$\bL:=\Fil L$ the filter lattice of~$L$ and by~$\bP$ the set of all atoms of~$\bL$ (i.e., the elements of~$\bP$ are the maximal proper filters of~$L$). By the general properties of modular lattices (cf. Crawley and Dilworth~\cite[Section~4.1]{CrDi73}), $\bP^\vee$ is an ideal of~$\bL$; in particular, it satisfies all the identities satisfied by~$\bL$, which are the same as all the identities satisfied by~$L$. Now~$\Id\bP^\vee$ is an algebraic, atomistic, modular (that is, geomodular) lattice, belonging to the variety generated by~$L$, with set of atoms $\ol{\bP}:=\setm{\bL\dnw\fp}{\fp\in\bP}$. Then~$\ol{\bP}$, being the set of all atoms in an algebraic atomistic lattice, is a seed (cf. Proposition~\ref{P:AlgAt2StrSp}). In order to conclude the proof, Frink proves that the canonical map $\eps\colon L\hookrightarrow\Id\bP^\vee$, $x\mapsto\setm{\fp\in\bP^\vee}{x\in\fp}$ is a lattice homomorphism. The hard core of that task amounts to proving the following:

\begin{quote}\em
Let $x,y\in L\setminus\set{0}$ and let~$\fr\in\bP$ such that $x\vee y\in\fr$. Then there are $\fp,\fq\in\bP$ such that $x\in\fp$, $y\in\fq$, and $\fp\cap\fq\subseteq\fr$.
\end{quote}

Conceivably, the proof of Frink's Theorem could have been dealt with in a more expeditious manner if it had been possible to prove that~$\Fil L$ is complemented. However, as Frink observes on Frink~\cite[page~462]{Frink46}, this is not the case as a rule, for example for~$L$ a continuous geometry. Later on the same page, Frink expresses the hope that ideals and filters could be combined in the same construction. We quote the corresponding paragraph on Frink~\cite[page~462]{Frink46}:
\begin{quote}\em
To be sure, one could consider the dual concept of the lattice~$D(L)$, namely the lattice of all ideals \pup{multiplicative ideals} instead of dual ideals. Among these would be found maximal ideals, which would serve as complements of maximal dual ideals, if both kinds of ideal could be combined in a single lattice.
\end{quote}
One possible illustration of the hopelessness of the desired task is the following result (cf. Wehrung~\cite[Corollary~12.4]{DLLB}): \emph{The subspace lattice of an infinite-dimensional vector space cannot be embedded into any bi-algebraic lattice}.

Another illustration of the interplay between rings and lattices about those questions is given by Faith's example (cf. Cozzens and Faith~\cite[Example~5.14]{CoFa75}) of a von Neumann regular ring~$R$ which is not a ``left V-ring'', the latter meaning that not every left ideal is an intersection of maximal left ideals. It is then not hard to prove that if~$L$ denotes the lattice of all principal right ideals of~$R$, which is complemented modular (cf. Maeda~\cite[Section~VI.4]{Maed} or Goodearl~\cite[Theorem~2.3]{Good91}), then~$\Fil L$ is not atomistic. It follows that the set of all atoms of~$\Fil L$ is not join-dense in~$\Fil L$ (thus it is not a quasi-seed) and that not all points of~$\Fil L$ are atoms (for~$\Fil L$ is spatial).

Frink's example and Faith's example are both coordinatizable (i.e., each of them is isomorphic to the lattice of all principal right ideals of a regular ring), but they are not identical. While Frink's example is a continuous geometry, Faith starts with an infinite-dimensional right vector space~$V$ over a field~$F$, then considers the ideal~$S$ in the endomorphism ring $\End V_F$ consisting of all endomorphisms with finite rank, then defines~$R$ as the subalgebra of~$\End V_F$ generated by~$S$ and the identity.

\subsection{Word problems in various classes of lattices}
The geometric description provided for $n$-distributive lattices in Theorem~\ref{T:NatRefndistr} is the key tool for establishing the result, stated in Theorem~\ref{T:Finn-distr}, that the variety of all $n$-distributive lattices is generated by its finite members, and thus has a decidable word problem for free lattices. On the other hand, the corresponding results for \emph{modular} lattices do not hold (cf. Freese~\cite{Freese79,Freese80} and Herrmann~\cite{Herr84}). It is even observed on Freese~\cite[page~90]{Freese80} that the free lattice on five generators in the variety of all $n$-distributive modular lattices, for $n\geq4$, has an undecidable word problem. As every modular lattice embeds into some algebraic, modular, and spatial lattice (Herrmann, Pickering, and Roddy~\cite{HPR}), the result of Theorem~\ref{T:Finn-distr} for $n$-distributive lattices appears a bit as a fluke. A look at the axiomatization of the abstract projective geometries associated with algebraic spatial lattices described in Herrmann, Pickering, and Roddy~\cite[Section~3]{HPR}, in particular the so-called \emph{Triangle Axiom}, shows that the existential quantifier involved in that axiom prevents us from expressing an infinite projective space as a ``limit'' of finite projective spaces in any satisfactory way. Due to Lemma~\ref{L:Pjoinndistr}, this obstacle does not appear in the case of $n$-distributive lattices.

However, a further look at \emph{positive} decidability results obtained for other classes of modular lattices shows that this existential quantifier alone is not sufficient to prevent decidability to occur. Here is a sample of such results, the third one being of more hybrid nature due to the extra operation symbol for complementation:

\begin{itemize}
\item[---] Hutchinson and Cz\'edli \cite{HuCz78} characterize those rings~$R$ for which the word problem for free lattices in the variety generated by all subspace lattices of left $R$-modules is decidable. This class of rings includes all fields, and also the ring~$\ZZ$ of all integers as well as its quotient rings $\ZZ/m\ZZ$ for positive integers~$m$ (cf. Herrmann~\cite{Herr73}, Herrmann and Huhn~\cite{HeHu75}).

\item[---] Herrmann and Huhn~\cite{HeHu75} also prove that the word problem for free lattices in the variety generated by all complemented modular lattices is solvable.

\item[---] By using results about von~Neumann regular rings proved by Goodearl, Menal, and Moncasi~\cite{GoMM93}, Herrmann and Semenova~\cite{HeSe07} prove that the variety generated by complemented Arguesian lattices with an extra unary operation symbol for complementation is generated by its finite members, and thus that the word problem for free lattices with complementation in the variety generated by those structures is decidable. The latter decidability result extends to the variety generated by all complemented modular lattices with a unary operation symbol for complementation, although residual finiteness is replaced by residual finite length.
\end{itemize}

This suggests that in the modular world, the spatial theory alone is probably far from sufficient for settling residual finiteness and word problem matters.

\subsection{Open problems}\label{S:Pbs}

\begin{problem}\label{Pb:AlgStrSp}
Can every algebraic and spatial lattice be embedded, within its variety, into some algebraic and strongly spatial lattice?
\end{problem}

For our next problem, we shall consider the identities $\beta'_m$, given in Nation~\cite{Nation90}, that characterize, among finite lattices, those lattices without $\bD$-sequences of length $m+1$ (where~$\bD$ denotes join-dependency). Nation proves in \cite[Section~5]{Nation90} that for fixed~$m$, the variety of lattices defined by~$\beta'_m$ is locally finite.

\begin{problem}\label{Pb:betam}
Prove that every lattice satisfying~$\beta'_m$ for some~$m$ can be embedded, within its variety, into some algebraic and strongly spatial lattice.
\end{problem}

Our next problem asks for a semidistributive analogue of Theorem~\ref{T:NoVarEmb}.

\begin{problem}\label{Pb:SemidNonEmb}
Construct a semidistributive lattice that cannot be embedded, within its variety, into any algebraic spatial lattice. Can such a lattice be locally finite, or generate a variety of semidistributive lattices, or both?
\end{problem}

\begin{problem}\label{Pb:SpMod}
Let~$p$ be a compact element in an algebraic, modular, spatial lattice~$L$. Can every join-cover of~$p$ be refined to a minimal join-cover of~$p$?
\end{problem}

By Theorem~\ref{T:ModSp2StrSp}, the conclusion of Problem~\ref{Pb:SpMod} holds in case~$p$ is a point of~$L$.

\begin{problem}\label{Pb:Quasivar}
Can every modular lattice be embedded, within its \emph{quasivariety}, into some algebraic (spatial, algebraic spatial, respectively) lattice?
\end{problem}

\end{document}